\newcounter{hours}\newcounter{minutes}
\newtheorem*{rep@theorem}{\rep@title}
\newcommand{\newreptheorem}[2]{%
\newenvironment{rep#1}[1]{%
 \def\rep@title{#2 \ref{##1}}%
 \begin{rep@theorem}}%
 {\end{rep@theorem}}}
\theoremstyle{theorem}
\newtheorem{definition}{Definition}[section]
\newtheorem{theorem}{Theorem}[section]
\newtheorem{lem}[theorem]{Lemma}
\newtheorem{cor}[theorem]{Corollary}
\theoremstyle{definition}
\theoremstyle{remark}                  
\def\R{{\mathbb R}}
\def\e{\varepsilon}
\def\cl{\bar}
\def\Rn{\mathbb R^n}
\def\vp{\varphi}
\newcommand{\abs}[1]{\left| #1 \right|}
\newcommand{\set}[1]{\left\{ {#1} \right\}}
\definecolor{darkgreen}{rgb}{0,0.4,0}
\definecolor{grey}{rgb}{0.5,0.5,0.5}
\newcommand {\no}{\noindent}
\numberwithin{equation}{section}
\begin{document}
\title{Free boundary problems for Tumor growth: a viscosity solutions approach}
\author{Inwon C. Kim}
\address{Department of Mathematics\\ UCLA\\
Los Angeles, CA 90095\\
USA}
\email[I.C. Kim]{ikim@math.ucla.edu}
\thanks{Inwon Kim is supported by the NSF grant DMS-1300445}

\author{Beno\^ \i t Perthame}
\address{ Sorbonne Universit\'es \\UPMC Univ Paris 06, UMR 7598 \\ Laboratoire Jacques-Louis Lions, F-75005\\ Paris, France}
\email[B. Perthame]{benoit.perthame@upmc.fr}\address{CNRS, UMR 7598, Laboratoire Jacques-Louis Lions, F-75005, Paris, France}
\address{INRIA-Paris-Rocquencourt, EPC MAMBA, Domaine de Voluceau, BP105, 78153 Le Chesnay Cedex, France} 
\thanks{Beno\^\i t Perthame  is supported by the ANR-13-BS01-0004 funded by the French Ministry of Research.}

\author{Panagiotis E. Souganidis}
\address{Department of Mathematics \\
University of Chicago \\
Chicago, IL 60637 \\
USA }
\email[P.E. Souganidis]{souganidis@math.uchicago.edu }
\thanks{Panagiotis Souganidis is supported by the NSF grant DMS-1266383.}

\begin{abstract}
The mathematical  modeling of tumor growth leads to singular ``stiff pressure law'' limits for porous medium equations with a source term. Such asymptotic problems give rise to 
free boundaries, which, in the absence of   active motion,  are generalized Hele-Shaw flows.
%
In this note we use viscosity solutions methods to study limits for porous medium-type equations with active motion. We prove the uniform convergence of the density under fairly general assumptions on the initial data, thus improving existing results. We also obtain some additional information/regularity about the propagating interfaces, which, in view of the discontinuities, can  nucleate and, thus, change topological type. 
The main tool is the construction of local, smooth, radial solutions which serve as barriers for the existence and uniqueness results as well as  to quantify the speed of propagation of the free boundary propagation.
\end{abstract}

\maketitle

{\bf Key-words:}   Elliptic-Parabolic problems; viscosity solutions; free boundary; Tumor growth;

{\bf AMS Class. No:}  35K55; 35B25; 35D40; 76D27; 

\section{Introduction}

\no Motivated by models of tumor growth (see for instance the survey papers by Friedman \cite{Friedman_survey}, Lowengrub {\em et al} \;Ê\cite{lowengrub_survey}) and extending  Perthame, Quiros and Vazquez \cite{PQV}, in a  recent paper  Perthame, Quiros, Tang and Vauchelet \cite{PQTV} studied  the limiting behavior, as $m\to\infty$, of the solution (density) $\rho_m$ of  the porous medium diffusion equation  (pme for short)
\begin{equation}\label{pme}
\rho_{m,t} - \Delta \rho_m^m -\nu\Delta \rho_m = \rho_m G(p_m) \  \hbox{ in }  \ Q_T:= \Omega \times (0,T),
\end{equation}
where $\Omega$ is either a bounded domain in $\R^n$ or $\Omega=\R^n$, with boundary and initial conditions 
\begin{equation}\label{pmeboundary}
\rho_m= \rho_L \  \ \hbox{ in }  \  \partial_p Q_T:=\partial \Omega\times [0,T) \  \text{and} \  
\rho_m=\rho_{0,m} \ \text{on} \  \Omega \times \{0\},
\end{equation}satisfying  
\begin{equation}\label{pmeboundary2}
0 \leq \rho_L<1  \ \text{and } \  0\leq \rho_0\leq 1 \ \text{ if $\Omega$ is bounded  or }  \rho_0 \in L^1(\R^n) \ \text{ if $\Omega=\R^n$}. 
\end{equation}
Here 
\begin{equation}\label{pressure}
p_m:= \frac{m}{m-1}\rho_m^{m-1}
\end{equation}
is the pressure,  $\nu >0$, and 
$G:\R \to \R$  is a smooth function, which describes the cell multiplication,  satisfying 
\begin{equation}\label{g}
G(p_M) = 0 \  \hbox{for some } \ p_M>0 \ \ \text{and} \ \  G' <0. 
\end{equation}


\no Using the pressure variable,  \eqref{pme}  can be rewritten as 
$$
\rho_{m,t} - \text{div} ( \rho_m Dp_m -\nu D\rho_m) = \rho_m G(p_m),
$$
a form which represents better the mechanical interpretation of the model  with $v_m:= -Dp_m$ the tissue bulk velocity according to Darcy's law. 
\smallskip

\no Note that, if, as $m \to \infty$,  the $p_m$'s and $\rho_m$'s converge respectively to $p$ and $\rho$, then $p$ will be nonzero  only where $\rho=1$. This indicates, that, in the limit,  a phase transition may take place with an evolving free boundary  between the tumor region (the support of $p$) and the pre-tumor zone (the support of $1-\rho$). The convergence,  as $m\to\infty$,  of the $p_m$'s and $\rho_m$'s  has already been investigated using a distributional solution approach in \cite{PQTV}. 

\no  Here we study the asymptotic behavior  of $p_m$ and $\rho_m$  in the limit  $m\to\infty$ using  viscosity solutions. This yields a different description of the limit problem,  allows for more general initial data and yields pointwise information about the free boundary evolution, uniform convergence results  as well as some quantified statements about the speed of propagation of the tumor zone.

\no In order to state the result it is necessary to introduce the limit problem which we derive next formally following \cite{PQTV}.  We use the auxiliary variable
\begin{equation}\label{u_m}
u_m:= -\rho_m^m+\nu(1-\rho_m),
\end{equation}
which is close to $-p_m + \nu(1-\rho_m)$ for large $m$,  and, recalling that \eqref{pme} can also be written as 
\begin{equation}\label{eqn:p}
p_{m,t} - (m-1) p_m \Delta p - |D p_m|^2-\nu\Delta p_m = (m-1) p_m G(p_m) - \nu\frac{m-2}{m-1}\frac{|D p_m|^2}{p},
\end{equation}
we find
that $u_m$ satisfies
\begin{equation}\label{eqn:um}
[b_m(u_m)]_t - \nu \Delta u_m = - \nu \rho_mG(p_m) \ \  \text{with} \ \  b_m'(u_m) =\frac{\nu}{m\rho_m^{m-1}+\nu}.
\end{equation}

\no Assume next that, as $m \to \infty$, the $\rho_m$'s, $p_m$'s and $u_m$'s  converge respectively  to $\rho$, $p$ and $u$. Then, formally, we find 
\begin{equation}\label{uinfty}
p = u^- , \quad  \nu(1-\rho)= u^+  \qquad\hbox{ where } u^+:= \max(u,0)  \hbox{ and } u^-:=-\min(u,0).
\end{equation}

\no Letting $m\to\infty$ in \eqref{eqn:p} and noting that $\{p>0\} = \{\rho=1\}$ yields that $p$ and $\rho$ solve respectively
  \begin{equation}\label{elliptic}
  -\Delta p(\cdot,t) = G(p)(\cdot,t) \ \text{ in } \  \Omega(t):=\{p(\cdot,t)>0\},
  \end{equation}
and
$$
\rho_t - \nu\Delta \rho = \rho G(0)  \ \text{ in} \  \{\rho<1\};
$$  
this last equation which can be rewritten 
 in terms of $\tilde{\rho}:=\nu(1-\rho)$ as
  \begin{equation}\label{parabolic}
  \tilde{\rho}_t - \nu\Delta\tilde{\rho} = -\nu\rho G(0) = (\tilde{\rho}-\nu)G(0) \  \hbox{ in } \ \R^n-\Omega(t).
  \end{equation}

\no Combining  \eqref{uinfty}, \eqref{elliptic} and \eqref{parabolic} the limiting problem can be recast with $b(u):= u_+$ as the elliptic-parabolic equation
\begin{equation}\label{parabolic_elliptic}
b(u)_t - \nu\Delta u = (b(u)-\nu)G(u^-) \  \hbox{ in } \ Q_T.
\end{equation}

\no In view of the Lipschitz continuity of $b$, \eqref{parabolic_elliptic} yields  the free boundary condition 
\begin{equation}\label{no_jump}
  \partial_\eta u^+ = \partial_\eta u^- \hbox{ on } \partial\Omega(t), 
  \end{equation}
  where $\eta$ is the (outward) normal at $(x,t)\in\partial\Omega(t)$, which provides an implicit motion law for the free boundary $\partial\{u>0\}$. 
\smallskip
  
\no  This problem but without the growth term  has been studied by Alt-Luckhaus \cite{AL} in the weak (duality) setting, by Carrillo \cite{CAR} in the weak (entropy) setting and also by  Kim-Pozar \cite{KP} using viscosity solutions. The right hand side, however,  plays an important role because it can generate pressure nucleation, thus generating a change of topology in the free boundary (in a zone where $p=0$, $\rho$ can grow and reach $\rho =1$ thus generating a new island where $p>0$).  Viscosity solutions for porous medium equations were introduced by Caffarelli and Vazquez \cite{CafVaz}, see also the book by Vazquez \cite{vazquez_book}, and later by Kim \cite{Kim2003} for the Hele-Shaw problem, for the Stefan problem and additional references the reader can refer to the book by Caffarelli and Salsa \cite{CafSalsa}.

\smallskip

\no Next we introduce the precise assumptions on the initial data $\rho_{m,0}$. We assume that  

\begin{equation}\label{initial_intro}
\text{ there exists $M_0 >0$ such that $p_{m,0}:= \frac{m}{m-1}(\rho_{m,0})^{m-1} \leq M_0$,}
\end{equation}
and, as $m\to \infty$,  
\begin{equation}\label{initial_intro2}
\begin{cases}
\text{$\rho_{m,0} \to \rho_0$  uniformly, where $\rho_0:\Omega \to [0,1]$  is continuous and} \\[.8mm]
\text{$\Omega_0:= \{\rho_0 = 1\}$ is bounded domain with locally Lipschitz boundary.}
\end{cases}
\end{equation}
\smallskip
and, in addition, if  $\Omega=\R^n$ that 
\begin{equation}\label{ubdd}
\rho_0\in L^1(\R^n). 
\end{equation}

\no The main result of the paper is: 

\begin{theorem}\label{thm:main}
Let $\rho_m$ solve \eqref{pme} with  data satisfying  \eqref{pmeboundary2}, \eqref{initial_intro}, \eqref{initial_intro2}, and  \eqref{ubdd}, if $\Omega=\R^n$, 
and define $ 
u_0(x):= -w \chi_{\Omega_0} + \nu(1-\rho_0)\chi_{\Omega_0^c},
$
where $w$ be the unique solution of  $-\Delta w = G(w)$ in $\Omega_0 \ \text{and } \ w=0 \ \text{on} \ \partial \Omega_0$.
Then, for all   $T>0$ and as $m\to \infty$, the $\rho_m$'s  converge  to $1-\nu^{-1}b(u)$ uniformly in $Q_T$,   where $u$ is the unique viscosity solution of \eqref{parabolic_elliptic}, with the initial and boundary data $u_0$ and $\nu(1-\rho_L)$ respectively.
\end{theorem}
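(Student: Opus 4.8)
The plan is to combine the method of half-relaxed limits with a comparison principle for the limiting elliptic--parabolic free boundary problem \eqref{parabolic_elliptic}, the two being bridged by a family of explicit radial barriers; existence of the limit viscosity solution comes out as a byproduct. The first step is a priori estimates uniform in $m$. Comparing $p_m$ with spatially homogeneous super- and subsolutions of \eqref{eqn:p} built from the ODE $\dot\phi=(m-1)\phi\,G(\phi)$, and using \eqref{g}, \eqref{initial_intro} and $\rho_L<1$, one gets $0\le p_m\le P_M:=\max(M_0,p_M)$ and $0\le\rho_m\le 1+o_m(1)$ on $\overline{Q_T}$, hence $-(P_M+\nu)\le u_m\le\nu$. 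These bounds make the upper and lower half-relaxed limits $\bar u,\underline u$ of $\{u_m\}$ well defined and bounded, with $\underline u\le\bar u$; the Aronson--B\'enilan-type lower bound on $\Delta p_m$ together with the $L^1$ compactness of $\{\rho_m\},\{p_m\}$ from \cite{PQTV,PQV} give, in addition, a.e.\ convergence $\rho_m\to\rho$, $p_m\to p$, extra interface regularity, and the identification that where $\bar u=\underline u$ their common value $u$ satisfies $u^-=p$, $u^+=\nu(1-\rho)$, $\{u<0\}=\{\rho=1,\ p>0\}$, $\{u>0\}=\{\rho<1\}$.

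The crux is the construction, for each finite $m$, of local smooth radial-in-$x$ super- and subsolutions of \eqref{pme} --- expanding or contracting balls and annuli whose pressure profile $P_m(|x|-R_m(t))$ and interface radius $R_m(t)$ solve the associated ODE system, in which $G$ enters --- together with their $m\to\infty$ limits, which are radial solutions of \eqref{parabolic_elliptic} with an explicitly computable normal speed of $\partial\Omega(t)$. Because the source can make the pressure nucleate, this family must include a nucleating type, with pressure profile identically zero while $\tilde\rho=\nu(1-\rho)$ grows up to $\nu$ and a new component of $\{p>0\}$ appears; these barriers also produce the quantified statements on the front speed. Using the barriers, the uniform estimates, and the fact that the functions $b_m$ are increasing, uniformly Lipschitz and converge to $b=(\cdot)^+$ --- so that the Kim--Pozar framework of \cite{KP} applies in the limit --- I would show that $\bar u$ is a viscosity subsolution and $\underline u$ a viscosity supersolution of \eqref{parabolic_elliptic}: in the open sets $\{\bar u>0\}$, $\{\bar u<0\}$ this is the usual consistency argument, while at a free boundary point one localizes, inserts a slightly shrunk (resp.\ dilated) radial barrier tangent to a test function, and propagates the comparison principle for \eqref{pme} from finite $m$ to $m=\infty$, thereby reading off \eqref{no_jump} in the viscosity sense. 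For the initial trace an initial-layer analysis is needed: off $\Omega_0$ the uniform convergence $\rho_{m,0}\to\rho_0$ of \eqref{initial_intro2} gives $u_m(\cdot,0)\to\nu(1-\rho_0)$ directly, while on $\Omega_0$ the pressure relaxes from $p_{m,0}$ to the solution $w$ of $-\Delta w=G(w)$ in $\Omega_0$, $w=0$ on $\partial\Omega_0$, on a time scale tending to $0$, so $\bar u(\cdot,0^+)=\underline u(\cdot,0^+)=u_0$; the lateral data pass to the limit because $\rho_L<1$ keeps the free boundary away from $\partial\Omega$.

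It remains to establish a comparison principle for \eqref{parabolic_elliptic}: a bounded viscosity subsolution stays below a bounded viscosity supersolution whenever the ordering holds on $\partial_p Q_T$. Since $u^-\in[0,P_M]$ on the relevant range, the right-hand side $(b(u)-\nu)G(u^-)$ is Lipschitz in $u$, so it is absorbed into a Gronwall factor $e^{Lt}$ once the doubling-of-variables argument of \cite{KP}, adapted to the elliptic--parabolic structure and the matching condition, is carried through. Applying this to $\bar u$ and $\underline u$, which share the data $u_0$ and $\nu(1-\rho_L)$, gives $\bar u\le\underline u$; with $\underline u\le\bar u$ this forces $\bar u=\underline u=:u$, which is then continuous and the unique viscosity solution of \eqref{parabolic_elliptic} with those data. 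Equality of the half-relaxed limits upgrades $u_m\to u$ to local uniform convergence on $\overline{Q_T}$ --- globally when $\Omega=\R^n$, after controlling the tail via \eqref{ubdd} --- and since $\rho_m=\Phi_m^{-1}(1-\nu^{-1}u_m)$ with $\Phi_m(s)=s+\nu^{-1}s^m$ and $\Phi_m^{-1}$ converging uniformly on $[0,1+\nu^{-1}]$ to $\sigma\mapsto\min(\sigma,1)$, this yields $\rho_m\to 1-\nu^{-1}b(u)$ uniformly in $Q_T$, the full family converging by uniqueness of the limit. The main obstacles I anticipate are the verification of the free boundary condition for the half-relaxed limits together with the initial-layer analysis, and the re-derivation of the elliptic--parabolic comparison principle in the presence of the nucleation-generating source.
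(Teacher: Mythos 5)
Your overall architecture (uniform bounds, half-relaxed limits, the radial barrier family for finite $m$ converging to radial solutions of \eqref{parabolic_elliptic}, the perturbed-test-function argument at free boundary points, and the initial-layer analysis producing $u_0$ with the relaxed pressure $w$ on $\Omega_0$) is essentially the paper's. The gap is in your final step, and it is a genuine one.

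You propose to prove a comparison principle for \eqref{parabolic_elliptic} under \emph{non-strict} ordering on $\partial_p Q_T$ by running the doubling argument of \cite{KP} and absorbing the source into a Gronwall factor, and then to apply it to $\bar u=\limsup^*u_m$ and $\underline u=\liminf_*u_m$, which share the same data, to conclude $\bar u\le\underline u$ and hence $\bar u=\underline u=:u$ continuous. Two problems. First, the viscosity comparison principle available here (Theorem~\ref{thm:cp}) requires \emph{strict} separation on the parabolic boundary; removing that hypothesis is not a Gronwall issue but a structural one for this free boundary problem (the paper explicitly flags it as open), so the strict-separation theorem only pins down maximal and minimal solutions and cannot by itself yield $\bar u\le\underline u$. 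Second, and more decisively, the conclusion you are aiming for is false: because the source term can drive $b(u)$ to zero in the interior of the parabolic phase, the elliptic (negative) phase nucleates and $u^-$ jumps in time; consequently $\limsup^*u_m\ne\liminf_*u_m$ in general, and the limit $u$ is only LSC, not continuous. Any ``sub $\le$ super with equal data'' principle strong enough to give $\bar u\le\underline u$ would contradict this. What is actually true, and what the paper proves, is $\bar u=(\underline u)^*$ and $\underline u=(\bar u)_*$, with only $b(\bar u)=b(\underline u)$ continuous; this suffices for the uniform convergence of $\nu(1-\rho_m)$ and hence of $\rho_m$, but not of $u_m$ or $p_m$. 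The paper closes the loop by a different mechanism: it identifies the (unique, suitably normalized) viscosity solution with the regular weak solution of Alt--Luckhaus, whose $L^1$/duality comparison (Theorem~\ref{comparison_AL}) does tolerate non-strict ordering, and imports continuity of $b(u)$ from DiBenedetto--Gariepy. You need some substitute for this weak-solution input; without it, the uniqueness and the passage from half-relaxed limits to convergence do not go through. Your last step, converting convergence of $u_m$ into convergence of $\rho_m$ via $\Phi_m^{-1}$, should accordingly be restated in terms of the positive parts only.
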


\no We remark that the $p_m$'s converge  uniformly  to $u_-$ as long as $u_-$ is continuous. It turns out, however,  that $u_-$  may be  discontinuous in time. This is  due to the fact that (the positive part) $b(u)$ solves a parabolic equation with a sink term, which means that $b(u)$ can decrease to zero in the interior of its positive phase and nucleate a negative phase. Once the negative phase is created, the elliptic equation that needs to be satisfied in the negative phase leads to the jump discontinuity of $u_-$ over time. We refer to Section~\ref{sec:last} for a discussion about the propagation of the elliptic and parabolic phases. The Figure~\ref{fig:HSV}  illustrate the time discontinuities in $p_m$, and Figure~\ref{fig:HS} shows and additional discontinuity in the $\rho_m$'s when viscosity is neglected (Hele-Shaw problem). 
\smallskip

\begin{figure}[h]
	\centering
	\includegraphics[width=.26\textwidth]{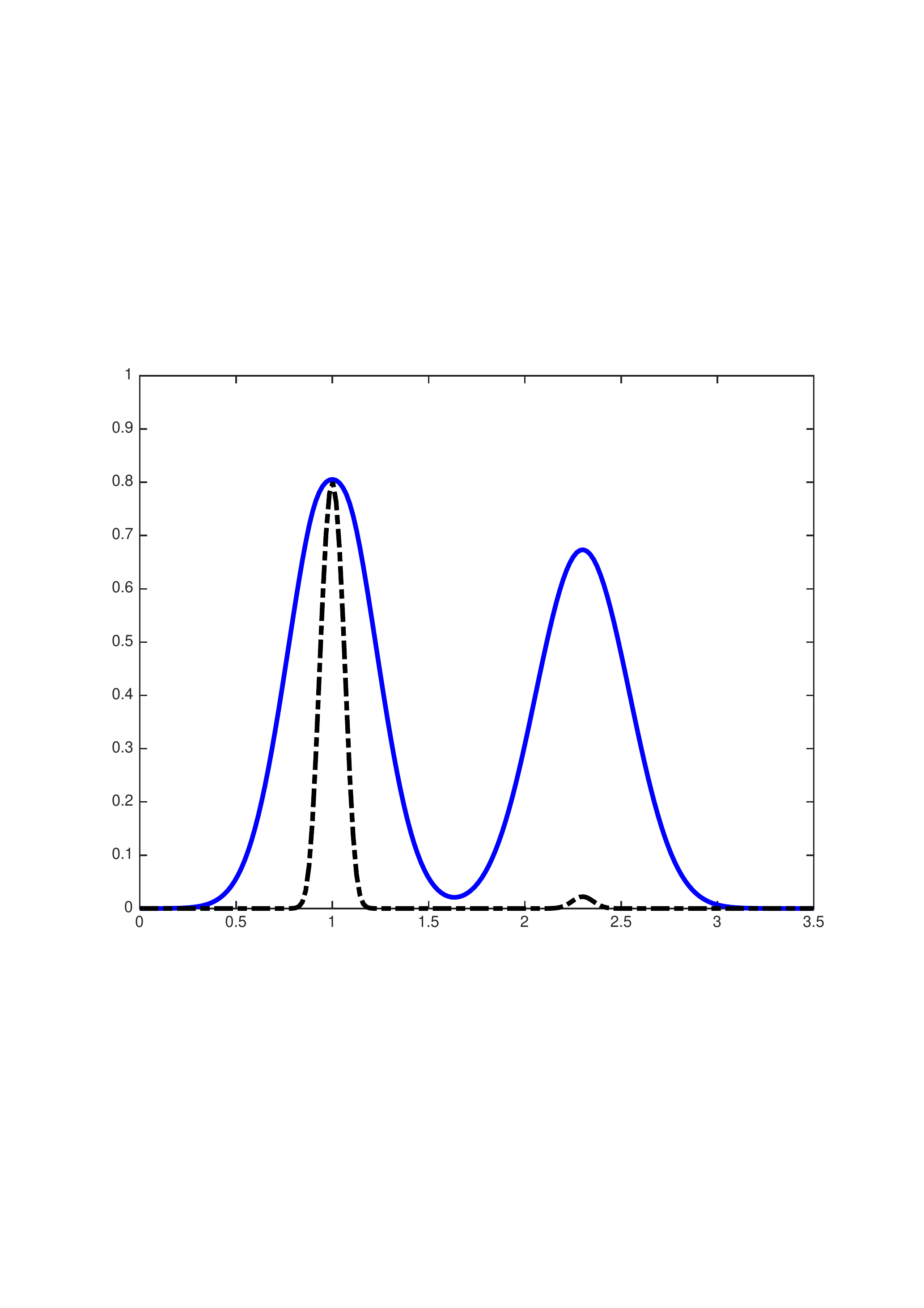}\hspace{-20pt}
	\includegraphics[width=.26\textwidth]{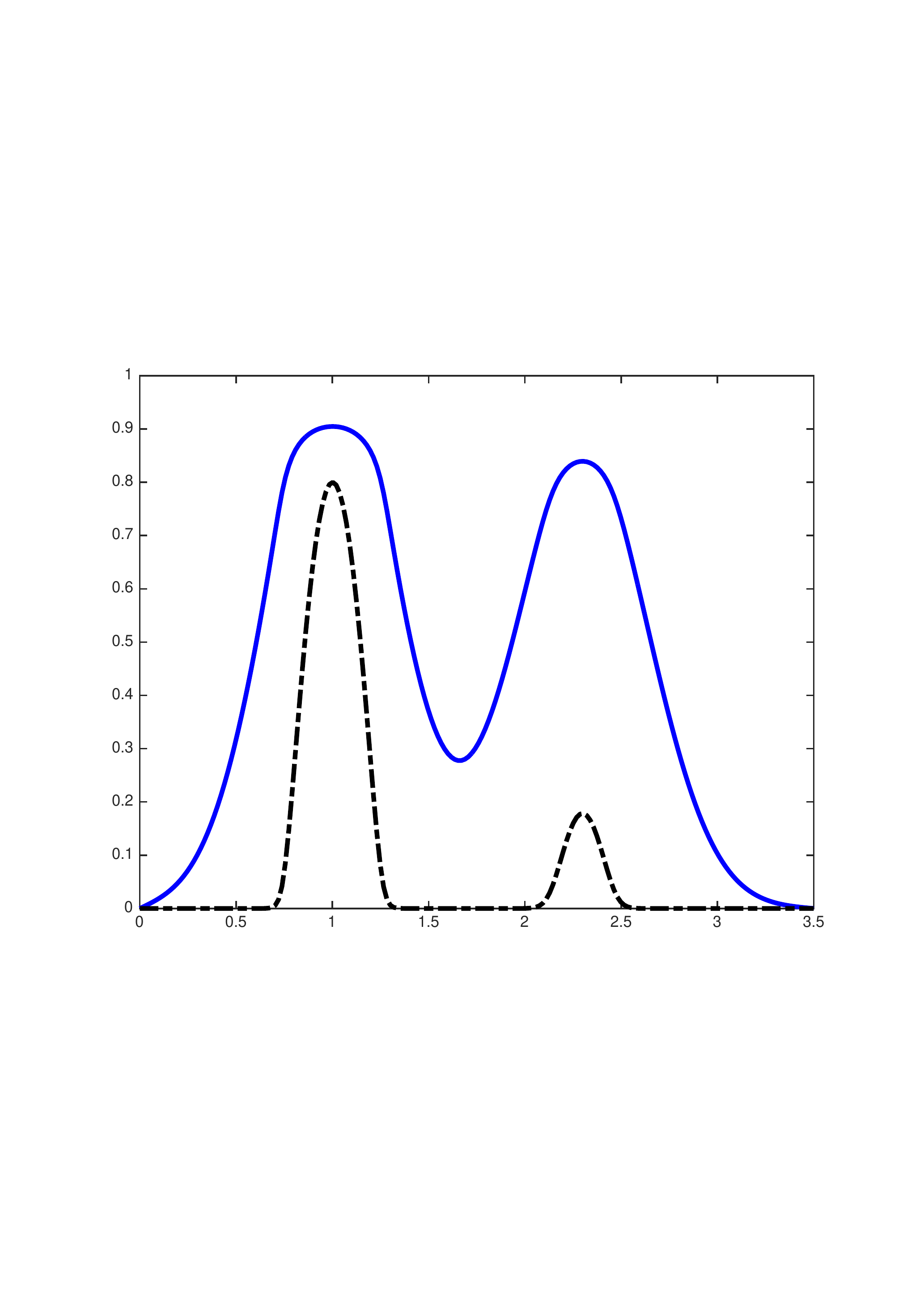}\hspace{-20pt}
	\includegraphics[width=.26\textwidth]{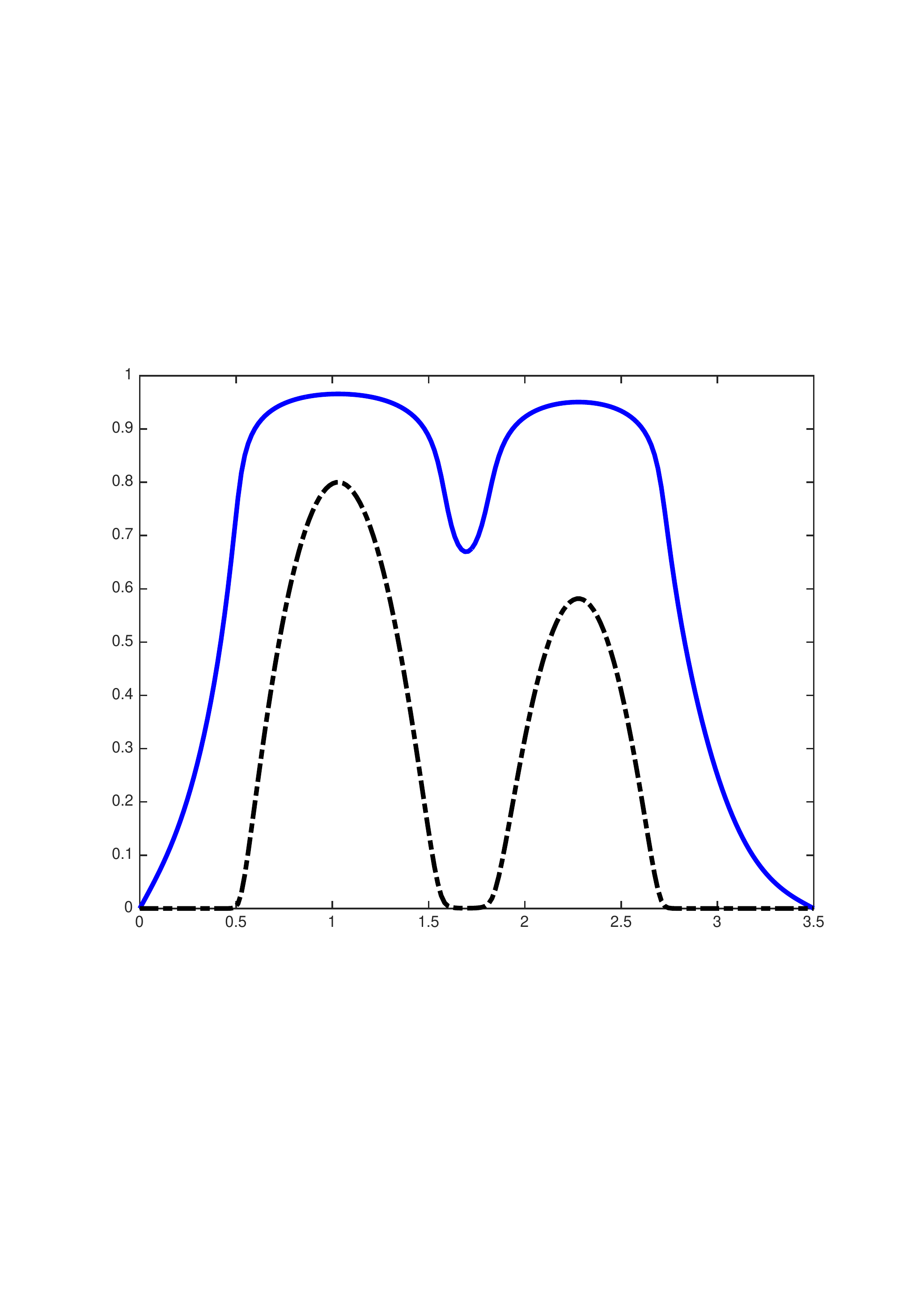}\hspace{-20pt}
	\includegraphics[width=.26\textwidth]{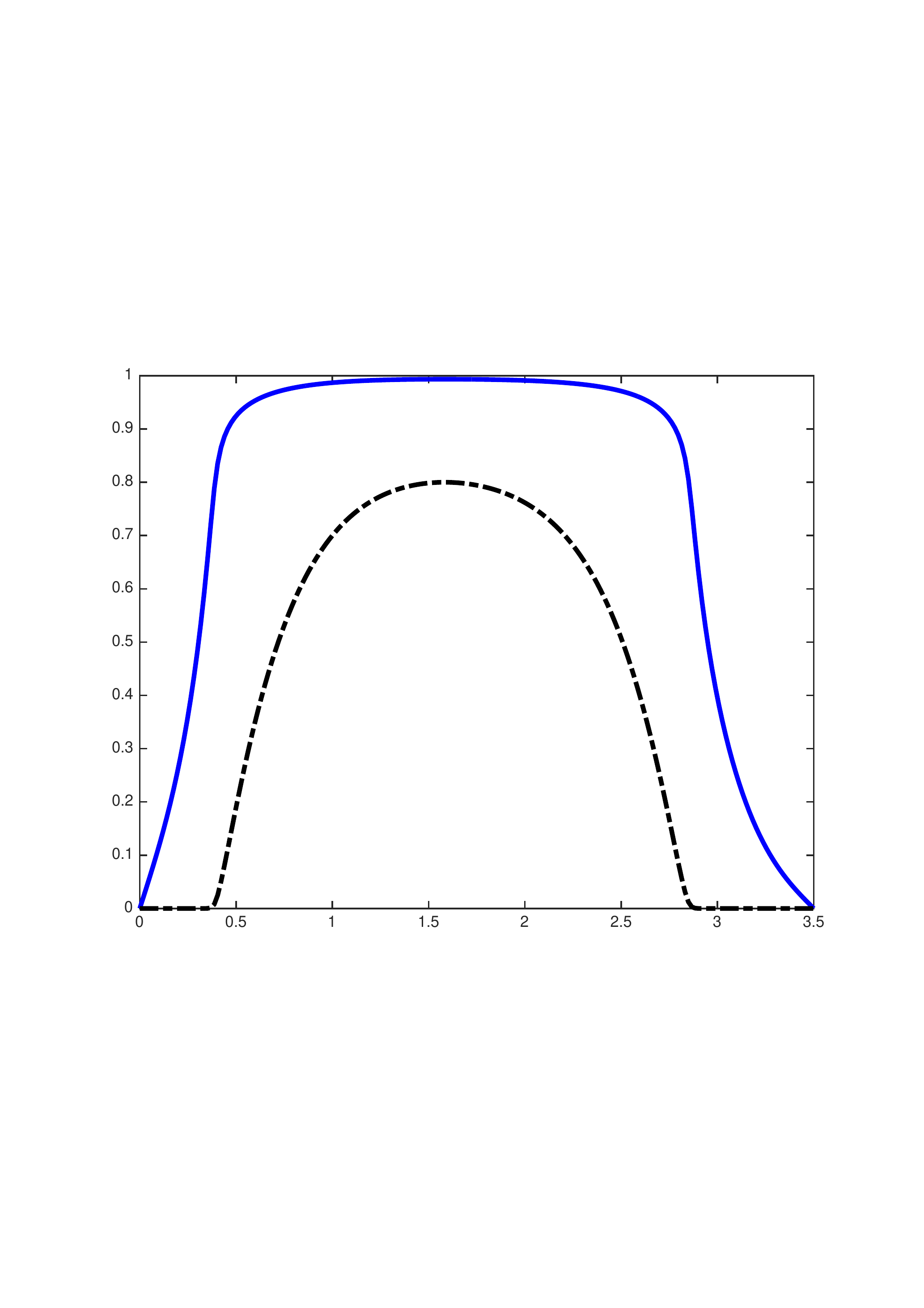}
\vspace{-18mm}		
	\caption{Snapshots of the evolution of the density $\rho_m$ (blue solid line) and pressure $p_m$ (black discontinuous line). The parameters are $m=20$, $\nu=.5$. These figures illustrate how the pressure profile is building up when density approaches one. Between the last two pictures, the density has continuously reached $\rho=1$ in the center, while $p$ has jumped discontinuously.}
	\label{fig:HSV}
\end{figure}

\begin{figure}[h]
	\centering
	\includegraphics[width=.26\textwidth]{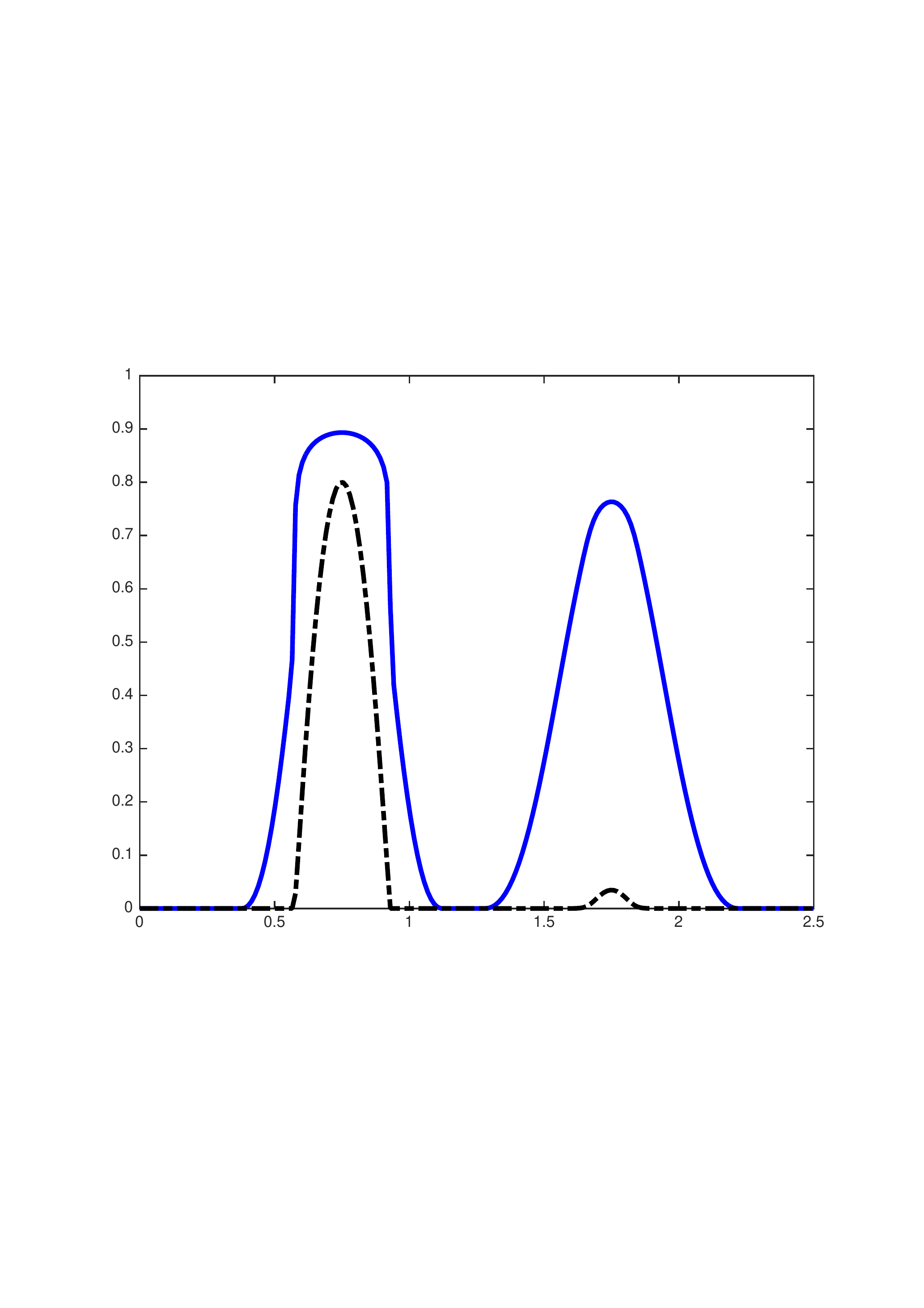}\hspace{-20pt}
	\includegraphics[width=.26\textwidth]{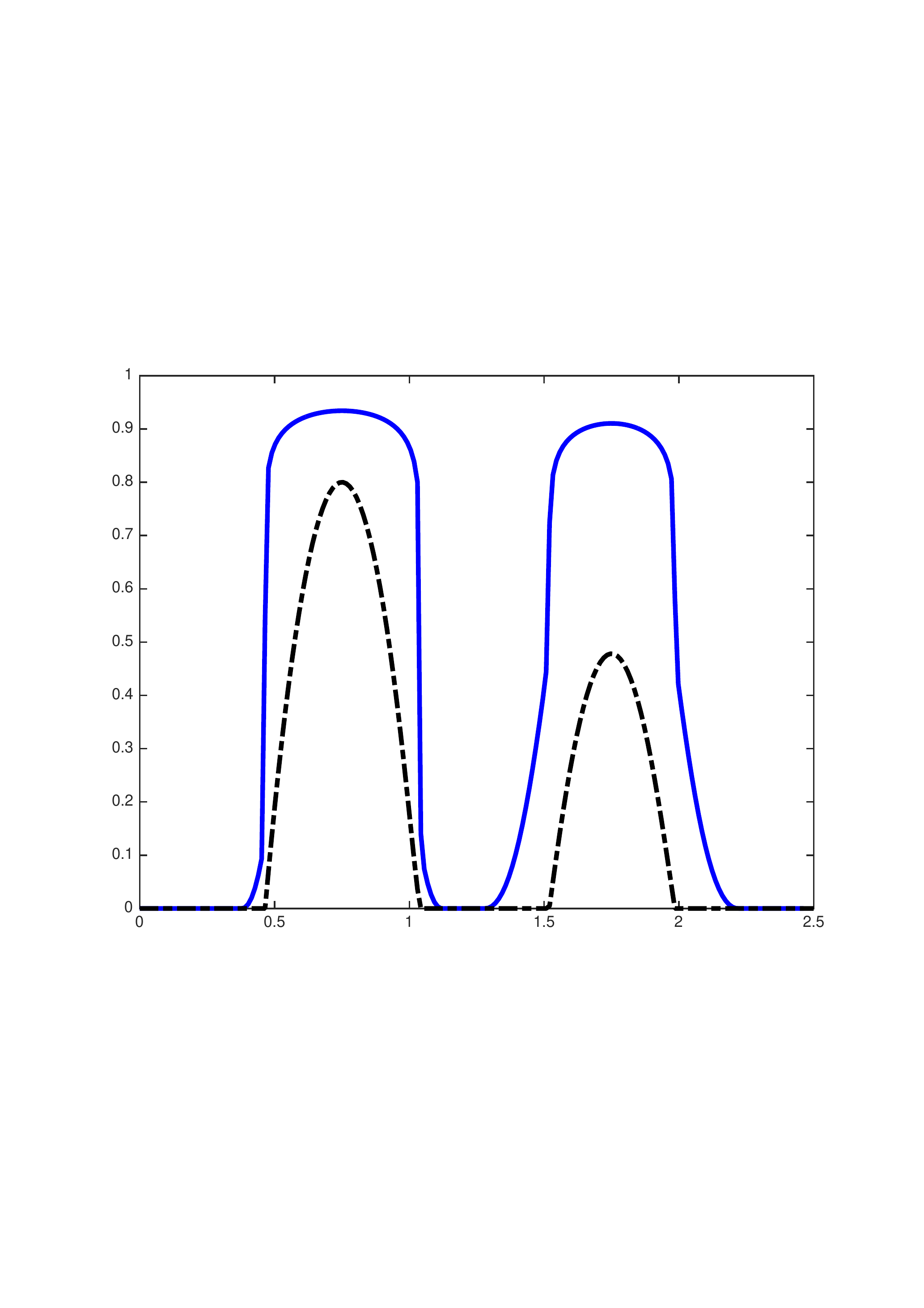}\hspace{-20pt}
	\includegraphics[width=.26\textwidth]{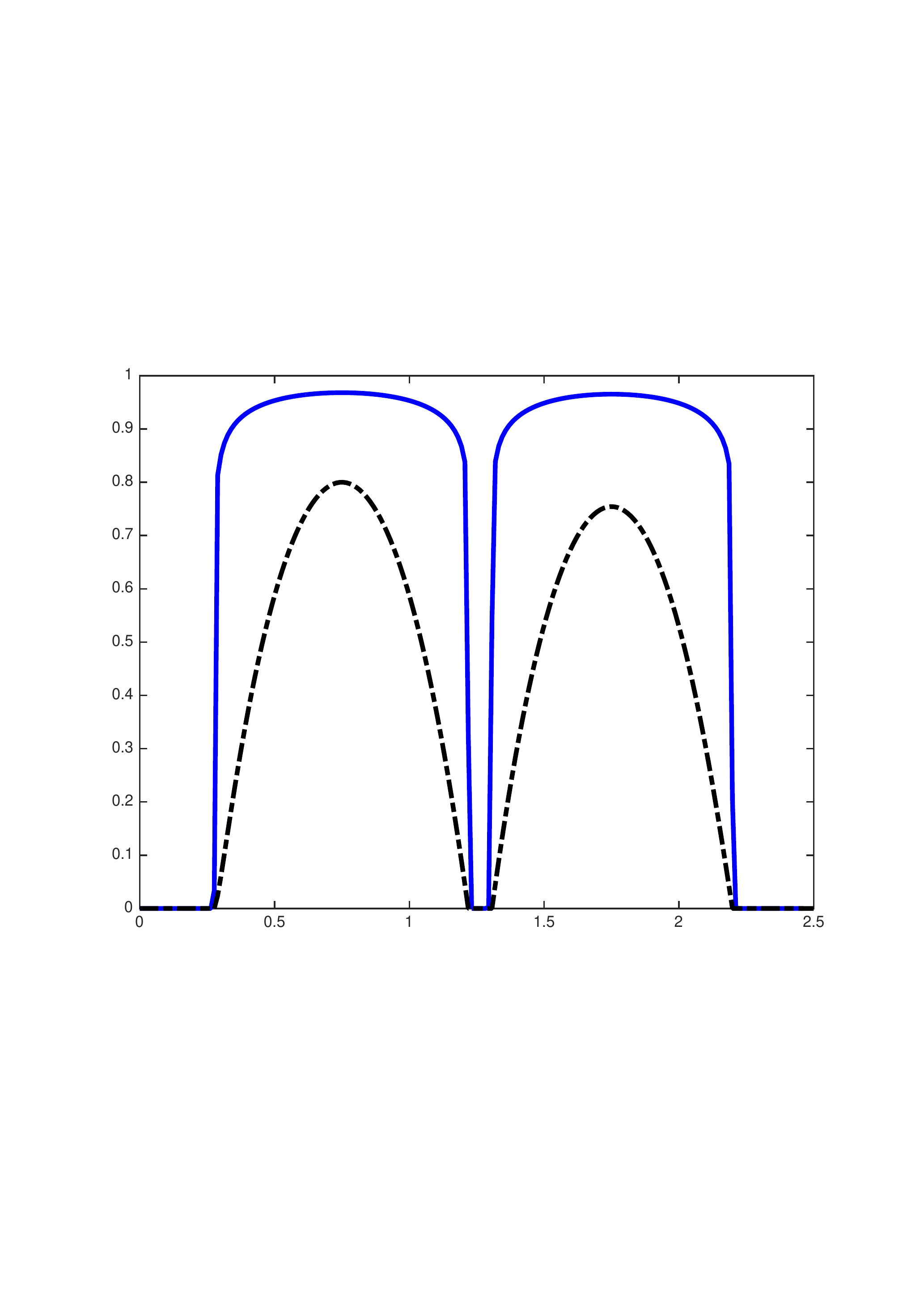}\hspace{-20pt}
	\includegraphics[width=.26\textwidth]{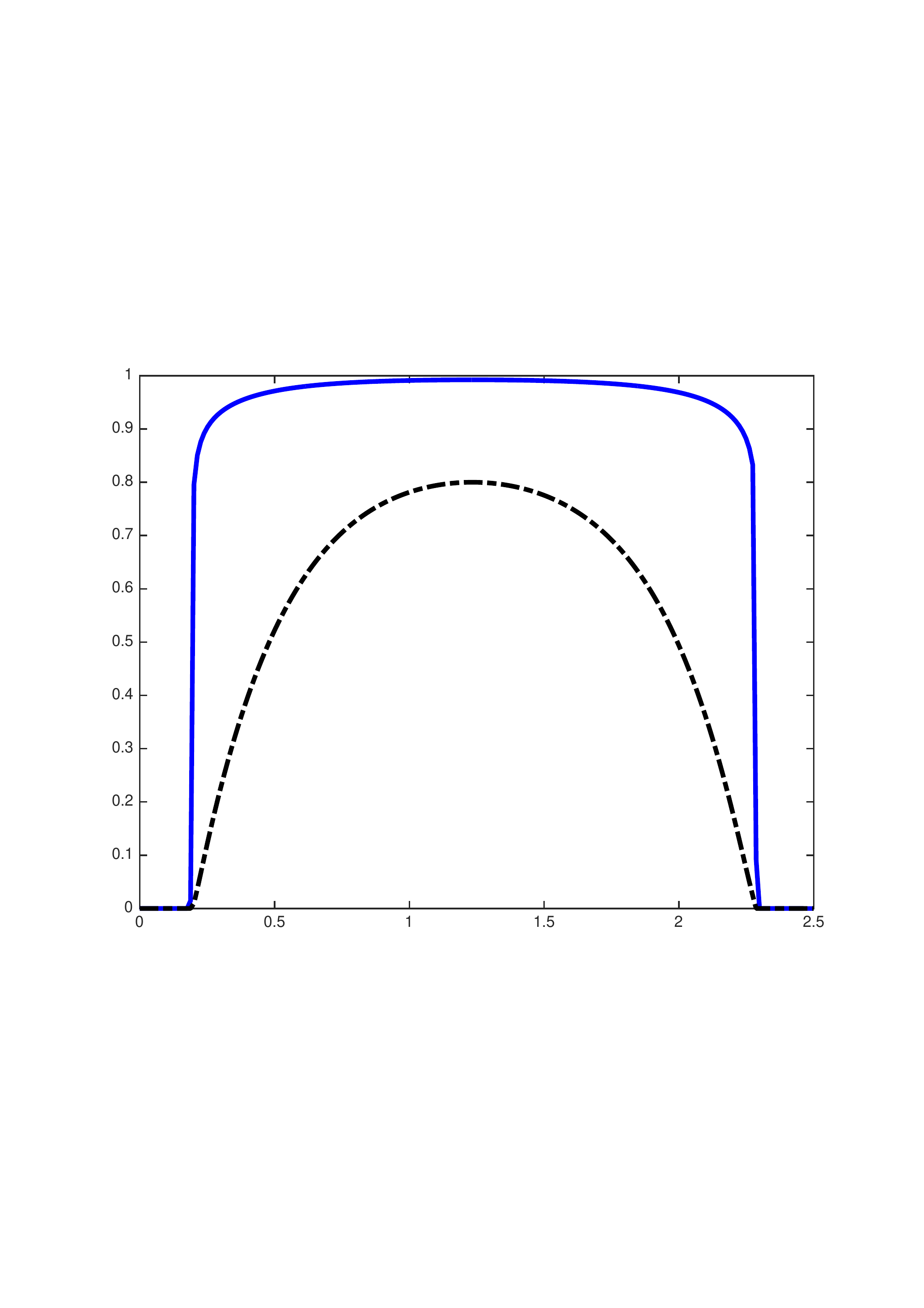}
\vspace{-18mm}		
	\caption{Snapshots of the evolution of the density $\rho_m$ (blue solid line) and pressure $p_m$ (black discontinuous line). The parameters are $m=20$, $\nu=0$ (not treated in this paper). Compared to Figure~\ref{fig:HSV}, we observe also a discontinuous behaviour of the density.}
	\label{fig:HS}
\end{figure}

\no Theorem \ref{thm:main} is proved by showing first that the $u_m$'s   converge to $u$ as $m\to\infty$ (see Corollary~\ref{convergence}). The convergence results for the $\rho_m$'s then follow from the definition of $u_m$ and the continuity properties of $b(u)$. 
  
\smallskip
 
\no As already mentioned above we use viscosity solutions which are based on appropriate choices of test functions. In the problem at hand these will be radial smooth solutions which are defined in Section~\ref{sec:limit}.  The main step in the proof of Theorem \ref{thm:main} is the following: 
 
 \begin{theorem} \label{thm:radial}
Assume that  $\phi$ is a classical radial solution of \eqref{parabolic_elliptic}. Then there exists a family of radial sub- and solutions super-solutions of \eqref{eqn:um}  converging, as $m\to \infty$,  to $\phi$. 
\end{theorem}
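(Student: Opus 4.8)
The plan is to recast \eqref{eqn:um} as a single scalar equation for the unknown $u_m$ — which for each fixed $m$ is uniformly parabolic and whose formal limit is precisely \eqref{parabolic_elliptic} — and then to transfer a given classical radial solution of the limit problem to the $m$-problem by a perturbation that is small but of the right sign.

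\textbf{Step 1: the $m$-equation as a scalar equation in $u_m$.} From \eqref{u_m} the map $\rho\mapsto -\rho^m+\nu(1-\rho)$ is smooth and strictly decreasing, hence invertible, so one may write $\rho_m=\rho_m(u_m)$, and then set $p_m=p_m(u_m):=\tfrac{m}{m-1}\rho_m(u_m)^{m-1}$ and $b_m(u_m):=\nu(1-\rho_m(u_m))$; a direct computation gives $b_m'(u_m)=\nu/(m\rho_m^{m-1}+\nu)$ as in \eqref{eqn:um}, so that \eqref{eqn:um} becomes
\begin{equation*}
[b_m(u_m)]_t-\nu\Delta u_m=(b_m(u_m)-\nu)\,G(p_m(u_m)).
\end{equation*}
I will record the elementary facts that $b_m$ is smooth and strictly increasing with $0<b_m'\le 1$, and that $b_m(u)\to b(u)=u^+$ and $p_m(u)\to u^-$ locally uniformly in $u$ as $m\to\infty$; combined with the a priori bounds coming from \eqref{initial_intro} (which keep $u_m$ in a fixed compact range), these convergences are uniform on the relevant set. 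The upshot is that for fixed $m$ the left side is nondegenerate parabolic, while the limit operator is the heat-type equation $u_t-\nu\Delta u=(u-\nu)G(0)$ on $\{u>0\}$, the elliptic equation $-\Delta u=-G(-u)$ on $\{u<0\}$, and the matching condition \eqref{no_jump} on $\partial\{u>0\}$.

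\textbf{Step 2: perturbing the classical solution.} Write $\phi=\phi(|x|,t)$ with free boundary $\{|x|=R(t)\}$. For each small $\delta>0$ I will construct radial functions $\phi^{\pm}_\delta$ that are \emph{strict} super- and sub-solutions of \eqref{parabolic_elliptic}: smooth on each of the two perturbed phases, vanishing on a perturbed interface $\{|x|=R^{\pm}_\delta(t)\}$, solving the phase equations with a slack $\ge c\,\delta$ of the correct sign, and satisfying \eqref{no_jump} with a strict inequality in the direction that makes the interface of $\phi^{+}_\delta$ move no slower, and that of $\phi^{-}_\delta$ no faster, than the equation prescribes. In radial symmetry every ingredient is explicit: the elliptic profile solves a second-order ODE in $r$, the exterior profile a one-dimensional parabolic problem, and the perturbation is carried out by an $O(\delta)$ dilation of the space variable, an $O(\delta)$ additive constant, and an $O(\delta)$ change of the interface speed, after which one re-solves the two profiles and uses the ODE/parabolic maximum principle to get the strict inequalities. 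I will also arrange $\phi^{\pm}_\delta\to\phi$ locally uniformly as $\delta\to 0$.

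\textbf{Step 3: closing the limit.} Fixing $\delta$, I will show that for all sufficiently large $m$ the function $\phi^{+}_\delta$ is a genuine (viscosity) super-solution of \eqref{eqn:um} and $\phi^{-}_\delta$ a sub-solution. On each phase this is immediate from Step 1: replacing $b$ by $b_m$ and $u^-$ by $p_m(u)$ perturbs the phase equations by an amount that tends to $0$ as $m\to\infty$, which is dominated by the slack $c\,\delta$ of Step 2. Across the interface, since $b_m'>0$ makes \eqref{eqn:um} nondegenerate parabolic, a $C^1$ function that solves the equation strictly on each side and has the correct one-sided flux inequality is a viscosity super-/sub-solution; and the flux inequality of Step 2 survives because near $\{u=0\}$ the transition of $\rho_m(u_m)$ from $\approx 1$ to the range $<1$ is controlled uniformly in $m$. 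Choosing $\delta=\delta(m)\to0$ slowly then gives a family of radial sub- and super-solutions of \eqref{eqn:um} converging to $\phi$, as claimed.

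\textbf{Main obstacle.} The delicate point is the interface analysis of Step 3: the parabolicity constant $b_m'$ of \eqref{eqn:um} degenerates to $0$ precisely on the tumor side $\{u<0\}$, so there is a thin layer across $\{|x|=R(t)\}$ in which $\rho_m$ passes through $1$; this layer must be controlled uniformly in $m$ so that the strict flux inequality built into $\phi^{\pm}_\delta$ is not destroyed when passing from the limit problem to \eqref{eqn:um}, and so that $\phi^{\pm}_\delta$ — which has only a corner, not a jump, in its time derivative at the interface — is genuinely ordered against the $m$-dynamics. Determining the correct \emph{sign} of the perturbation in Step 2, given that the free boundary condition is the gradient-matching condition \eqref{no_jump} rather than an explicit velocity law, is the other point requiring care; the remaining estimates are routine ODE/parabolic perturbation arguments together with standard stability for viscosity solutions.
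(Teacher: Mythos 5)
Your overall architecture --- an elliptic-phase barrier and a parabolic-phase barrier glued along the radial interface $\{|x|=a(t)\}$, with strict inequalities in each phase and a strict gradient ordering at the gluing circle --- is the same as the paper's. But the key quantitative claim in your Step 3 is false, and it is exactly where the content of the proof lies. You assert that replacing $b$ by $b_m$ and $u^-$ by $p_m(u_m)$ perturbs each phase equation by an amount that is $o(1)$ as $m\to\infty$, hence dominated by a fixed slack $c\delta$. Write \eqref{eqn:um} in nondivergence form and divide by $m\rho_m^{m-1}+\nu\sim mp_m+\nu$: the elliptic limit is recovered only where $mp_m\to\infty$, and the parabolic limit only where $\rho_m$ stays uniformly below $1$. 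In the transition layer around the interface where $mp_m=O(1)$, the term $b_m'(u_m)\,\partial_t u_m$ is of size $|\partial_t\phi|/\nu\sim |a'(t)|\,|D\phi^-|/\nu$, which is $O(1)$, not $o(1)$; since $\phi_\delta^{\pm}$ still vanishes on its own (shifted) interface, this layer is unavoidable for every fixed $\delta$, and a slack $c\delta$ with $\delta\to0$ cannot absorb an $O(1)$ error. The paper's construction deals with precisely this: the forcing $f_m$ in \eqref{supersolution} has a \emph{large} amplitude $A_0$ (chosen to beat $\sup|\partial_t\tilde u_m|$ and the source $\nu\rho_mG(p_m)$) but is supported on the shrinking set $\{p_0\le m^{-1/3}\}$, so that $f_m\to0$ in $L^1$ and the perturbed profile still converges to $p_0$ in $C^1$. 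Some device of this kind --- large where the diffusion degenerates, small in integral --- is needed; a uniform $O(\delta)$ slack is not enough.

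The second point you flag but do not resolve is the survival of the flux inequality across the gluing circle. Since $Du_m=-(m\rho_m^{m-1}+\nu)D\rho_m$, the exterior gradient $|Du_{m,2}|$ at $|x|=a(t)$ is extremely sensitive to the value of $\rho_m$ there: if $\rho_m=1-O(1/m)$ at the interface, then $m\rho_m^{m-1}$ stays of order $m$ and the gradient gap is destroyed in one direction or the other. The paper pins this down by gluing at the level $c_m=\Psi(\nu/m^3)$ (see \eqref{est:c_m}), which forces $\hat\rho_m^{m-1}=\nu/m^3$ on $\{|x|=a(t)\}$, hence $|D(\hat\rho_m^m)|\le m^{-2}|D\hat\rho_m|$ and $|Du_{m,2}|=(1+o(1))\nu|D\hat\rho_m|$, so the strict inequality $|D\phi^+|\ne|D\phi^-|$ of the classical barrier passes to the $m$-level. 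Your statement that the transition of $\rho_m$ through $1$ ``is controlled uniformly in $m$'' is the assertion to be proved, not a consequence of the setup. (A minor further point: the glued barrier is not $C^1$ across the interface --- the gradient jump with the correct strict ordering is what makes it a viscosity barrier for \eqref{eqn:um}; the paper is explicit that these barriers ``will not be smooth across their interface.'')
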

 
 
 
\subsection*{Organization of the paper.} 
In Section~\ref{sec:limit} we introduce the notion of viscosity solution of the limit problem and discuss its existence and uniqueness which rely on the results of \cite{AL} and \cite{KP}. In Section~\ref{sec:radial} we prove Theorem~\ref{thm:radial}. We used the results of Section~\ref{sec:limit} and Section~\ref{sec:radial} to prove Theorem \ref{thm:main} in Section~\ref{sec:general}. In Section~\ref{sec:last}  we present estimates on the speed of propagation  of the interface. In the Appendix we touch upon the proof of the comparison for viscosity solutions to the limit problem.

\subsection*{Notation and Terminology}  A nonempty set $E \subset \R^n\times \R$ is called a parabolic neighborhood  if $E = U \cap \big(\R^n \times (0, \tau] \big)$ for some open set $U \subset \R^n\times \R$ and some $\tau \in(0,\infty)$ and  $\partial_P E := \bar{E} \setminus E$ is  the parabolic boundary of $E$. Whenever we refer to a parabolic neighborhood, we assume $U$ and $\tau$ are known. For  an open subset $A$ of $\R^n \times \R$ and a  continuous function $v: \bar A \to \R$, where $\bar A$ is the closure of $A$, $\{v >0\}, \{v=0\}$ and $\{v<0\}$ are respectively the sets $\{y\in \bar A: v(y) >0\}, \{y\in \bar A: v(y)=0\}$ and $\{y\in \bar A: v(y)<0\}$.  For a set $A$, we write $\chi_A$ for its characteristic function.  If $A, B$ are subsets of $R^k$, $A+B:=\{a+b: a\in A, b \in B\}$. Given a set $U\subset \R^k$, $USC(U)$ and $LSC(U)$ are respectively the sets of upper and lower semicontinuous functions on $U$; if we are dealing with bounded semicontinuous functions we write $BUSC(U)$ and $BLSC(U)$.  If $u: U\to \R$ is bounded, $u^{\star, \bar U}$ and $u_{\star, \bar U}$ denote respectively its  upper and lower-semicontinuous envelope. 
We write $B_R(x)$ for the open ball in $\R^n$ centered  $x\in \R^n$ with radius $R>0$. The duality map between a Banach space and its dual is denoted by $<\cdot, \cdot>$. If  $A$ is open subset of $\R^n \times \R$,  $C_{x,t}^{2,1}(A)$ denotes the space of functions on $A$ which are continuously twice differentiable in $x$ and continuously differentiable in $t$; similarly, if $B$ is an open subset of $\R^k$, $C^l_x(B)$ denotes the space of $l$-times  continuously differentiable functions on $B$; when there is no confusion we omit the dependence on $A$ and $B$. We say that a constant is dimensional if it depends only on the dimension.  We denote by $D\phi (\xi,\tau)$ the gradient at $(\xi,\tau)$ of  $\phi:U\to \R$, where $U$ is an open subset of $\R^n \times \R$ and, $D\phi^{\pm}(\xi,\tau):= \lim_{\substack{(x,t) \to (\xi,\tau)\\(x,t) \in \set{\pm \vp > 0}}} D\vp(x,t)$. For a family of functions $f_m :\Omega\times (0,\infty) \to \R$, ${\liminf}_*f_m$ and ${\limsup}^*f_m$  denote the lower and upper semi-continuous limits, that is 
$
{\liminf}_*f_m(x,t):= \lim_{r\to 0}\inf\{  f_m(x+y, t+s) : (x+y,t+s)\in\bar{\Omega}\times (0,\infty), |y|+|s|\leq r, m\geq r^{-1}\},
$
and
$
{\limsup}^*f_m(x,t):= \lim_{r\to 0}  \sup\{  f_m(x+y, t+s) : (x+y,t+s)\in\bar{\Omega}\times (0,\infty), |y|+|s|\leq r, m\geq r^{-1}\}.
$

\section{Viscosity and weak solutions of the limit problem}
\label{sec:limit}
\no We introduce the notions of viscosity and weak (distributional) solutions to the limit problem 
\begin{equation}\label{limitproblem}
\begin{cases}
b(u)_t - \nu\Delta u = (b(u)-\nu)G(u^-) \  \hbox{ in } \ Q_T,\\[1mm]
u=u_0 \ \text{on} \ \Omega \times \{0\},\\[1mm]
u=g \ \text{on} \ \partial \Omega \times [0,T],
\end{cases}
\end{equation}
and we discuss their existence and uniqueness. While we rely on the theory of weak solutions  to obtain uniqueness results, viscosity solutions are used to prove  pointwise convergence results in Section~~\ref{sec:general}  as well as to derive information about  the evolution of parabolic and elliptic phases in Section~\ref{sec:last}.

\subsection*{Viscosity solutions: definition and comparison principle.} Following  \cite{KP} we define viscosity solutions of \eqref{limitproblem}. We begin by introducing the class of  test functions which are classical sub-and super-solutions of the problem with the specific properties stated in the next definition.

\begin{definition}[Classical sub- and super-solutions]
\label{de:classicalSubsolution}
$E$ be a parabolic neighborhood. Then $\vp \in C(\cl{E})$ is  a classical subsolution of  \eqref{parabolic_elliptic} in  $E$ if
\vskip.025in
\no (i)~$\vp \in C^{2,1}_{x,t}(\overline{\{\vp > 0\}})\cap C^{2,1}_{x,t}(\overline{\{\vp < 0\}})$,
\vskip.025in
\no (ii)~ $\set{\vp = 0} \subset \partial\set{\vp > 0} \cap \partial\set{\vp < 0}$ and $\abs{D\vp^\pm}  > 0$ on $\set{\vp = 0}$,
\vskip.025in
\no (iii)~ $b(\vp)_t - \nu\Delta \vp \leq  (b(\vp) - \nu)G(\vp^+)$ on $\set{\vp > 0}$ and $\set{\vp < 0}$, and 
\vskip.025in
\no (iv)~$\abs{D\vp^+} > \abs{D\vp^-}$ on $\set{\vp = 0}$;

\no $\vp$ is a strict classical subsolution if the inequalities in (iii) and (iv) are strict. 
Supersolutions are defined similarly by reversing the inequalities in (iii) and (iv). 
\end{definition}

\no Next we define the viscosity solutions to \eqref{parabolic_elliptic}. We begin with the notions of (semicontinuous) sub-and super-solutions.

\begin{definition}[Viscosity sub- and super-solutions]
\label{de:viscositySubsolution}

We say $u \in BUSC(\overline {Q_T})$ (resp.  $u \in BUSC(\overline {Q_T})$  is a viscosity subsolution (resp. supersolution) to \eqref{limitproblem} in $Q_T$,  if 
\vskip.025in
\no (i)~$u(\cdot, 0) \leq u_0$ (respectively  $u(\cdot, 0) \geq u_0$) on $\Omega \times \{0\}$, $u \leq g$ (resp. $u \geq g$) on $\partial \Omega \times [0,T]$, 
\vskip.025in
\no (ii)~ $u < \vp$ (resp. $u > \vp$) on $E$ for any strict classical subsolution (resp. supersolution) $\vp$ on any parabolic neighborhood $E \subset Q_T$ for which $u < \vp$ (resp. $u > \vp$) on $\partial_P E$. 
 
We say $u$ has initial data $u_0$ if $u$ converges uniformly to $u_0$ as $t\to 0$.   
\end{definition}

\no Given that we do not expect to have continuous but rather lower semicontinuous solutions to \eqref{limitproblem} (recall the earlier discussion about possible nucleation),  we define the notion of viscosity solution for such functions.

\begin{definition}[Viscosity solutions]
A function  $u \in BLSC(\overline {Q_T})$ is a viscosity solution of \eqref{limitproblem} in $Q_T$ if its upper semi-continuous envelope  $u^{*,\overline { Q_T}}$ is a viscosity subsolution in $Q_T$ and $u$ is a viscosity supersolution in $Q_T$.
\end{definition}

\no Next we introduce conditions on the data of \eqref{limitproblem}  that are necessary in order to have a well-posed theory. 
\smallskip

\no For the lateral boundary we assume, for simplicity, that 
\begin{equation}\label{lbc}
g \ \text{ is continuous and positive.}
\end{equation}

\no For the initial condition,  if $\Omega$ is bounded,  we assume that $u_0: \overline{\Omega} \to \R$ satisfies 
\begin{equation}\label{initial_1}
u_0\in C(\overline{\Omega}),  \ u_0<0 \hbox{ on } \partial\Omega \hbox{ and } -\Delta u_0 = G(u_0) \hbox { in } \{u_0 \geq 0\},
\end{equation} 
and 
\begin{equation}\label{initial_2}
\Gamma(u_0):=\partial\{u_0 \geq 0\}\hbox{ is locally a Lipschitz graph.}
\end{equation}

\no When  $\Omega=\R^n$, we require, in addition, that
\begin{equation}\label{cond_unbdd}
\{u(x,0)<0\} \subset B_R(x) \hbox{ for some }R>0 \hbox{ and } u(\cdot,0)-\nu\in L^1(\R^n).
\end{equation}

\no Note  that,  in terms of $p=u^+$ and $\rho = \nu^{-1}u_- +1$, the above conditions imply that the pressure phase is initially bounded and the density is integrable.

\no We recall that we use only strict sub-and super-solutions as test functions. 
As a result it is possible to narrow the choice of the parabolic neighborhood $E$ in Definition~\ref{de:viscositySubsolution} and to use, instead, parabolic cylinders of the form $Q' = \Omega' \times (t_1, t_2] \subset Q_T$, with  $\Omega' \subset \Omega$ having a smooth boundary.  
Indeed, suppose that $E \subset Q_T$ is a parabolic neighborhood, $\vp$ is a strict classical supersolution on $E$, $u < \vp$ on $\partial_P E$ and  $u \geq \vp$ at some point in $E$. Define $\tau:= \sup \set{s: u < \vp \text{ on } E \cap \set{t \leq s}} \in \R$. The set
$A:= \set{x : (x,\tau) \in E, u \geq \vp}$
is compact and, therefore, $d:= \text{distance}(A \times \set{\tau}, \partial_P E) > 0$. Consider the parabolic cylinder $Q' = (A + B_{d/2}) \times (\tau - d, \tau]$. Clearly $Q' \subset E$ and $u < \vp$ on $\partial_P Q'$; the boundary of $A + B_{d/2}$ can be easily regularized.


\smallskip

\no The proof of the comparison principle follows along the lines of the one in \cite{KP} the only difference being  the contribution of the source term $(b(u)+\nu)G(u)$, which is not present in the equation considered in \cite{KP}. The needed modifications are presented  in the Appendix.
Here we note, that, although, as discussed earlier, the source term in the negative (elliptic) phase of $u$ may give rise to nucleations, this does not
hinder the argument  in \cite{KP}. Indeed nucleations  can not occur at the contact point of two ``regularized'' solutions which are  initially strictly ordered (see Appendix). Hence we have the following theorem, which corresponds to Theorem 3.1 of \cite{KP}.

\begin{theorem}\label{thm:cp}
Assume \eqref{lbc}, \eqref{initial_1}, \eqref{initial_2} and, if $\Omega=\R^n$, \eqref{cond_unbdd} and let  $u \in BUSC(\overline {Q_T})$ and $v\in BLSC(\overline {Q_T})$ be respectively a sub- and a super-solution of \eqref{parabolic_elliptic} in $Q_T$ for some $T>0$. If $u< v$ on the parabolic boundary of $Q_T$, then $u<v$ in $Q_T$.
\end{theorem}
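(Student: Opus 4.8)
The plan is to follow closely the comparison argument of Kim--Pozar \cite{KP} for the elliptic--parabolic free boundary problem, adjusting it for the presence of the zeroth-order term $(b(u)-\nu)G(u^-)$; since \eqref{parabolic_elliptic} has no explicit dependence on $(x,t)$ and its right-hand side is Lipschitz in the unknown, most of that machinery carries over unchanged. First I would pass to regularized solutions: for small $r>0$ replace $u$ by its sup-convolution $u^r(x,t):=\sup\{u(y,s):|x-y|^2+|t-s|^2\le r^2\}$ and $v$ by the analogous inf-convolution $v_r$ (the dilation/erosion regularizations of \cite{KP}). Because the equation is autonomous and its zeroth-order term depends only on the value of the unknown, $u^r$ is again a viscosity subsolution and $v_r$ a viscosity supersolution on the slightly shrunk cylinder $Q_T^r$, and the regularization endows the free boundaries of $u^r$ and of $v_r$ with the one-sided ball conditions needed to fit a smooth classical barrier at a contact point; the assumptions \eqref{lbc}, \eqref{initial_1}, \eqref{initial_2} (and \eqref{cond_unbdd} when $\Omega=\R^n$, which supplies the decay/compactness replacing boundary data at infinity) guarantee that the data are compatible near $\partial_P Q_T$ and that these regularizations behave well up to the parabolic boundary. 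Since $u<v$ on $\partial_P Q_T$ and both functions are semicontinuous, $u^r<v_r$ on $\partial_P Q_T^r$ once $r$ is small, so it suffices to prove $u^r<v_r$ throughout $Q_T^r$ and then let $r\to0$, followed by a routine strictness upgrade.

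Suppose for contradiction that $\sup_{Q_T^r}(u^r-v_r)\ge0$, and let $(x_0,t_0)$ with $t_0>0$ minimal be a point where $u^r=v_r$; the boundary ordering forces $(x_0,t_0)$ into the interior. I would then distinguish three cases by the sign of the common value. If it is positive, both functions lie in the positive phase near $(x_0,t_0)$, where $b(u)=u$ and $u^-\equiv0$, so the equation is the affine uniformly parabolic equation $u_t-\nu\Delta u=(u-\nu)G(0)$ and the standard parabolic comparison principle (applied to $w:=v_r-u^r\ge0$, which satisfies $w_t-\nu\Delta w-G(0)w\ge0$) contradicts the minimality of $t_0$. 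If it is negative, both lie in the negative (elliptic) phase, where $b\equiv0$ and the equation reduces to the semilinear elliptic equation $-\nu\Delta u=-\nu G(u^-)$ with $G'<0$, so the zeroth-order term is monotone in the favorable direction; the elliptic comparison in this phase, which is where the argument of \cite{KP} is used essentially without change, again produces a contradiction. If the common value is $0$, the contact is on the free boundary, and here the one-sided ball conditions obtained from the regularization let one slip a strict classical supersolution $\varphi$ of \eqref{parabolic_elliptic} — a perturbed radial solution of the kind built in Section~\ref{sec:radial} — between $u^r$ and $v_r$, so that $u^r<\varphi$ on the parabolic boundary of a small cylinder but $u^r\ge\varphi$ somewhere inside; this is exactly where the strict gradient inequality $|D\varphi^+|>|D\varphi^-|$ of Definition~\ref{de:classicalSubsolution} is used, and it contradicts the definition of viscosity subsolution in Definition~\ref{de:viscositySubsolution}.

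The one genuinely new feature compared with \cite{KP}, and the step I expect to require the most care, is nucleation: the sink term can drive $b(u)$ down to $0$ in the interior of the positive phase, creating a fresh elliptic phase together with a time-jump of $u^-$, and one must rule out this being the mechanism that produces the first contact of $u^r$ and $v_r$. The reason it cannot, which is what the Appendix makes precise, is that $u^r$ and $v_r$ are strictly ordered on the relevant parabolic boundary at time $t_0$: a nucleation inside the positive phase of $v_r$ would have to begin at a point where $v_r$ — a small-ball infimum of a supersolution — first reaches $0$, but $v_r$ cannot descend to meet $u^r$ before $t_0$ without already violating the affine parabolic comparison of the first case above, and symmetrically for $u^r$; hence at a first contact point no nucleation has occurred there and the three cases are exhaustive. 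Each being contradictory, we conclude $u^r<v_r$ in $Q_T^r$ for all small $r$, and therefore $u<v$ in $Q_T$.
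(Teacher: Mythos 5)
Your overall route --- regularize by sup/inf-convolutions, argue by contradiction at a first crossing time, split into interior-parabolic, interior-elliptic and free-boundary contacts, and insert a radial classical barrier at a free-boundary contact --- is the same as the paper's (which itself defers most of the machinery to \cite{KP} and only records the modifications in the Appendix). However, the one step you yourself flag as delicate, the exclusion of nucleation at the first contact, is not actually proved by your argument: saying that ``$v_r$ cannot descend to meet $u^r$ before $t_0$ without violating the parabolic comparison'' is circular, because by definition of $t_0$ nothing happens before $t_0$; the danger is a discontinuous-in-time drop of $v_r$ to $0$ \emph{at} $t_0$ at a point interior to $\{u^r(\cdot,t_0)\ge 0\}$. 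What closes this gap in the paper is a quantitative statement (Lemma~\ref{finite_shrink}): the nonnegative phase of the regularized subsolution cannot expand discontinuously, so there is a genuine backward space-time cylinder on which $Z\ge 0$, hence $W>0$ there by the definition of $t_0$, and only then does the strong maximum principle for the uniformly parabolic equation forbid $W$ from reaching $0$ at $(x_0,t_0)$ (this is Lemma~\ref{contact}). Without such a quantitative finite-speed estimate your three-case analysis is not exhaustive.

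Two further points. First, you omit the second place where the source term forces a new argument: at a free-boundary contact one must locate in time the interior and exterior touching sets $\Xi_Z$, $\Xi_W$ (the paper's Lemma~\ref{finite_prop}, $t_0-r<s_1,s_2<t_0+r$) before the barrier of Section~\ref{sec:radial} can be slipped in; this again rests on the propagation bounds of Section~\ref{sec:last} (Lemmas~\ref{finite_shrink} and~\ref{expansion_finite}), which your proposal never invokes. Second, a technical but consequential deviation: you regularize over Euclidean space-time balls, whereas the paper (following \cite{KP}) uses the anisotropic sets $\Xi_r=D_r+E_r$ with $E_r=\{|x-y|^3+|t-s|^2<r^2\}$. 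The specific shape of $\Xi_r$ near its top is what produces the gradient asymptotics of Lemma~\ref{appendix} (blow-up of $|Dv|$ for the interior caloric barrier, decay for the exterior one) on which the free-boundary contact argument depends; a round ball does not have the right parabolic scaling there, so this substitution would have to be repaired or justified.
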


\no Note that in the above theorem we require  that $u$ and $v$ are strictly separated  on the parabolic boundary of $Q_T$. For this reason, Theorem~\ref{thm:cp}  yields,  for  given data, only   the maximal and minimal viscosity solutions. Removing the condition of strict separation  remains an interesting (and important) open question for most of free boundary problems. For the problem at hand, we are able to borrow the theory of  regular weak solutions of \cite{AL} to study the  stability and uniqueness properties of the viscosity solutions.

\subsection*{Weak solutions: existence and comparison principle} We recall the notion of regular weak solution to \eqref{limitproblem} and  then state the theorems in \cite{AL} which concern the existence and stability of such solutions. The difference in the proofs due to the source terms are discussed 
in the Appendix. We note that we do not try to recall the full generality of \cite{AL} but we modify definitions and statements to apply to the specific problem we study here.   Finally,  we remark that \cite{AL} only considers bounded domains $\Omega$.

\smallskip

\no We begin with the definitions of weak and regular solutions. The former is a notion of solutions based on duality (integration by parts) while the second, as the name indicates, requires  more regularity (in time).

\begin{definition}[Weak solutions] $u \in  L^{2}(0,T;H^{1}(\Omega))$ with $b(u)\in L^\infty(0,T;L^1(\Omega)) \ \text{ and } \ \partial_t b(u)\in L^2(0,t; H^{-1}(\Omega))$ is a weak solution to 
to \eqref{parabolic_elliptic} in $Q_T$ if, 
for every test function $\zeta\in L^2(0,T;H^{1}_0(\Omega))$,  
\begin{equation}\label{weak}
\int_0^T<\partial_t b(u), \zeta> dt + \int_0^T\int_{\Omega} D u\cdot D\zeta dxdt = \int_0^T \int_{\Omega} (b(u)-\nu)G(u^-)\zeta dxdt.
\end{equation}
Weak sub- and super-solutions are defined with the corresponding inequalities replacing the equality above.
\end{definition}
\begin{definition}[Regular weak solutions] A weak solution $u$ is regular if $\partial_t b(u) \in L^2(Q_T)$.
\end{definition}

\no The following corresponds to Theorem 2.2 of \cite{AL}.

\begin{theorem}\label{comparison_AL}
Assume that the bounded domain $\Omega$ has Lipschitz boundary. If $u$ and $v$ are respectively a weak subsolution and supersolution of \eqref{parabolic_elliptic} in $Q_T$ 
and, in addition,  $\partial_t (b(u)-b(v))\in L^1(Q_T)$ and  $u \leq v$ on the parabolic boundary of $Q_T$, then $u\leq v$ in $Q_T$.
\end{theorem}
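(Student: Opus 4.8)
The plan is to follow the $L^1$-contraction argument behind Theorem~2.2 of \cite{AL}, the only structurally new feature being the source term $f(w):=(b(w)-\nu)G(w^-)$, which is Lipschitz in $w$ and, as I will check, sign-compatible with the monotonicity of $b$, so that it contributes only a Gronwall term. Subtracting the weak supersolution inequality for $v$ from the weak subsolution inequality for $u$ gives, for every $\zeta\in L^2(0,T;H^1_0(\Omega))$ with $\zeta\ge0$,
\[
\int_0^T<\partial_t(b(u)-b(v)),\zeta>dt+\int_0^T\!\!\int_\Omega D(u-v)\cdot D\zeta\,dx\,dt\le\int_0^T\!\!\int_\Omega(f(u)-f(v))\,\zeta\,dx\,dt .
\]
The hypothesis $\partial_t(b(u)-b(v))\in L^1(Q_T)$ makes $b(u)-b(v)\in C([0,T];L^1(\Omega))$, so its initial trace is well defined and, since $b$ is nondecreasing and $u\le v$ at $t=0$, equals a nonpositive function; similarly $u\le v$ on $\partial\Omega\times(0,T)$.

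\emph{Step 1: $b(u)\le b(v)$.} Choose smooth nondecreasing $S_\delta:\R\to[0,1]$ with $S_\delta\equiv0$ on $(-\infty,0]$ and $S_\delta\to\chi_{\{s>0\}}$ pointwise, and test with $\zeta=S_\delta(u-v)$; this is admissible because $u\le v$ on $\partial\Omega\times(0,T)$. The diffusion term equals $\int_0^T\!\int_\Omega S_\delta'(u-v)|D(u-v)|^2\ge0$ and is dropped. For the source term, a short case check over $\{u,v\le0\}$, $\{v<0\le u\}$ and $\{0\le v<u\}$, using $G'<0$ and $b(s)=s^+$, gives $(f(u)-f(v))\chi_{\{u>v\}}\le G(0)(b(u)-b(v))^+$ pointwise, so the right side is $\le G(0)\int_\Omega(b(u)-b(v))^+dx$ in the limit $\delta\to0$. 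For the time term, since $b$ is nondecreasing $b(u)-b(v)$ and $u-v$ have the same sign, and since the $t$-derivative of a $W^{1,1}(0,T)$ function vanishes a.e.\ on its zero set, one obtains, by the chain-rule/monotonicity argument of \cite{AL},
\[
\lim_{\delta\to0}\int_0^t<\partial_t(b(u)-b(v)),S_\delta(u-v)>ds=\int_\Omega(b(u)-b(v))^+(\cdot,t)dx-\int_\Omega(b(u)-b(v))^+(\cdot,0)dx .
\]
The last term is $0$; combining the three pieces, $\tfrac{d}{dt}\int_\Omega(b(u)-b(v))^+\le G(0)\int_\Omega(b(u)-b(v))^+$, and Gronwall gives $b(u)\le b(v)$, i.e.\ $u^+\le v^+$. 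Hence $u\le v$ on $\{v\ge0\}$ and $\{v\le0\}\subseteq\{u\le0\}$.

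\emph{Step 2: $u\le v$.} It remains to compare on $\{v\le0\}$, where $b(u)=b(v)=0$. Test with $\zeta=z:=(u-v)^+$ (truncated to stay bounded if needed), supported in $\{u>v\}\subseteq\{v<0\}\cap\{u\le0\}$ and vanishing on $\partial\Omega$, hence admissible. On $\{z>0\}$ one has $b(u)-b(v)=u^+-v^+\ge0$, which combined with Step~1 forces $b(u)-b(v)=0$ there; as the $t$-derivative of $b(u)-b(v)\in W^{1,1}$ vanishes a.e.\ on $\{b(u)-b(v)=0\}$, the time term is zero. The diffusion term is $\int_0^T\!\int_\Omega|Dz|^2$. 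On $\{z>0\}$, $b(u)=b(v)=0$ makes $f(u)-f(v)=-\nu(G(u^-)-G(v^-))$, and $u>v$ with $u,v\le0$ gives $u^-<v^-$, so by $G'<0$ this is $<0$; thus the right side is $\le0$. Therefore $\int_0^T\!\int_\Omega|Dz|^2\le0$, so $z\equiv0$, i.e.\ $u\le v$ a.e.\ in $Q_T$.

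The main obstacle is the time-term limit in Step~1: the nonlinearity $S_\delta$ is composed with $u-v$, which has no a priori time regularity, so one cannot simply invoke a chain rule; this is precisely the point handled by the monotonicity argument of \cite{AL}, and it is unaffected by the presence of the source term. The one thing genuinely specific to the present problem is the sign estimate $(f(u)-f(v))\chi_{\{u>v\}}\le C(b(u)-b(v))^+$, which follows from $G'<0$ and the form of $f$ and is also what powers the elliptic comparison in Step~2; the few resulting modifications to \cite{AL} are carried out in the Appendix.
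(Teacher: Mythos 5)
Your proof is correct and follows essentially the same route as the paper: the Alt--Luckhaus test-function argument with $S_\delta(u-v)$, the sign estimate on the source term coming from $G'<0$, Gronwall to get $b(u)\le b(v)$, and then the vanishing of the gradient term to upgrade this to $u\le v$. Your Step~2 tests with $(u-v)^+$ directly rather than reusing the truncated test function as the paper does, and your three-case verification of $(f(u)-f(v))\chi_{\{u>v\}}\le G(0)(b(u)-b(v))^+$ is in fact more careful than the paper's displayed computation, but these are cosmetic differences, not a different argument.
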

 
\no The proof  parallels the one  of Theorem 2.2 in \cite{AL}. It  is rather simple, and,  for the reader's convenience,  we present it next.
\begin{proof}
Fix $\delta>0$ and let $\psi(z):=\min [1,\max(z/\delta,0)]$. Applying the definition to $u$ and $v$ with $\zeta:= \psi(u-v)$ and using the fact that  $G$ is decreasing and $u_-<v_-$ if $v<u$, we find, for some $C>0$, 
$$
\begin{array}{l}
\int_0^t\int_{\Omega} \partial_t(b(u)-b(v))\psi(v-u) dxdt + \frac{C}{\delta}\int_0^t \int_{\Omega} \chi_{\{0<u-v<\delta\}} |D(u-v)|^2dxdt\\ \\   \leq\int_0^t\int_{\Omega} [\chi_{\{0<u-v<\delta\}}(b(u)-\nu) G(u_-) - (b(v)-\nu)G(v_ -)]dxdt\ \\
=\int_0^t \int_{\Omega}[\chi_{\{0<u-v<\delta\}} (b(u)-b(v))G(0) +\nu(-G(u_-) +G(v_-))]dxdt\\  \\
\leq \int_0^t  \int_{\Omega} (b(u)-b(v))_+ G(0) dxdt.
\end{array} 
$$

\no Next we let  $\delta\to 0$. The parabolic term on the left converges to $\int_\Omega (b(u)-b(v))_+(\cdot,t) dx$, and, thus,  Gronwall's lemma yields that $b(u) \leq b(v)$. 
\smallskip

\no Using this last information in the previous inequality, we find that,  for any $\delta>0$, $b(u-v)=0$ in $\{0<u-v<\delta\}$, which, in view of the fact that  $u\leq v$ on the parabolic boundary of $Q_T$, yields the claimed comparison.
\end{proof}
\no The next results assert the existence and uniqueness of regular weak solutions.

\begin{theorem}\label{existence}
Assume that the bounded domain $\Omega$ has Lipschitz boundary. Then there exists a unique regular weak solution $u$ in $Q_T$ for a given boundary data $u_D\in H^1(0,T;H^1(\Omega))$. Moreover, $b(u)$ is continuous.
\end{theorem}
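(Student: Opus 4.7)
The plan is to follow the Alt-Luckhaus scheme \cite{AL}, treating the source term $(b(u)-\nu)G(u^-)$ as a Lipschitz perturbation of the diffusion part. Since $b(s)=s_+$ is $1$-Lipschitz and monotone, and $G$ is smooth, the map $u \mapsto (b(u)-\nu)G(u^-)$ is Lipschitz from $L^2(\Omega)$ into itself, which fits into the framework of \cite{AL}. After reducing to zero lateral data by writing $u = v + u_D$ (using $u_D \in H^1(0,T;H^1(\Omega))$ so that $\partial_t u_D \in L^2(0,T;H^1)$), the first step is to discretize in time with step $h = T/N$ by solving recursively the elliptic problems
\begin{equation*}
\frac{b(u^k) - b(u^{k-1})}{h} - \nu \Delta u^k = (b(u^k) - \nu)\, G((u^k)^-) \text{ in } \Omega, \qquad u^k = u_D(\cdot,kh) \text{ on } \partial\Omega.
\end{equation*}
Existence at each step follows from monotone operator theory: the principal part $w \mapsto b(w)/h - \nu \Delta w$ is maximal monotone and coercive on $H^1$, while the Lipschitz perturbation is absorbed either by choosing $h$ small relative to the Lipschitz constant of the source, or handled by a Schauder/contraction fixed point argument.

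The crucial uniform estimate comes from testing the $k$-th equation with $(b(u^k)-b(u^{k-1}))/h$. Using the identity $(b(u^k)-b(u^{k-1}))(u^k-u^{k-1}) \geq |b(u^k)-b(u^{k-1})|^2$ (valid since $b$ is monotone and $1$-Lipschitz), the Laplacian term telescopes into a discrete $H^1$-energy, the $u_D$ contribution is controlled by $\partial_t u_D \in L^2(0,T;H^1)$, and the Lipschitz source is absorbed by Young's inequality. Summing in $k$ yields
\begin{equation*}
\sum_{k=1}^{N} h \left\|\frac{b(u^k)-b(u^{k-1})}{h}\right\|_{L^2(\Omega)}^2 + \sup_k \|u^k\|_{H^1(\Omega)}^2 \leq C,
\end{equation*}
together with a uniform $L^2(0,T;H^1)$ bound on the piecewise affine interpolant $u_h$. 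Passing $h \to 0$: weak convergence of $u_h$ in $L^2(0,T;H^1)$, weak convergence of $\partial_t b(u_h)$ in $L^2(Q_T)$, and Aubin-Lions yield strong $L^2(Q_T)$-convergence of $b(u_h)$. A Minty-type monotonicity argument identifies the weak limit of $\nabla u_h$ with $\nabla u$ on $\{u>0\}$ and confirms $b(u_h) \to b(u)$ strongly, while the Lipschitz source passes to the limit by dominated convergence.

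Uniqueness of the regular weak solution follows immediately from Theorem~\ref{comparison_AL}: two regular weak solutions $u_1,u_2$ with the same data satisfy $\partial_t(b(u_1)-b(u_2)) \in L^2(Q_T) \subset L^1(Q_T)$, so the comparison principle gives $u_1 \leq u_2$ and, by symmetry, $u_1 = u_2$. For the continuity of $b(u)$, the regularity $\partial_t b(u) \in L^2(Q_T)$ combined with $b(u) \in L^2(0,T;H^1(\Omega))$ (since $u \in L^2(0,T;H^1)$ and $b$ is Lipschitz) yields $b(u) \in C([0,T];L^2(\Omega))$ by a standard interpolation/trace argument; full H\"older continuity in $(x,t)$ follows from De Giorgi-Nash-Moser regularity on the open set $\{u>0\}$, where $b(u)=u$ satisfies a uniformly parabolic equation with bounded right-hand side, while $b(u) \equiv 0$ on $\{u\leq 0\}$, so no jump occurs across the free boundary. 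The main obstacle is the nonlinear passage to the limit in the diffusion term: because $b$ is flat on $\{u<0\}$, Minty's monotonicity trick must be applied with care to identify the weak limit of $b(u_h)$ as $b$ applied to the weak limit $u$, which is precisely the delicate compactness/monotonicity argument carried out in \cite{AL}.
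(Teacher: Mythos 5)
Your uniqueness argument matches the paper exactly (invoke Theorem~\ref{comparison_AL} using $\partial_t b(u)\in L^2\subset L^1$). For existence, you take a Rothe/implicit-time-discretization route whereas the paper refers to the Galerkin scheme underlying Theorem~2.3 of \cite{AL}; both are legitimate and largely interchangeable ways to produce a regular weak solution, so that difference is a matter of taste. Your discrete estimate, obtained by testing the $k$-th step against $(b(u^k)-b(u^{k-1}))/h$ and using $(b(a)-b(c))(a-c)\geq|b(a)-b(c)|^2$, is in the right spirit for extracting $\partial_t b(u)\in L^2(Q_T)$, though you should verify that the term $\nabla u^k\cdot\nabla\bigl(b(u^k)-b(u^{k-1})\bigr)$ actually telescopes favorably, since $\nabla b(u^{k-1})$ need not pair nicely with $\nabla u^k$; the cleaner choice is the standard Alt--Luckhaus test function $u^k-u^{k-1}$, which controls the same quantity while giving the telescoping $H^1$ term directly.

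The genuine gap is in your proof that $b(u)$ is continuous on $Q_T$. You argue: interior De Giorgi--Nash--Moser regularity on the open set $\{u>0\}$, plus $b(u)\equiv 0$ on $\{u\le 0\}$, ``so no jump occurs across the free boundary.'' That last clause is precisely what has to be proved and your two observations do not deliver it. Interior H\"older estimates in $\{u>0\}$ degenerate as one approaches $\partial\{u>0\}$, so they give no information on whether $b(u)$ limits to $0$ there; and the equality $b(u)=0$ on $\{u\le 0\}$ is vacuous as to what happens on the other side. Continuity of $b(u)$ across the degenerate set is exactly the content of DiBenedetto--Gariepy \cite{DG}, which is what the paper cites here, and there is no shortcut to it via phase-by-phase interior regularity. (Your separate remark that $b(u)\in C([0,T];L^2(\Omega))$ is correct but is a much weaker statement than pointwise continuity of $b(u)$ on $\overline{Q_T}$, which is what the theorem asserts and what is later used to deduce uniform convergence of the densities.) To close the gap you should replace the De Giorgi--Nash--Moser/no-jump claim by a direct appeal to \cite{DG}, or reproduce the intrinsic-scaling argument there adapted to the source term.
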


\no  The uniqueness follows from Theorem~\ref{comparison_AL} while the continuity of $b(u)$ is a consequence of  the main result of Di Benedetto and Gariepy \cite{DG}. The existence  is based on the Galerkin approximation and parallels that of Theorem 2.3 in \cite{AL} where we refer for the details.

\smallskip


\no Weak solutions to \eqref{parabolic_elliptic} are more flexible than viscosity solutions with respect to  the range of the boundary data. On the other hand,  we do not see how to prove directly the pointwise convergence of solutions of \eqref{eqn:um} to the solutions of \eqref{parabolic_elliptic} without going through the viscosity solutions framework. Indeed the convergence proof depends on the facts that we allow semi-continuity for viscosity sub- and super-solutions, and that, even with  such low regularity,  there is a  comparison principle. 

\subsection*{Uniqueness and existence of viscosity solutions for bounded domains} We discuss now the uniqueness for the viscosity solutions and we argue separately depending on whether $\Omega$ is bounded or not.. 


\subsection*{Bounded domain} The first step to establish the  uniqueness is that viscosity solutions evolve continuously from the  initial data. This is quantified in terms of the growth in time of the distance between the parabolic phase and $\partial \Omega \setminus \Omega_0$, the initial position, where  $\partial \Omega_0:=\{u_0<0\}$.
\begin{lem}\label{initial0}
Assume \eqref{g}, \eqref{lbc}, \eqref{initial_1} and \eqref{initial_2} and  let $u$ be a viscosity solution of \eqref{limitproblem}. 
There exists some sufficiently small $t_0$ such that, for all   $0<t\leq t_0$,
$$
d(x,\partial\{u_0>0\}) < t^{1/3} \hbox{ for any } x\in \partial\{u(\cdot,t)>0\}.
$$
\end{lem}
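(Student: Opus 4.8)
The plan is to control the motion of the free boundary $\partial\{u(\cdot,t)>0\}$ during a short initial time interval by sandwiching $u$ between radial classical sub- and super-solutions of \eqref{parabolic_elliptic} that are close to $u_0$ and whose positive phase expands at a controlled rate. Since, by Theorem~\ref{thm:cp}, viscosity sub- and super-solutions that are strictly ordered on the parabolic boundary stay ordered, it suffices to build, near each boundary point $x_0\in\partial\{u_0>0\}=\Gamma(u_0)$, explicit barriers that trap the free boundary inside a neighborhood of radius $O(t^{1/3})$.

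First I would localize: fix $x_0\in\Gamma(u_0)$ and, using \eqref{initial_2} (local Lipschitz graph) together with \eqref{initial_1} (so that $|Du_0^+|>0$ and $|Du_0^-|\ge 0$ on $\Gamma(u_0)$ by the elliptic equation and Hopf's lemma), compare $u_0$ from above with a function built from a large ball $B_{R}(y_+)$ containing $\{u_0>0\}$ locally and from below with a small ball $B_{r}(y_-)$ inside $\{u_0>0\}$, both tangent to $\Gamma(u_0)$ at $x_0$; such interior/exterior ball conditions at scale $O(1)$ follow from the Lipschitz graph property after a harmless smoothing. Then I would take as barriers radial classical solutions (or strict sub/super-solutions) $\phi^\pm$ of \eqref{parabolic_elliptic} whose zero level set is a sphere $\partial B_{\rho^\pm(t)}(\cdot)$, and estimate the speed $\dot\rho^\pm$ from the free boundary condition $|D\phi^+|=|D\phi^-|$ together with the elliptic/parabolic equations in the two phases. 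The key quantitative input is that, because $b(u)=u^+$ is only Lipschitz, the normal velocity of a radial free boundary is governed by the jump in $\partial_\eta$ of the two one-phase profiles, and near a point where the parabolic ($u<0$) phase has just been created (or has small gradient) this velocity can blow up, but only like a negative power of the elapsed time or of the gradient — which is precisely what produces the $t^{1/3}$ rather than a linear-in-$t$ bound. Concretely, one solves the ODE for $\rho(t)$ of the schematic form $\dot\rho \lesssim \rho^{-\beta}$ or $\dot\rho\lesssim (\text{gradient of parabolic part})^{-1}$, and the worst case integrates to $|\rho(t)-\rho(0)|\lesssim t^{1/3}$. (Here I would invoke Theorem~\ref{thm:radial}, or rather its radial-barrier construction in Section~\ref{sec:radial}, to guarantee these radial profiles exist as genuine classical sub/super-solutions of \eqref{parabolic_elliptic}.)

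Next I would run the comparison: choosing the radii $\rho^\pm(0)$ slightly larger/smaller than the matching radii of the geometric barriers at $x_0$, and shrinking the time horizon to some $t_0$, one arranges $u^{*}<\phi^+$ and $u>\phi^-$ on the parabolic boundary of a small cylinder $Q'=B_{\sigma}(x_0)\times(0,t_0]$, hence on all of $Q'$ by Theorem~\ref{thm:cp}. This forces $\{u(\cdot,t)>0\}\cap B_{\sigma}(x_0)$ to lie between $B_{\rho^-(t)}(y_-)$ and $B_{\rho^+(t)}(y_+)$, so any point of $\partial\{u(\cdot,t)>0\}$ near $x_0$ is within $C t^{1/3}$ of $\Gamma(u_0)$. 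Covering $\Gamma(u_0)$ by finitely many such neighborhoods (it is compact, $\Omega$ bounded, and $\{u_0<0\}$ contains a neighborhood of $\partial\Omega$ by \eqref{initial_1}), and taking $t_0$ the minimum of the local times and the constant the maximum, gives the stated bound for all $x\in\partial\{u(\cdot,t)>0\}$, $0<t\le t_0$. One should also separately note that no new components of $\{u>0\}$ can nucleate in time $\le t_0$: nucleation requires the parabolic phase $b(u)=u^+$ to rise from a value bounded away from $\nu$ up to $\nu$, and since $(b(u)-\nu)G(0)$ is bounded, this takes time bounded below; shrinking $t_0$ further rules it out, so the only free boundary near time $0$ is the one emanating from $\Gamma(u_0)$.

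\textbf{Main obstacle.} The delicate point is the speed estimate in the two-phase (matching) regime: one must show that the free boundary condition \eqref{no_jump} — a Lipschitz, not smooth, matching — yields a normal velocity that, although possibly unbounded as $t\to 0^+$ (because the parabolic-side gradient may degenerate when $\rho_0$ touches $1$ only tangentially, or when the negative phase is thin), is still integrable with primitive $O(t^{1/3})$. Getting the exponent $1/3$ (rather than, say, $t^{1/2}$) requires tracking carefully how the parabolic-side gradient $|D\phi^-|$ grows away from $0$ — it behaves like $O(\sqrt{t})$ by the heat equation with bounded source, so the velocity is $O(t^{-1/2})$ only in the transient regime where the positive-phase gradient is itself small, and a more careful bookkeeping of both gradients in the radial ODE is what yields $\dot\rho\lesssim t^{-2/3}$ and hence the claim. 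Designing the radial barriers so that their initial data genuinely lie on the correct side of $u_0$ while keeping this sharp speed control is the crux; everything else is the standard localize–compare–cover scheme.
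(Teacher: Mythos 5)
Your overall architecture (barriers plus the comparison principle, interior/exterior balls from the Lipschitz graph \eqref{initial_2}, a covering argument) is the right family of ideas, but the quantitative heart of the lemma --- where the exponent $1/3$ actually comes from --- is missing, and the heuristic you give for it is backwards. You propose to \emph{derive} the normal velocity $\dot\rho^{\pm}$ of a radial barrier from the matching condition \eqref{no_jump} and then integrate an ODE ``$\dot\rho\lesssim t^{-2/3}$''; you never establish such a bound, and the framing is circular for a barrier argument: a super/subsolution does not satisfy the equality $|D\phi^+|=|D\phi^-|$, it only needs the one-sided inequality of Definition~\ref{de:classicalSubsolution}, so one is free to \emph{prescribe} the interface motion and then verify that inequality. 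That is exactly what the paper does, globally rather than locally: it sets $\Sigma(t):=\{x: d(x,\Omega\setminus\Omega_0)\le \e+t^{1/3}\}$, solves the heat equation for $w^+$ in the moving region with zero data on $\partial\Sigma(t)$ and the elliptic equation for $w^-$ outside, and checks that $w=w^+-w^-$ is a supersolution. The point of the exponent is that $t^{1/3}\gg t^{1/2}$ as $t\to0^+$, i.e.\ the prescribed boundary moves \emph{super-diffusively}; a shifted heat-kernel comparison then forces $|Dw^+|$ on $\partial\Sigma(t)$ to be exponentially small, while the exterior ball condition of $\Sigma(t)$ gives a Hopf-type lower bound $|Dw^-|\gtrsim \e+t^{1/3}$ on the elliptic side. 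The free-boundary inequality is then automatic for small $t$, with no ODE and no tracking of a ``transient regime.''

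Your ``main obstacle'' paragraph also contains a sign error in the logic of the exponent: for $0<t<1$ one has $t^{1/2}<t^{1/3}$, so a displacement bound of order $t^{1/2}$ would be a \emph{stronger} conclusion than the stated $t^{1/3}$, not a weaker one that must be improved by ``careful bookkeeping.'' The value $1/3$ is not sharp and is not extracted from the Lipschitz matching of $b(u)=u^+$; any exponent $\alpha\in(0,1/2)$ works by the same exponential-smallness mechanism, whereas the diffusive rate $\alpha=1/2$ is precisely where the argument would become delicate. So the gap is concrete: you must either (i) prescribe the barrier interface at a super-diffusive rate and verify the gradient inequality as above, or (ii) genuinely prove the velocity bound you assert, which you have not done. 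As written, the proposal does not close the lemma. (Your closing remark about ruling out nucleation of new positive components is unnecessary here --- the barrier comparison already controls the whole set $\partial\{u(\cdot,t)>0\}$, and in any case it is the \emph{negative} phase that nucleates in this problem.)
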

\begin{proof}
To prove the claim we construct suitable super- and sub-solution barriers. We begin with the former. 

\no Fix $\e>0$ and let  $\Sigma(t)$ be the 
$$
\Sigma(t) :=\{x: d(x, \Omega \setminus \Omega_0) \leq \e+t^{1/3}\}.
$$
\no Let $w^+$ and $w^-$ be respectively the solutions to   
\begin{equation*}
\begin{cases}
w^+=\Delta w^+  \ \text{ in } \  \{(x,t): x\in \Sigma(t), \ 0\leq t\leq 1\},\\[1mm]
w^+=\rho_L \ \text{ on } \ \partial \Omega \times [0,1],\\[1mm]
w^+=0 \ \text{ on }   \  \{(x,t): x\in\partial\Sigma(t), \ 0\leq t\leq1\},\\[1mm]
w^+(\cdot,0)=(u_0)_+ \ \text{ on } \  \Sigma(0),  
\end{cases}
\end{equation*}
and to the elliptic equation in \eqref{parabolic_elliptic} set in the complement of $\Sigma(t)$ with zero boundary data, and define $w:= w^+ - w^-$.

\no Since $\Sigma(t)$ has the exterior ball condition, for small $t>0$, we have 
$$
|Dw_-|(\cdot,t) \geq \e+t^{1/3} \hbox{ on } \partial\Sigma(t).
$$ 

\no On the other hand, it follows, using as barriers, if necessary,  shifted versions of heat kernel as barriers,   that the restriction of $|Dw_+|$ on $\partial\Sigma(t)$ converges exponentially fast to zero as $t \to 0$.  

\no Hence,  we can choose a sufficiently small $t_0>0$ such that, for all  $ 0< t\leq t_0,$
$$
|Dw_+|(\cdot,t) <  |Dw_-|(\cdot,t) \hbox{ on } \partial\Sigma(t). 
$$

\no It follows that  $w$ is  as a supersolution for \eqref{parabolic_elliptic}. This yields that $\partial\{u(\cdot,t)>0\}$ cannot expand faster  than order of $t^{1/3}$. 

\no Similarly one can construct a subsolution barrier, based on the fast-decreasing set 
$$
\tilde{\Sigma}(t):=\{x: d(x,\Omega_0) \geq t^{1/3}\}.
$$
\no Since the arguments are similar we omit them.
\hskip-.125in
\end{proof}

\no Using the  weak theory described above, we can prove, using arguments similar to the ones in Section~5.1 of \cite{KP}, the following theorem.
\begin{theorem}\label{unique}
Assume \eqref{g}, \eqref{lbc}, \eqref{initial_1} and \eqref{initial_2}. There exists a unique viscosity solution $u$ to  \eqref{limitproblem}. Moreover, $u=(u^*)_*$, and $u$ coincide a.e. with the regular weak solution $u$ of \eqref{parabolic_elliptic}  in $Q_T$. In particular, $b(u)$ is continuous.
\end{theorem}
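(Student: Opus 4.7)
The plan is to combine Perron's method for the viscosity formulation with the uniqueness theory for regular weak solutions given by Theorem \ref{existence}. Define the Perron envelopes
$$\underline u := \sup\{\psi : \psi \text{ is a classical subsolution of \eqref{parabolic_elliptic} lying below the prescribed data}\}$$
and dually $\overline u$ as the infimum of classical supersolutions lying above the data. The strict comparison principle, Theorem \ref{thm:cp}, applied after small perturbations of competing test functions (e.g.\ slight space--time shifts producing strict separation on the parabolic boundary), yields $\underline u \leq \overline u$, and the standard Perron calculus shows that $(\underline u)^*$ is a viscosity subsolution while $(\overline u)_*$ is a viscosity supersolution. Lemma \ref{initial0} guarantees that these envelopes attain the correct initial datum in the appropriate semicontinuous sense.

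Next I would squeeze both envelopes against the unique regular weak solution $u_w$ provided by Theorem \ref{existence}. On the one hand, $u_w$ can be approximated from below and from above by classical sub- and super-solutions of \eqref{parabolic_elliptic} via time mollification, exploiting the continuity of $b(u_w)$ together with $\partial_t b(u_w) \in L^2(Q_T)$; these approximants are admissible in the Perron classes, so $\underline u \geq u_w$ and $\overline u \leq u_w$ almost everywhere. On the other hand, any viscosity solution $u$ of \eqref{limitproblem} is trapped between $(\overline u)_*$ and $(\underline u)^*$ by direct application of Theorem \ref{thm:cp} to perturbed classical test functions, and therefore coincides with $u_w$ a.e. The positive phase $u^+=b(u)$ is then determined everywhere by the continuity of $b(u_w)$ (Theorem \ref{existence} together with \cite{DG}), while the negative phase is pinned down pointwise by the elliptic equation $-\Delta u = G(u^-)$ in $\{u<0\}(t)$ with zero data on $\partial\{u \geq 0\}(t)$. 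Selecting $u = (u^*)_*$ identifies the unique $BLSC$ representative.

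The main obstacle is the bridge between the pointwise (viscosity) and distributional (weak) formulations, specifically the approximation of $u_w$ by classical sub- and super-solutions of \eqref{parabolic_elliptic}. The free boundary $\partial\{u_w \geq 0\}$ must be smoothed so as to upgrade the weak no-jump condition \eqref{no_jump} into the strict gradient inequality (iv) of Definition \ref{de:classicalSubsolution}. A small inflation of the positive phase together with a compatible tilt of the interior data, combined with parabolic and elliptic regularity away from the free boundary, produces the required classical sub-/super-solutions; this closely parallels the construction in Section 5.1 of \cite{KP}, with the additional bookkeeping coming from the source term $(b(u)-\nu)G(u^-)$ handled by the monotonicity of $G$ exactly as in the proof of Theorem \ref{comparison_AL}. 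Once these approximations exist, the squeeze argument collapses $\underline u$, $\overline u$ and any viscosity solution onto $u_w$ a.e., and the continuity of $b(u)$ is inherited directly from Theorem \ref{existence}.
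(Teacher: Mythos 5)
Your overall plan — squeeze the viscosity sub- and super-solutions against the unique regular weak solution of Theorem~\ref{existence} — is the right high-level idea, and matches the spirit of the paper's citation to Section~5.1 of \cite{KP}. But the mechanism you propose for the squeeze has the regularization going in the wrong direction, and this produces a genuine gap.

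You propose to approximate the regular weak solution $u_w$ from above and below by \emph{classical} sub- and super-solutions in the sense of Definition~\ref{de:classicalSubsolution}, and then use these as Perron competitors. The problem is that a classical sub/super-solution in this definition is a very rigid object: $\vp$ must lie in $C^{2,1}_{x,t}$ on the closure of each phase, its zero set must be a common boundary on which both one-sided gradients are nonvanishing, and the \emph{strict} gradient inequality $\abs{D\vp^+}\gtrless\abs{D\vp^-}$ must hold there. All that is known about $u_w$ is $u_w\in L^2(0,T;H^1)$, $\partial_t b(u_w)\in L^2(Q_T)$, and continuity of $b(u_w)$; there is no control whatsoever over the geometry of $\partial\{u_w\ge 0\}$, which could a priori be extremely irregular, have flat pieces, or carry degenerate gradients. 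Time mollification smooths $u_w$ in $t$ but does nothing to regularize the free boundary in space, and "a small inflation of the positive phase together with a compatible tilt" presupposes precisely the boundary regularity one does not have. In other words, this step is not a technical loose end — it is the whole difficulty, and the proposed repair does not address it.

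The argument the paper has in mind (and that \cite{KP}~Section~5.1 carries out) regularizes the \emph{viscosity} sub- and super-solutions, not the weak solution. One takes the space–time sup- and inf-convolutions $Z=\sup_{\Xi_r(x,t)}u$ and $W=\inf_{\Xi_r(x,t)}v$ as in the Appendix; these have semiconvexity/semiconcavity and interior/exterior "cylinder" properties by construction, and one shows that they are \emph{weak} sub/super-solutions in the sense of \eqref{weak}. Once $Z$ and $W$ are recognized as weak sub/super-solutions, the weak comparison principle (Theorem~\ref{comparison_AL}) compares them with $u_w$ and gives $u\le u_w\le v$ a.e.\ after letting $r\to 0$, with no need to regularize $\partial\{u_w\ge 0\}$. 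Combined with Theorem~\ref{thm:cp} (to order $u$ and $v$ with each other) and Lemma~\ref{initial0} (to control the initial layer), this forces $u^*=(u_w)^*$ and $u_*=(u_w)_*$, yielding uniqueness, the identity $u=(u^*)_*$, and the a.e.\ coincidence with $u_w$; continuity of $b(u)$ then comes from Theorem~\ref{existence} and \cite{DG} as you say. A secondary, smaller issue in your write-up: in the Perron step you take the supremum over \emph{classical} subsolutions, but the Perron family here must be the viscosity subsolutions (with a bump argument to show the envelope is also a supersolution); restricting to classical competitors does not obviously produce the maximal viscosity subsolution in this free-boundary setting.
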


\subsection*{Unbounded domain $\Omega=\R^n$}
%
It can be checked easily,  using a suitable radial barrier,  that the negative phase of any viscosity solution of \eqref{parabolic_elliptic} with initial data $u_0$ satisfying \eqref{initial_1}, \eqref{initial_2} and \eqref{cond_unbdd} is contained,  for $0\leq t\leq T$, in $B_{R_1}(0)$ with $R_1=R_1(T)$.  
\smallskip

\no Let $u_n$ be the viscosity solution of  \eqref{limitproblem} in $B_{R_1+n}(0)\times [0,T]$ with initial data $u_0\chi_{\{|x|\leq R_1+n\}}$ and boundary data  $-\nu-\frac{1}{n}$ on $\partial B_{R_1+n}(0) \times [0,T]$ respectively. It is immediate that the $u_n$'s  are monotonically  increasing with respect to $n$, and, thus,  converge pointwise to some $w:\R^n \times [0,T] \to \R$. Furthermore, the $u_n$'s  solve a uniformly parabolic equation outside of $B_{R_1}$, and, thus, are  smooth in $\{R_1 \leq |x|\leq R_1+n\}\times (0,T]$. We may, therefore, conclude  that the convergence of the  $u_n$'s  to $w$ is locally uniform in $\{|x|\geq R_1+1\}\times [0,T)$ and, moreover, that  $w$ is smooth , since it solves a parabolic equation in $\{|x|\geq R_1+1\}\times [0,T)$. 

\no Now we can use standard stability arguments about viscosity solutions to show that $w$ is the unique viscosity solution of \eqref{limitproblem} in $\{|x|\leq R_1+1\}\times [0,T]$ with the initial and boundary data $u_0$ and $w$ on $\{|x|= R_1+1\}\times [0,T]$ respectively. It follows that $w$ is a viscosity solution of \eqref{parabolic_elliptic} in $\R^n\times (0,T]$.

\begin{theorem}\label{uniqueness_unbounded}
Assume that $u_0$ satisfies \eqref{g},  \eqref{initial_1}, \eqref{initial_2} and \eqref{cond_unbdd}. Then  \eqref{parabolic_elliptic}  has a unique viscosity solution in $\R^n\times (0,T]$ with initial datum $u_0$. 
\end{theorem}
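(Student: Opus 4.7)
The existence portion is largely completed by the construction preceding the theorem statement. The $u_n$ are increasing in $n$ and converge pointwise to $w$; locally uniform convergence holds outside $B_{R_1+1}$ where the equation is uniformly parabolic. To verify that $w$ is a viscosity solution of \eqref{parabolic_elliptic} on $\R^n\times(0,T]$, I would apply the standard half-relaxed limits method: the upper- and lower-semicontinuous envelopes of $\lim u_n$ are respectively a viscosity sub- and super-solution of \eqref{parabolic_elliptic} on $\R^n\times(0,T]$, and an interior version of Theorem \ref{thm:cp} (applied on nested bounded cylinders, where strict separation on the parabolic boundary can be arranged by small perturbation because each $u_n$ coincides with a regular weak solution with continuous $b(u_n)$ by Theorem \ref{existence}) forces them to coincide with $w$. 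Uniform attainment of the initial datum follows from Lemma \ref{initial0} applied to each $u_n$ with $n$-independent constants.

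For uniqueness, let $u$ be any viscosity solution of \eqref{parabolic_elliptic} on $\R^n\times(0,T]$ with initial data $u_0$. The first step is to confine the negative phase: a radial super-solution from Theorem \ref{thm:radial}, with initial negative phase enclosing $\{u_0<0\}\subset B_R$, shows that $\{u(\cdot,t)<0\}\subset B_{R_2}(0)$ for all $t\in[0,T]$ and some $R_2=R_2(T)$. Outside $B_{R_2}$ the function $u$ then solves the linear parabolic equation $u_t-\nu\Delta u=(u-\nu)G(0)$ classically; combining this with $u_0-\nu\in L^1(\R^n)$ from \eqref{cond_unbdd} and Duhamel's formula yields the uniform decay $u(x,t)\to\nu$ as $|x|\to\infty$, uniformly in $t\in[0,T]$.

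With this decay established, the inequality $w\leq u$ follows by comparing the approximants $u_n$ to $u$ on $B_{R_1+n}(0)\times[0,T]$ via Theorem \ref{thm:cp}: for large $n$ one has $u_0\chi_{\{|x|\leq R_1+n\}}\leq u_0$ on the initial slice by \eqref{cond_unbdd}, while on $\partial B_{R_1+n}$ the lateral data $-\nu-1/n$ lies strictly below $u$ (which is close to $\nu$ there). Passing $n\to\infty$ gives $w\leq u$. For the reverse inequality, I would build a decreasing sequence $\bar u_n$ of viscosity solutions on $B_{R_1+n}(0)\times[0,T]$ with initial data equal to $u_0$ on $B_R$ extended by $\nu$ elsewhere and lateral data $\nu-1/n$; a symmetric comparison argument then gives $u\leq\bar u_n$. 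The main obstacle is to verify $\lim\bar u_n=\lim u_n=w$: by Theorem \ref{unique} both sequences coincide with the regular weak solutions of the corresponding bounded problems, so applying Theorem \ref{comparison_AL} on interior cylinders $B_{R_1+k}(0)\times[0,T]$ with $k\ll n$ bounds $\bar u_n-u_n$ there by the oscillation of these solutions on $\partial B_{R_1+k}$, which vanishes by the uniform decay of $u-\nu$ at infinity established above. The nonlinearity of the source $(b(u)-\nu)G(u^-)$ and the possibility of nucleation in the elliptic phase prevent naive shift arguments here, so passing through the weak-solution theory of \cite{AL} is essential; once the pinching $\bar u_n-u_n\to 0$ is achieved, we conclude $u=w$.
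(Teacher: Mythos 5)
Your overall strategy diverges from the paper's, and the pinching step contains a genuine gap. The paper's proof is more direct: it observes that any viscosity solution $u$ with the stated initial data has its non-positive phase confined in a compact set (via a radial barrier for \eqref{parabolic_elliptic} itself, not via Theorem \ref{thm:radial}, which concerns approximation of \eqref{parabolic_elliptic}-solutions by sub/super-solutions of the $m$-problem), hence solves the linear parabolic equation classically for large $|x|$; restricting $u$ to the bounded cylinder $\{|x|\le R_1+1\}\times(0,T]$ makes it a viscosity solution with smooth lateral data, so by Theorem \ref{unique} it coincides a.e.\ with the regular weak solution there. Combined with the decay at infinity this gives $\partial_t b(u)\in L^1(\R^n\times(0,T))$, and then the \emph{proof} of Theorem \ref{comparison_AL} (not just its statement, since that theorem is on bounded domains) applies on $\R^n$ to give uniqueness directly, with no sandwiching required.

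Your sandwiching $u_n\le u\le \bar u_n$ is fine as far as it goes, but the pinching $\bar u_n - u_n\to 0$ is the crux and it is not justified by what you cite. Theorem \ref{comparison_AL} is a qualitative comparison principle: it shows $u\le v$ in the interior given $u\le v$ on the parabolic boundary, and it does \emph{not} give an $L^\infty$ stability estimate bounding $\bar u_n-u_n$ by its lateral boundary oscillation. Trying to fabricate such a bound by shifting one solution by a constant fails exactly because of the nonlinearity you flag: in the elliptic phase the source is $-\nu G(u^-)$, so a constant shift destroys the sub/super-solution property. The Alt--Luckhaus proof yields an $L^1$-type contraction on $b(u)$, which is the wrong topology and the wrong quantity for the $L^\infty$ pinching you need. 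Until this step is replaced by something rigorous — e.g.\ the paper's identification of $u$ itself as the regular weak solution so that the comparison applies to $u$ directly rather than through a two-sided approximation — the proof of uniqueness is incomplete.
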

\begin{proof}
\no The barrier argument described above yields that  any viscosity solution $u$ to  \eqref{parabolic_elliptic} in $\R^n\times (0,T]$, with initial condition satisfying the assumptions in the statement and has, in addition,  compact non-positive phase,  stays positive outside of a compact set, and,  thus,  solves the parabolic equation in \eqref{parabolic_elliptic} in $\{|x|\geq R\}\times (0,T]$ for some $R>0$. 
\smallskip

\no In particular,  it follows that $u$ and $b(u)$ are smooth when $|x|$ is large, and $b(u)-\nu \in  L^\infty(0,T;L^1(\R^n))$. 

\no Moreover, $u$ is a viscosity solution of \eqref{parabolic_elliptic} in $\{|x|\leq R_1+1\}\times (0,T]$ with smooth lateral boundary data. It follows that $u$ is also the unique regular weak solution of \eqref{parabolic_elliptic} given by Theorem~\ref{existence}. 
\smallskip

\no This last claim  and the decay of $u$ for large $|x|$ imply that 
$
\partial_t b(u) \in L^1(\R^n\times (0,T)).
$
 The proof of Theorem~\ref{comparison_AL}  yields that $u$ is the unique viscosity solution of \eqref{parabolic_elliptic} with initial data $u_0$.
\end{proof}


\section{The construction of radial barriers and the proof of Theorem~\ref{thm:radial}}
\label{sec:radial}
\no We prove here Theorem~\ref{thm:radial}.  Let  $\phi$ be  a radial classical subsolution to  \eqref{parabolic_elliptic}, as defined in Definition~\ref{de:classicalSubsolution} in the domain $\{r_1\leq|x|\leq r_2\}\times [0,T]$, where $0<r_1<r_2\leq \infty$, and,   to simplify,  we  assume that the elliptic phase is contained in $B_{r_1}$. 
\smallskip

\no We recall that this means that $\phi$ is smooth in its positive and negative phase and, for some $a\in C^1([0,T];\R)$, 
$$
\left\{\begin{array}{lll}
-\Delta\phi ^-\leq G(\phi^-) &\hbox{ in }& \{\phi<0\}=\{(x,t):r_1<|x|<a(t)\},\\[1mm] 
\phi_t - \nu\Delta\phi \geq (\phi -\nu)G(0)  &\hbox{ in }& \{\phi>0\} =\{(x,t):a(t)<|x|<r_2\},\\[1mm]
0<|D\phi^+| <|D\phi^-| &\hbox{ on }& \{(x,t):|x|=a(t)\}.
\end{array}\right.
$$
\no We will perturb $\phi$ to construct subsolutions to  \eqref{eqn:um}. It is, however, important to remark that these subsolutions do not have to be, actually will not be, smooth across their interface. Discontinous subsolutions to the equation \eqref{pme} can be defined in the viscosity sense in a fashion similar to Definition~\ref{de:viscositySubsolution}
\smallskip

\no To keep things simple, we assume that (i) we have equality instead of inequality in the elliptic and parabolic equation in each phase of $\phi$, and (ii) 
the  domain is $\R^n\times [0,\infty)$ and the positive (elliptic) phase of $\phi$ is $\{|x|<a(t)\}$, that is we assume that there is no $r_1$ and $r_2 = \infty$. A minor modification of the arguments presented below yields  the general case.

\medskip

\no We introduce the notation   
$$p_0:=\phi_-\hbox{ and } \rho_0:= 1+\nu^{-1}\phi_+,
$$
and note that, for each $t\in (0,T]$,  $p_0(\cdot,t)$ solves 
\begin{equation}\label{p_0}
\left\{\begin{array}{lll}
-\Delta p_0 =G(p_0) &\hbox{ in }& \{ |x| < a(t)\},\\
p_0 = 0 & \hbox{ on }& \{|x|= a(t)\},
\end{array}\right.
\end{equation}
while  $\rho_0$ satisfies 
\begin{equation}\label{rho_0}
\rho_t - \nu\Delta \rho = \rho G(0) \hbox { in } \{(x,t): |x|>a(t)\}
\end{equation}
with the free boundary condition
\begin{equation}\label{boundary}
|D\phi^-|=\nu|D\rho_0|>|Dp_0| = |D\phi^+| \quad \hbox{ on } \{(x,t): |x|=a(t)\}.
\end{equation}

\no Our first claim is:

\begin{lem}\label{lemma:2}
There exist subsolutions $u_m$ to \eqref{eqn:um} which converge, as $m\to\infty$, uniformly to $\phi$, and, hence,  there exist viscosity subsolutions $\rho_m$ of \eqref{pme} such that, as $m\to \infty$,  the $\rho_m$'s   and the associated  pressure variables $p_m$'s  converge uniformly  to $\rho_0$ and $p_0$ respectively.
\end{lem}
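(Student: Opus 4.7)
The plan is to perturb $\phi$ slightly in each of its two phases and glue the pieces across the interface, exploiting the strict gradient ordering $\nu|D\rho_0|>|Dp_0|$ from \eqref{boundary}. The radial symmetry is essential because it reduces every PDE computation to a one-dimensional calculation in $r=|x|$, so constructing the approximants and checking the inequalities amount to manipulating ODEs. The singular structure of \eqref{eqn:um} dictates the ansatz in each phase: on the inner ``tumor'' side, where $\rho_m$ is close to $1$, one has $b_m'(u_m)=\nu/(m\rho_m^{m-1}+\nu)=O(1/m)$, so the time derivative in \eqref{eqn:um} is negligible and the equation degenerates into the elliptic relation $-\Delta p_m=G(p_m)$ for $p_m=-u_m$; on the outer ``pre-tumor'' side, where $\rho_m$ stays bounded below $1$, $\rho_m^{m-1}$ is exponentially small, $b_m(u_m)\approx u_m$, and \eqref{eqn:um} reduces to the parabolic relation $u_{m,t}-\nu\Delta u_m\leq (u_m-\nu)G(0)$ that $\tilde\rho=\nu(1-\rho)$ satisfies.

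Concretely, I would fix small shifts $\eta_m,\varepsilon_m\to 0^+$ and set $a_m(t):=a(t)-\eta_m$. On the inner region $\{|x|\leq a_m(t)\}$, let $p_m^{\mathrm{in}}(\cdot,t)$ be the positive radial solution of $-\Delta q=G(q)$ in $B_{a_m(t)}$ with $q=0$ on the boundary, define $\rho_m^{\mathrm{in}}:=(\tfrac{m-1}{m}p_m^{\mathrm{in}})^{1/(m-1)}$ so that the associated pressure variable is exactly $p_m^{\mathrm{in}}$, and set $u_m^{\mathrm{in}}:=-(\rho_m^{\mathrm{in}})^m+\nu(1-\rho_m^{\mathrm{in}})$. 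On the outer region $\{|x|\geq a_m(t)\}$, let $\tilde\rho_m(\cdot,t)$ solve $\rho_t-\nu\Delta\rho=\rho G(0)$ with inherited initial data and Dirichlet value $\tilde\rho_m=1-\varepsilon_m$ on $\{|x|=a_m(t)\}$, where $\varepsilon_m$ is tuned so that $u_m^{\mathrm{in}}$ and $u_m^{\mathrm{out}}:=-\tilde\rho_m^{\,m}+\nu(1-\tilde\rho_m)$ match continuously at the interface. Finally define $u_m$ as the piecewise combination of $u_m^{\mathrm{in}}$ and $u_m^{\mathrm{out}}$ (with the appropriate min/max choice dictated by monotonicity so that $u_m\leq\phi$ in the viscosity sense) and recover $\rho_m$ from $u_m=-\rho_m^m+\nu(1-\rho_m)$.

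The subsolution inequality is then checked phase by phase by a direct computation from the equations satisfied by $p_m^{\mathrm{in}}$ and $\tilde\rho_m$, with error terms of order $1/m$, $\eta_m$, and $\varepsilon_m$ that I can absorb by choosing $\eta_m,\varepsilon_m$ to decay more slowly than $1/m$. The main obstacle is the matching at $\{|x|=a_m(t)\}$: the functions $u_m^{\mathrm{in}}$ and $u_m^{\mathrm{out}}$ both vanish there but have different radial derivatives, and the crucial step is to show that the strict inequality in \eqref{boundary} propagates, for $m$ large, to the corresponding inequality between the one-sided radial gradients of $u_m^{\mathrm{in}}$ and $u_m^{\mathrm{out}}$. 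This is exactly the Caffarelli--V\'azquez jump criterion that makes the glued function a viscosity subsolution of \eqref{eqn:um}: no smooth test function touching $u_m$ at an interface point can avoid violating the ordering of the normal derivatives. Uniform convergence $u_m\to\phi$ then follows from the uniform convergence in each phase and the collapse of the $O(\eta_m)$ transition layer, and the convergences $\rho_m\to\rho_0$ and $p_m\to p_0$ are immediate from the monotone continuous relation $u_m=-\rho_m^m+\nu(1-\rho_m)$.
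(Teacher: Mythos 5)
Your skeleton is right: perturb in each phase, glue at the interface, and use the strict gradient gap from \eqref{boundary} to make the discontinuity in derivatives harmless in the viscosity sense. This is indeed what the paper does. But your error analysis and your interface matching both have genuine gaps, and the paper's proof contains a forcing device that is precisely designed to close the first one.

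\smallskip

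\noindent\textbf{The $O(1)$ time derivative in the elliptic phase.}
You claim the subsolution inequality holds up to error terms of order $1/m$, $\eta_m$, $\varepsilon_m$. This is false because of $\partial_t$ of the inner piece. Writing the subsolution inequality for $u_m=-u_{m,1}$ as
$(u_{m,1})_t-(m p_{m,1}+\nu)\Delta u_{m,1}\ge(m p_{m,1}+\nu)\rho_{m,1}G(p_{m,1})$,
the choice $-\Delta p_m^{\mathrm{in}}=G(p_m^{\mathrm{in}})$ in the moving ball $B_{a_m(t)}$ makes the spatial and reaction terms cancel (to leading order), leaving only $(u_{m,1})_t$. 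Since the domain moves with speed $a'(t)$ and $|Dp_0|>0$ at the free boundary, $(u_{m,1})_t=|Dp_0|(a(t))\,a'(t)+o(1)$ near $r=a(t)$, which is $O(1)$ and can have the wrong sign (whenever the interface is retreating). Crucially, the prefactor $b_m'(u_m)=\nu/(mp_m+\nu)$ does \emph{not} kill this term near the interface, because there $p_m\to 0$ and $b_m'\to 1$. Your shifts $\eta_m,\varepsilon_m$ do not create any bulk slack to absorb this. The paper fixes this by solving instead
$-\Delta\tilde u_m=G(\tilde p_m)+f_m(|x|)$
with the two-scale forcing
$f_m(r)=A_0\nu^{-1}\chi_{\{p_0\le m^{-1/3}\}}(r)+m^{-1/3}$.
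The $A_0$ piece supplies a fixed amount of slack in the thin annulus near the interface where $m p_{m,1}$ is not large and only $\nu$ multiplies $\Delta u_{m,1}$; the $m^{-1/3}$ piece, once multiplied by $m p_{m,1}\gtrsim m^{2/3}$, diverges and dominates the $O(1)$ time derivative away from the interface. (A matching $\pm m^{-1/2}$ forcing in the parabolic phase serves a similar but easier role, absorbing the $O(m^{-1})$ error from $\Delta(\hat\rho_m^m)$.) Without such a forcing your construction has no room to accommodate $(u_{m,1})_t$.

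\smallskip

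\noindent\textbf{Matching at the interface.}
Your formula $\rho_m^{\mathrm{in}}=(\tfrac{m-1}{m}p_m^{\mathrm{in}})^{1/(m-1)}$ forces $\rho_m^{\mathrm{in}}=0$ and hence $u_m^{\mathrm{in}}=\nu$ on $\{|x|=a_m(t)\}$, whereas $u_m^{\mathrm{out}}$ there is $-(1-\varepsilon_m)^m+\nu\varepsilon_m$, which is small for any $\varepsilon_m\to 0$; no admissible choice of $\varepsilon_m$ produces a continuous match, and an unconditioned min/max would move the effective interface and destroy the phase structure. The paper avoids this by shifting down: it sets $u_{m,1}=\tilde u_m-c_m$ with $c_m=\Psi(\nu/m^3)$, so that $u_m$ equals $c_m>0$ (not $\nu$) on $\{|x|=a(t)\}$, the pressure there is $\nu/m^3$, and the corresponding density $\Phi^{-1}(c_m)=(\nu/m^3)^{1/(m-1)}\to 1$ matches the inherited boundary value for $\hat\rho_m$ from the outer side. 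That shift is not cosmetic; it is what puts the density in the right range $\rho_m\approx1$ across the interface, makes the gradient comparison $|Du_{m,1}|<|Du_{m,2}|$ meaningful, and makes $\hat\rho_m^{m-1}\le\nu/m^3$ on the whole parabolic region so that $\Delta(\hat\rho_m^m)=O(m^{-1})$.

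In short, the two-phase gluing idea is correct, but the proof really hinges on (i) a carefully tuned forcing term in the elliptic phase to dominate the $O(1)$ time derivative of the moving-domain elliptic solution, and (ii) a small vertical shift $c_m$ in $u_m$ that keeps $\rho_m$ near $1$ at the interface. Both are absent from your proposal.
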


\begin{proof}

Consider the one-to one functions
\begin{equation}\label{11}
\Phi(\rho):=-\rho^m + \nu(1-\rho_m)   \ \text{and} \  \Psi(p):=\Phi((\frac{m-1}{m} \rho)^{1/m-1})
\end{equation}
and recall from the introduction that 
\begin{equation}\label{above}
u_m=\Phi(\rho_m)= \Psi(p_m).
\end{equation}

%
\no It is then immediate that, if $p_m$ is bounded, then  
\begin{equation}\label{comparison}
u_m =-p_m +O(m^{-1}\ln m) \ \text { if } \ p_m \geq m^{-1}\ln m.
\end{equation}

\no The heuristics  in the introduction suggest as a possible way to prove the claim  to perturb $p_0$ and $\nu(\rho_0-1)$ from the regions $\{p_0>0\} = \{|x|<a(t)\}$ and $\{\rho_0<1\} = \{|x|>a(t)\}$ and patch them together to construct a supersolution $u_m$ of \eqref{eqn:um}, which then would imply that $\rho:=\Phi^{-1}(u)$ is a supersolution of \eqref{pme}.

\smallskip

\no The barrier will be of the form
$$
u_m(x,t) = u_m(|x|,t) =  \left\{\begin{array}{lll}
-u_{m,1}(|x|,t) &\hbox{ in }& \{|x|<a(t)\},\\[1mm]
                       u_{m,2}(|x|,t) &\hbox{ in }& \{|x|>a(t)\},\\
                              \end{array}\right.
$$

\no where $-u_{m,1}$ and $u_{m,2}$ are classical subsolutions to \eqref{eqn:um} in their respective regions and at $r=a(t)$ satisfy $|Du_1| <|Du_2|$. This inequality, which  follows from \eqref{boundary}, will prevent any smooth functions from crossing $u_m$ from below at $\{r=a(t)\}$. It follows  that $u_m$ is a viscosity supersolution of \eqref{eqn:um} in the entire domain.

\smallskip

\no We begin with  $u_{1,m}$. Let   $\tilde{u}_m(\cdot,t)$  solve 
$$
-\Delta \tilde{u}_m (\cdot,t) = G(\tilde{p}_m) + f_m(|x|) \ \hbox{ in } \{|x|<a(t)\} \ \hbox{ and } \  \tilde{u}_m(a(t),t) =0,
$$
where  
\begin{equation}\label{supersolution}
f_m(r):=A_0\nu^{-1}\chi_{\{p_0 \leq m^{-1/3}\}}(r) + m^{-1/3}  \ \text{and }  \  \tilde{p}_m:= \Psi^{-1}(-\tilde{u}_m),
\end{equation}
with  $A_0>0$ an   independent of $m$ sufficiently large constant to be determined below, and observe that, in view of the form of the equation above,
$\tilde{u}_m$ is spatially radial and
$$\tilde{u}_m(x,t)=\tilde{u}_m(\frac{x}{a(t)},1).$$

\smallskip

\no Note that,   as $m\to\infty$, $f\to 0$  in $L^1([0,a(t)])$, since $|Dp_0|(a(t))\neq 0$. Using that  $\tilde{p}$ is radial,  we find that, as $m\to\infty$,  $\tilde{u} \to p_0$ in the  $C^1$-norm in $\{|x|\leq a(t)\}$. Since $|Dp_0|>0$ and $\tilde{u}=p_0=0$ at $|x|=a(t)$, it then follows that
$$
p_0(x)=\tilde{u}(x) +o(|x-a(t)|)\hbox{ near }|x|=a(t).
$$
In particular, in view of   \eqref{comparison},  we have
\begin{equation}\label{observation}
f(r) \geq A_0\nu^{-1}\chi_{\{\tilde{u}\leq \frac{2}{3}m^{-1/3}\}}(r) + m^{-1/3} \geq A_0\nu^{-1}\chi_{\{\tilde{p} \leq \frac{1}{2} m^{-1/3}\}}(r)+m^{-1/3}.
\end{equation}

\no Next we  define
$$
u_{m,1}:= \tilde{u}_m - c_m \  \hbox{with } c_m := \Psi(\nu/m^3),
  $$  
  
\no and remark that, for any $k>1$, 
  \begin{equation}\label{est:c_m}
  \frac{1}{m}\ln m \leq c_m \leq \frac{1}{m^k}. 
  \end{equation}
  
\no The aim is  to show that, for  $u_{m,1}$, $\rho_{m,1}:=\Phi^{-1}(-u_{m,1})$ and $p_{m,1}:= \Psi^{-1}(-u_{m,1})$, 
  \begin{equation}\label{eqn:u_1}
(u_{m,1})_t - (mp_{m,1}+\nu) \Delta u_{m,1} \geq (mp_{m,1}+\nu)\rho_1 G(p_{m,1}) \ \hbox{ in } \  \{(x,t):|x|<a(t)\} ,
  \end{equation}
which yields that  $u_m=-u_{m,1}$ is  a subsolution of \eqref{eqn:um} in the region $\{(x,t):|x|<a(t)\}$.

\medskip

\no To show \eqref{eqn:u_1}, note that, since $u_{m,1}(x,t) =u_{m,1}(\frac{x}{a(t)},1)$,  it follows that,  for sufficiently large $m$,
$$
|(u_{m,1})_t| = |(\tilde{u}_m)_t |\leq |(x/a(t))' D\tilde{u}_m|  \leq C|Dp_0|=O(1) \ \hbox{ in } \ \{(x,t):|x|<a(t)\}.
$$

\no Also note that, in view of  \eqref{est:c_m},  the difference $\tilde{p}_m - p_{m,1}$ is  of order of $c_m$ if $p_m \geq m^{-1/2}$.
In particular if $p_{m,1}\leq \frac{1}{10}m^{-1/3}$, then $\tilde{p}_m\leq \frac{1}{2}m^{-1/3}$ and,  therefore,  in view of  \eqref{observation}, we find 

$$
\begin{array}{lll}
-(mp_{m,1}+\nu)\Delta u_{m,1} &=& -(mp_{m,1}+\nu)\Delta \tilde{u} _m\\[1.1mm]
&\geq & (mp_{m,1}+\nu)[G(\tilde{p}_m) + m^{-1/3}] +(mp_{m,1}+\nu)(f(r)-m^{-1/3}) \\[1.1mm]
&\geq& (mp_{m,1}+\nu)G(\tilde{p}_m) + m^{2/3}p_1 + (mp_{m,1}+\nu)(f(r)-m^{-1/3}),\\[1.1mm]
&\geq& (mp_{m,1}+\nu)p_{m,1}^{1/m-1}G(p_{m,1}) -(u_{m,1})_t,
\end{array}
$$

\no provided that $m$ and $A_0$ are sufficienty large. 
In particular  the second inequality follows from the fact that,  in view of \eqref{observation}, $\nu f_m(\tilde{p}_m) >A_0$ if $\tilde{p}_m \leq \frac{1}{3} m^{-1/3}$,
while,  otherwise,  $p_{m,1}\geq \frac{1}{10}m^{-1/3}$ and, thus,  $mp_{m,1}\geq \frac{1}{10}m^{2/3}\to \infty$ as $m\to \infty$. 
\smallskip

\no It follows that  \eqref{eqn:u_1} holds in $\{(x,t):|x|<a(t)\}$.

\smallskip

\no We next  define $u_{m,2}:= \Phi(\hat{\rho}_m)$, where  $\hat{\rho}_m$ is  a perturbation of $\rho_0$ solving
$$
\hat{\rho}_{m,t} - \nu\Delta \hat{\rho}_m = \hat{\rho}_m G(0) -m^{-1/2}\quad \hbox { in } \Sigma:=\{(x,t):|x|>a(t), t>0\}
$$
with $$\hat{\rho}_m(\cdot,0) = \rho_0(\cdot,0)-1 + \Phi^{-1}(c_m)   \  \  \text{ and} \  \ \Phi(\hat{\rho}_m)=c_m \ \text{ on} \  \{|x|=a(t)\}.$$ 

\no Note that 
$$
\Phi^{-1}(c_m) \sim (\frac{1}{m}\ln m)^{1/m-1}\to 1^-\hbox{ as }m\to \infty,
$$
therefore,  
$$
\hat{\rho}_m = (\frac{\nu}{m^3})^{1/m-1} \sim 1 \hbox{ on } \{|x|=a(t)\}.
$$
\no  Thus the $\hat{\rho}_m$'s converge to $\rho_0$ in the $C_{x,t}^{2,1}-$ norm in $\Sigma$ and in $C_x^1$ up to the boundary. Hence, since   $|D\rho_0|(a(t),t)> 0$ and $\rho_0$ is strictly away from $1$ in $\Sigma$,   for sufficiently large $m$  we have $\hat{\rho}_m^{m-1} \leq \nu/m^3 $, and, thus, 

  \begin{equation}\label{computation}
  \Delta(\hat{\rho}_m^m) = m\hat{\rho}_m^{m-1}\Delta \hat{\rho}_m + m(m-1)\hat{\rho}_m^{m-2}|D\hat{\rho}_m|^2 \sim \frac{1}{m^2}\Delta \hat{\rho}_m + \frac{\nu}{m}|D \hat{\rho}_m|^2 = O(m^{-1}).
  \end{equation}
\no Then, for sufficiently large $m$,  
$$
\hat{\rho}_{m,t} - \Delta (\hat{\rho}_m^m) - \nu\Delta \hat{\rho}_m \leq \hat{\rho}_{m,t} - \nu\Delta\hat{\rho}_m +O(m^{-1}) \leq \hat{\rho}_m G(0) \quad\hbox{ in } \Sigma,
$$

\no and, therefore,  $u_{m,2} $ is a supersolution to  \eqref{eqn:um} in $\Sigma$.

\smallskip

\no  To conclude we check that the spatial gradients of $u_{m,1}$ and $u_{m,2}$ are ordered in the right order  at the patching location $|x|=a(t)$.  This follows since, in view of  \eqref{boundary} and the $C^1$-convergence of $\tilde{u}_m$ to $p_0$ in $\{|x|\leq a(t)\}$,
$$
|D u_{m,1}| = |D\tilde{u}_m|=(1+o(1))|D p_0| < (1-o(1)) \nu |D \rho_0| < |D u_{m,2}|  \quad\hbox{ on } \{|x|=a(t)\},
$$
where the second inequality is due to \eqref{boundary} and the last inequality follows from the fact that 
$$
|D\rho_m^m| \leq m^{-2}|D\rho_m|\quad\hbox{ when }\rho^{m-1} = \nu/m^3.
$$
\end{proof}

\smallskip

\no Next we choose a radially monotone classical supersolution $\phi$ of \eqref{parabolic_elliptic} and define $\rho_0$ and $p_0$ as before. Note that in this case we have
\begin{equation}\label{boundary2}
|Du^+|=\nu|D\rho_0|<|Dp_0| = |Du^-| \quad \hbox{ on }\{(x,t):|x|=a(t)\}.
\end{equation}

\begin{lem}\label{lemma:2b}
There exists a supersolution $\rho_m$ of \eqref{pme} such that, as $m\to \infty$,  $\rho_m$ and  $p_m$  converge  uniformly  to $\rho_0$ and  $p_0$ respectively.
\end{lem}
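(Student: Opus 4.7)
The approach mirrors the proof of Lemma~\ref{lemma:2} with the roles of sub- and super-solutions reversed, and with the gradient inequality \eqref{boundary2} (opposite to \eqref{boundary}) providing the correct geometry at the interface. The plan is to build a radial viscosity supersolution $u_m$ of \eqref{eqn:um} by patching an ``inner'' component $u_{m,1}$ (in the elliptic phase $\{|x|<a(t)\}$) with an ``outer'' component $u_{m,2}$ (in the parabolic phase $\{|x|>a(t)\}$), and then set $\rho_m := \Phi^{-1}(u_m)$ which is then a viscosity supersolution of \eqref{pme}.

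For the inner piece I would solve the perturbed Dirichlet problem
$$
-\Delta \tilde u_m(\cdot,t) = G(\tilde p_m) - f_m(|x|)\quad\hbox{in } \{|x|<a(t)\},\qquad \tilde u_m(a(t),t)=0,
$$
with the same $f_m$ as in \eqref{supersolution} but now \emph{subtracted}, and set $u_{m,1}:=\tilde u_m + c_m$ with $c_m=\Psi(\nu/m^3)$. Repeating the computation that led to \eqref{eqn:u_1} verbatim, with every inequality reversed, shows that $-u_{m,1}$ is a classical supersolution to \eqref{eqn:um} in $\{|x|<a(t)\}$; the key input is again \eqref{observation} together with \eqref{est:c_m}, applied in the layer $\{p_{m,1}\le \tfrac{1}{10}m^{-1/3}\}$ where the extra forcing dominates, and the dichotomy $mp_{m,1}\ge \tfrac{1}{10}m^{2/3}\to\infty$ outside this layer. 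The $L^1$-smallness of $f_m$, together with $|Dp_0|(a(t))\ne 0$, yields $\tilde u_m\to p_0$ in $C^1$ up to $\{|x|=a(t)\}$, exactly as before.

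For the outer piece I would let $\hat\rho_m$ solve
$$
\hat\rho_{m,t} - \nu\Delta\hat\rho_m = \hat\rho_m G(0) + m^{-1/2}\quad\hbox{in } \Sigma:=\{|x|>a(t)\},
$$
with boundary value $\Phi(\hat\rho_m)=c_m$ on $\{|x|=a(t)\}$ and initial data $\rho_0(\cdot,0)-1+\Phi^{-1}(c_m)$. The added \emph{positive} forcing $+m^{-1/2}$ (replacing the $-m^{-1/2}$ of Lemma~\ref{lemma:2}) turns $u_{m,2}:=\Phi(\hat\rho_m)$ into a supersolution of \eqref{eqn:um} on $\Sigma$: the algebraic manipulation \eqref{computation} bounding $\Delta(\hat\rho_m^m)=O(m^{-1})$ still applies since $\hat\rho_m^{m-1}\le \nu/m^3$ for large $m$, while $\hat\rho_m\to\rho_0$ in $C^{2,1}_{x,t}$ interior and $C^1_x$ up to the boundary.

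The patching at $\{|x|=a(t)\}$ is where \eqref{boundary2} is crucial. By the $C^1$-convergence just established,
$$
|Du_{m,1}| = (1+o(1))|Dp_0| > (1+o(1))\,\nu|D\rho_0| > |Du_{m,2}|\quad\hbox{on }\{|x|=a(t)\},
$$
the middle inequality being \eqref{boundary2} and the last one again using $|D\hat\rho_m^m|\le m^{-2}|D\hat\rho_m|$ at the interface. This strict separation of gradients with the right orientation prevents any smooth test function from touching $u_m$ from above at the free boundary, so $u_m$ is a global viscosity supersolution of \eqref{eqn:um}. The uniform convergence $u_m\to\phi$, and therefore $\rho_m\to\rho_0$ and $p_m\to p_0$, follows from the convergences of $\tilde u_m$ and $\hat\rho_m$ together with $c_m\to 0$ and \eqref{comparison}.

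The main obstacle, as in Lemma~\ref{lemma:2}, is the justification that the discontinuous patched function satisfies the viscosity supersolution property at the free boundary: one must argue, using the strict and correctly oriented gradient jump, that no admissible smooth test function can touch $u_m$ from above at $\{|x|=a(t)\}$. Beyond this, all estimates are direct reversals of the corresponding ones in Lemma~\ref{lemma:2}, so I would not reproduce them.
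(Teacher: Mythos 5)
Your proposal matches the paper's own proof in all essential respects: the same patching of an inner elliptic piece (obtained by subtracting the forcing $f_m$) with an outer parabolic piece (obtained by adding $+m^{-1/2}$), the same $c_m$-shift and the same verification of the gradient ordering at $\{|x|=a(t)\}$ via \eqref{boundary2}, concluded by the $C^1$-convergence of $\tilde u_m$ and the $C^{2,1}_{x,t}$-convergence of $\hat\rho_m$. The only differences are cosmetic (you shift by $+c_m$ with $\tilde u_m(a(t))=0$ where the paper shifts by $-c_m$ with $u_{m,1}(a(t))=0$, resulting in a slightly different normalization of the jump across the interface), and these do not affect the argument.
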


\begin{proof}

The argument parallels that of the proof of the previous lemma. We define the barrier $u_m(x,t)=u_m(|x|,t)$  as 
$$
u_m(r,t) = \left\{\begin{array}{lll}
-u_{m,1}(r,t)  &\hbox{ in }& \{r<a(t)\},\\
u_{m,2}(r,t),   &\hbox{ in } &\{r>a(t)\},
                              \end{array}\right.
$$
with $-u_{m,1}$ and $u_{m,2}$ solving, in the respective region,
\begin{equation}\label{eqn:u2}
u_{m,t} - (mp_m+\nu)\Delta u_m \geq -\rho(mp_m+\nu)G(p_m),
\end{equation}
and $$|Du_{m,1}|> |Du_{m,2}| \ \text{on}  \ r=a(t).$$ 
\no This  makes $u_m$ a (nonsmooth) supersolution of \eqref{eqn:um}.

\smallskip

\no We define $u_{m,1} = \tilde{u}_m-c_m$, where $c_m= \Psi(\nu/m^3)$ is as before and $\tilde{u}_m$ solves 
$$
-\Delta \tilde{u}_m= G(p_m) - f_m(x)  \ \hbox{ in } \{|x|<a(t)\}, \  \hbox{ with } \ u_{m,1}(a(t)) = 0,
$$
with $f_m $  as in \eqref{supersolution} in $\{|x|<a(t)\}$. 
\smallskip

\no It follows that $\tilde{u}_m(\cdot,t)$ converges to $p_0(\cdot,t)$ in the $C_x^1-$norm in $\{|x|<a(t)\}$. Moreover, since $u_{m,1,t} = O(1)$, a  straightforward computation, similar to the supersolution case, yields that
$$
u_{m,1,t} - (mp_{m,1}+\nu)\Delta u_{m,1} \leq \rho_{m,1}(mp_{m,1}+\nu)G(p_{m,1}),
$$
and thus \eqref{eqn:u2} is satisfied.

\smallskip

\no  Next  let $u_{m,2}:= \Phi(\hat{\rho}_m)$, where $\hat{\rho}_m$ is  a perturbation of $\rho_0$ solving
$$
\hat{\rho}_{m,t} - \nu\Delta \hat{\rho}_m = \hat{\rho}_m G(0) +m^{-1/2} \hbox { in } \Sigma:=\{(x,t):|x|>a(t), t>0\},
$$
with $\hat{\rho}_m(\cdot,0) = \rho_0(\cdot,0) -1+\Phi^{-1}(c_m)$ and $\Phi(\hat{\rho}_m)=c_m$ on $\{|x|=a(t)\}$. 

\smallskip

\no Observe that, as $m\to \infty$,   $\hat{\rho}_m$ converges to $\rho_0$ in the $C_{x,t}^{2,1}-$norm in $\Sigma$ and in the $C_x^1-$norm up to the boundary, and, since $|D\rho_0|(a(t),t)>0$ and $\rho_0 <1$ in $\Sigma$, for sufficiently large $m$, we find
  $$
\hat{\rho}_m^{m-1} \leq  \hat{\rho}_m^{m-1}(a(t)) \leq \Psi^{-1}(c_m) =\nu/m^3  \quad\hbox{ in } \Sigma.
$$ 
\no Thus it follows that 
  \begin{equation}\label{computation2}
  \Delta(\hat{\rho}_m^m) = m\hat{\rho}_m^{m-1}\Delta \hat{\rho}_m + m(m-1)\hat{\rho}_m^{m-2}| D\hat{\rho}_m|^2 \sim \frac{1}{m^2}\Delta \hat{\rho}_m + \frac{\nu}{m}|D \hat{\rho}_m|^2 = O(m^{-1}),
  \end{equation}
and,   for sufficiently large $m$, $u_{m,2}$ is a supersolution of  \eqref{eqn:um}, in view of the fact, that in this case   
$$
\hat{\rho}_{m,t} - \Delta (\hat{\rho}_m^m) - \nu\Delta \hat{\rho}_m \geq \hat{\rho}_{m,t} - \nu\Delta\hat{\rho}_m -O(m^{-1})\geq \rho G(0) \quad\hbox{ in } \Sigma.
$$



\no Lastly, in view of  \eqref{boundary2}, we have on $\{(x,t): |x|=a(t)\}$
$$
|D u_{m,1}| (a(t),t) \geq (1-o(1))|Dp_0|(a(t),t) > (1+o(1))\nu|D\rho_0|(a(t),t)\geq |Du_{m,2}| (a(t),t),
$$
which is the correct order for $\tilde{u}_{m}$ to be a subsolution to  \eqref{eqn:um}.
 
  \end{proof}

Combining the two lemmas above yields the proof of Theorem~\ref{thm:radial}.

%


\section{Convergence in the general case}
\label{sec:general}

\no We  use the results of the previous section to show the general convergence result, that is Theorem~\ref{thm:main}. We only consider the case $\Omega=\R^n$, since the arguments for bounded $\Omega$ are similar. 

\smallskip


\no Let
\begin{equation}\label{limsup_liminf}
u:= {\limsup}^* u_m \hbox{ and } v:= {\liminf}_* u_m.
\end{equation}

\no We have:

\begin{theorem}\label{sub_super}
$u$ is a subsolution and $v$ is a supersolution to  \eqref{parabolic_elliptic}. 
\end{theorem}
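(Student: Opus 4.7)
The plan is to run the standard stability-by-barriers argument for viscosity solutions of free boundary problems, using Theorem~\ref{thm:radial} as the bridge between test functions for the limit equation \eqref{parabolic_elliptic} and test functions for the approximating equation \eqref{eqn:um}. I describe the sub-solution property of $u$; the super-solution property of $v$ is entirely parallel, exchanging the roles of radial sub- and super-solutions given by Theorem~\ref{thm:radial}.

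Suppose, for contradiction, that $u$ fails to be a viscosity sub-solution. Then there is a (cylindrical) parabolic neighborhood $E \subset Q_T$ and a strict classical super-solution $\varphi$ of \eqref{parabolic_elliptic} with $u < \varphi$ on $\partial_P E$ but $u \ge \varphi$ somewhere in $E$. Upper semi-continuity of $u$ and continuity of $\varphi$ in each phase (together with the free boundary condition $|D\varphi^-|>|D\varphi^+|$ from the super-solution version of Definition~\ref{de:classicalSubsolution}) yield a first contact time $t_0$ and a contact point $(x_0,t_0) \in E$. Near this point I replace $\varphi$ by a radially symmetric strict classical super-solution $\Phi$ of \eqref{parabolic_elliptic} on a smaller radial cylinder $E'\subset E$, chosen so that $u<\Phi\le\varphi$ on $\partial_P E'$ and $\Phi$ still touches $u$ from above at $(x_0,t_0)$. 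In the regular case $\varphi(x_0,t_0)\neq 0$ this is a routine perturbation inside one phase; at a free-boundary contact point the strict gradient separation in the definition of strict classical super-solution permits inserting an osculating sphere whose radial motion is slightly slower than that of $\partial\{\varphi>0\}$, keeping the super-solution property and the strictness of (iii)--(iv).

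By Theorem~\ref{thm:radial} there is a family of radial super-solutions $\Phi_m$ of \eqref{eqn:um} converging uniformly to $\Phi$ on $\overline{E'}$. Since $u=\limsup^* u_m$ and the gap between $u$ and $\Phi$ on $\partial_P E'$ is strictly positive, for all sufficiently large $m$ we have $u_m < \Phi_m$ on $\partial_P E'$. The equation \eqref{eqn:um} has the form $[b_m(u_m)]_t-\nu\Delta u_m=(\text{smooth})$ with $b_m$ Lipschitz and monotone, so the standard comparison principle (a direct Alt--Luckhaus-type duality test, exactly as in the proof of Theorem~\ref{comparison_AL}) applies and yields $u_m<\Phi_m$ on $E'$. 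Taking $\limsup^*$ as $m\to\infty$ gives $u \le \Phi$ on $E'$; evaluating at $(x_0,t_0)$ contradicts the strict separation implied by $\Phi$ being a strict classical super-solution with a tangential contact with $u$ from above. Therefore no such $\varphi$ can exist, and $u$ is a viscosity sub-solution. The main obstacle is Step~2, the insertion of a radial super-solution at a free-boundary contact point while preserving strictness of the gradient-jump inequality; once this is done, the lift to \eqref{eqn:um} via Theorem~\ref{thm:radial}, the parabolic comparison, and the passage to $\limsup^*$ are essentially mechanical.
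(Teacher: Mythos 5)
Your high-level plan is the same as the paper's: argue by contradiction via a strict classical supersolution $\varphi$ that $u$ crosses, reduce the contact point to the free boundary, replace $\varphi$ locally by a radial supersolution $\Phi$, lift $\Phi$ to supersolutions $\Phi_m$ of \eqref{eqn:um} via Theorem~\ref{thm:radial}, apply the comparison for \eqref{eqn:um}, and pass to the $\limsup^*$. That scaffolding matches the paper. But there are two genuine gaps.

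First, and most importantly, the construction of the radial supersolution $\Phi$ at a free-boundary contact point is precisely the content of the paper's proof, and you explicitly leave it undone (``the main obstacle is Step~2'').  The paper constructs $\psi$ on an annular cylinder $\Sigma := \{r_1<|x-z_0|<r_2\}\times[s_0-\tau,s_0]$ by: choosing an interior osculating ball $B_r(z_0)\subset\{\phi(\cdot,s_0)<0\}$ at the contact point $(y_0,s_0)$; setting $r(t):=r+a(t-s_0)$ with $a=s+\varepsilon$, where $s$ is the normal velocity of $\partial\{\phi>0\}$ at $(y_0,s_0)$; solving the elliptic equation in $\{r_1\leq|x-z_0|\leq r(t)\}$ and the parabolic equation in $\{r(t)\leq|x-z_0|\leq r_2\}$ with boundary data that pad $\phi$ by $O(\varepsilon)$; and then checking, using $C^{2,1}$-regularity of $\phi$, that $|D\psi^-|>|D\psi^+|$ on $\{|x-z_0|=r(t)\}$ (and ordering $\psi\geq\phi$ on $\partial_P\Sigma$ with equality at the contact point) when $(t_1,r_1,r_2,\varepsilon)$ is close to $(s_0,r,r,0)$.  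Asserting that ``the strict gradient separation permits inserting an osculating sphere'' is not a proof of this; it is the statement of the step. Until this is carried out, the argument is an outline.

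Second, the final contradiction as you have written it does not follow.  You arrange $u<\Phi$ on $\partial_P E'$ with $\Phi(x_0,t_0)=u(x_0,t_0)$, obtain $u_m<\Phi_m$ from comparison, and pass to the limit to get $u\leq\Phi$. But $u\leq\Phi$ together with equality at $(x_0,t_0)$ is perfectly consistent, so there is no contradiction to ``strict separation.''  The paper closes this gap in two places: it first replaces $\phi$ by an inf-convolution so that $u-\phi$ attains a \emph{strictly positive} maximum, and at the end it shifts $\psi$ down by a small constant so that $u_m$ actually crosses $\psi_m$ in the interior of $\Sigma$ for large $m$, which is what contradicts the comparison for \eqref{pme}. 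One of these devices is needed.

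Two smaller remarks. For the case $\varphi(x_0,t_0)\neq 0$ (contact in the open positive phase) the paper does \emph{not} route through Theorem~\ref{thm:radial}: it argues directly from the maximum principle that at $(x_m,t_m)$ the quantity $mp_m\to 0$ uniformly, so that $\phi_t-\nu\Delta\phi\leq(\phi-\nu)G(0)+o(1)$, contradicting strictness of $\phi$ as a supersolution of the parabolic equation. Your phrase ``routine perturbation inside one phase'' does not obviously work since Theorem~\ref{thm:radial} as stated concerns barriers with a free boundary. Finally, the $\Phi_m$ coming from Theorem~\ref{thm:radial} are \emph{nonsmooth} viscosity supersolutions of \eqref{eqn:um}, glued across $\{|x|=a(t)\}$ by a gradient-ordering condition; the comparison you invoke is the viscosity comparison for the porous medium equation, not a duality argument ``exactly as in the proof of Theorem~\ref{comparison_AL}.''
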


\begin{proof}

We will only show that $u$ is a subsolution of \eqref{parabolic_elliptic}. The proof of the other claim is similar. 

\smallskip

\no If $u$ is not a subsolution,  there must exist  a classical strict supersolution $\phi$ of \eqref{parabolic_elliptic} in a parabolic cylinder $\mathcal{C}:=B_r(x_0)\times [t_1,t_0]$ such that $ u <\phi$ on the parabolic boundary of $\mathcal{C}$ and $u$ crosses $\phi$ from below for the first time at $t=t_0$. Let $(x_0,t_0)$ be where $u-\phi$ take its nonnegative maximum. 

\smallskip

\no Next we perturb $\phi$ so that it becomes slightly smaller and thus  $u-\phi$ achieves  a positive maximum in $\mathcal{C}$. For instance, we may  replace $\phi$ by the inf-convolution  
$$
\tilde{\phi}(x,t):= \inf_{\{ y\in\bar \Omega: |x-y|\leq r\}} \phi(y,t)
$$
for sufficiently small $r>0$, and  solve the corresponding elliptic and parabolic problem in the positive and negative phase of $\phi$. Note that this perturbation preserves the supersolution condition $|D\phi^+|<|D\phi^-|$ on the zero level set. 

\medskip

\no The definition of $u$ yields a subsequence, that we still denote by $m$, along which, for sufficiently large $m$,  $u_m-\phi$ has  a positive maximum in $\mathcal{C}$ with $u_m<\phi$  on the parabolic boundary of $\mathcal{C}$.  Since $u_m$ is continuous, this means that $u_m=\phi$ at $(x_m,t_m)$ for the first time in $\mathcal{C}$ with $t_1< t_m \leq t_0$.  
\smallskip

\no Let $(y_0,s_0)$ be a limit point of $(x_m,t_m)$. We claim that 
\begin{equation}\label{claim}
(y_0,s_0)\in\partial\{\phi>0\}.
\end{equation}
 To see this, suppose $\phi(y_0,s_0)>\delta>0$. Then the same holds at $(x_m,t_m)$ for large enough  $m$. It follows  from the definition of $u_m$ that,  as $m\to\infty$, $mp_m \to 0$  uniformly. 
 This and the fact that $\phi$ is a supersolution of the parabolic equation in \eqref{parabolic_elliptic} yield a contradiction by a maximum principle argument, since, in view of \eqref{u_m},  at $(x_m,t_m)$
$$
\begin{array}{lll}
\phi_t - \nu\Delta\phi &\leq& u_{m,t} - \nu\Delta u_m \\ 
& \leq & mp_m\Delta u_m - mp_m\rho_mG(p_m)\nu\rho_m G(p_m)   \\
&\leq &-\nu\rho_m G(0) + o(1)   \\ 
&\leq & (\phi-\nu)G(0)+o(1).
\end{array}
$$
\no Note that in the third inequality of above computation we have used that \\$mp_m\Delta u_m (x_m,t_m) \leq mp_m\Delta\phi(x_m,t_m)$ and, in view of the regularity of $\phi$ in the positive phase, $\lim_{m\to \infty}mp_m\Delta\phi(x_m,t_m)=0$.  In the last inequality we have used that  $-\nu\rho_m = u_m-\nu$. 
\smallskip

\no Similar arguments  yield that the case $(y_0,s_0)\in\partial\{\phi>0\}.$

\smallskip
 

\no For the rest of the proof we consider  the subsequence of $m$'s  along which the  $(x_m,t_m)$'s  converges to $(y_0,s_0).$ Since $\phi$ has $C^{2,1}-$free boundary, there exists a spatial ball $B_r(z_0)\subset \{\phi(\cdot,s_0)<0\}$ which touches the free boundary at $y_0$. 

\no Now we  approximate $\phi$ by a radial supersolution $\psi$ of \eqref{parabolic_elliptic} in the space-time domain 
$$
\Sigma:=\{x:r_1<|x-z_0| <r_2\}\times [s_0-\tau,s_0],
$$
for some $r_1<r<r_2$.  Roughly speaking the construction amounts to taking Taylor's  expansion of $\phi$ at $(y_0,s_0)$ in each phase, up to the first order in space and time, and constructing a radial function with them. 

\smallskip

\no Let $s$ be the outward normal velocity of the free boundary $\partial\{\phi>0\}$ at $(y_0,s_0)$, pick   $\e>0$ small, let $r(t):= r+a(t-s_0)$ for $a = s+\e$ so that $B_{r(t)}(z_0) \subset \{\phi(\cdot,t) <0\}$  for $t_1<t<s_0$ with  $t_1$ sufficiently close to $s_0$, and, moreover, 
\begin{equation}\label{observation1}
\phi(x,t_1) \leq -C\e(s_0-t_1) \hbox { in } B_{r(t_1)}(z_0).
\end{equation}

\no Now consider $\psi=\psi(x,t)$ in $\Sigma$ such that
$$
\left\{\begin{array}{lll}
\psi(\cdot,t) = 0 &\hbox{ on }& \{|x-z_0|=r(t)\}, \\
-\Delta\psi_- = G(\psi_-)&\hbox{ in } &\{r_1\leq |x-z_0| \leq r(t)\},\\
\psi_{+,t} - \nu\Delta\psi_+ =(\psi_+-\nu)G(0)&\hbox{ in }& \{r(t) \leq |x-z_0|\leq r_2\},\\ 
\psi = \phi_+(z_0+ r_2\nu,t)+\e (r_2-r(t)) &\hbox{ on }& \{|x-z_0|=r_2\},\\
\psi = \phi_-(z_0+ r_1\nu,t) -\e (r(t)-r_1)&\hbox{ on }& \{|x-z_0| = r_1\}.\\
\end{array}\right.
$$
Note that \eqref{observation} and the maximum principle for the elliptic equation in \eqref{parabolic_elliptic} yield that  $\phi(\cdot,t_1) <\psi(\cdot,t_1)$ in $B_{r(t_1)}(z_0) \setminus B_{r_1}(z_0)$. 
\smallskip

\no The initial datum  for $\psi$ in the parabolic phase at $t=t_1$ is given, for  $r = |x-z_0|$, by 
$$
(\phi(z_0+r_2\nu,t_1)+\e (r_2-r(t)))(r_2-r(t_1))^{-1}(r-r(t_1)), 
$$
so that, if $r_2$ is sufficiently close to $r(t_1)$,  $\phi(x,t_1) \leq \psi(x,t_1)$ on $B_{r_2}(z_0)\setminus B_{r(t_1)}(z_0)$. 

\no Also note that, since $\phi$ is a smooth strict supersolution of \eqref{parabolic_elliptic} with nonzero gradient at $(x_0,t_0)$,  if   $r_1$ and $r_2$ are sufficiently close to $r$,  then $|D\psi_+|$ and $|D\psi_-|$ are close to $\phi$ at $(y_0,s_0)$ up to  order $\e$.   

\no Thus we can conclude that $\psi$ is a supersolution of \eqref{parabolic_elliptic} in $\Sigma$ if $(t_1,r_1,r_2,\e)$ is sufficiently close to $(s_0,r,r,0)$.

\smallskip

\no Finally note that  $u_m<\phi<\psi $ on the parabolic boundary of $\Sigma$ with $\phi=\psi$ at $(y_0,s_0)$. 

\smallskip

\no The construction in the previous section  yields  a supersolution $\psi_m$ of \eqref{eqn:um} which  converges uniformly to $\psi$ as $m\to\infty$. By shifting, if needed,  $\psi$ down by a small amount, yields that  $u_m$ crosses $\psi$ from below in the interior of $\Sigma$,  a contradiction to the comparison principle of \eqref{pme}.

\end{proof}





\no Next we show that $u$ and $v$ coincide initiallly. For this we need \eqref{initial_intro} and  \eqref{initial_intro2}.

\begin{lem}\label{initial}
Assume \eqref{initial_intro} and  \eqref{initial_intro2}. Then, as $t\to 0^+$,
\begin{itemize}
\item[(a)] the sets $\partial\{u(\cdot,t)>0\}$ and $\partial\{v(\cdot,t)>0\}$
converge uniformly to $\Omega_0$  in the Hausdorff distance, and
\item[(b)] both $u(\cdot,t)$ and $v(\cdot,t)$ converge in $\Omega_0$ to 
the harmonic function $w$ in $\Omega_0$ with the zero boundary data, while 
outside of $\Omega_0$,  they converge to the original initial data $(u_0)_+:= \nu(1-\rho_0).$
\end{itemize}
\end{lem}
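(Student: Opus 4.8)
The plan is to squeeze $v$ and $u$ between two one-parameter families of barriers $\psi^-_\e\le\psi^+_\e$, $\e>0$, each a sub- (resp.\ super-) solution of \eqref{parabolic_elliptic} on $\Omega\times(0,t_0(\e)]$ whose zero set lies in the $O(\e+t^{1/3})$-tube about $\partial\Omega_0$ and which tends, as $\e\to0$ and $t\to0^+$, to the function $u_0$ equal to $-w$ on $\Omega_0$ and to $\nu(1-\rho_0)$ on $\Omega_0^c$, where $-\Delta w=G(w)$ in $\Omega_0$, $w=0$ on $\partial\Omega_0$. By the constructions of Section~\ref{sec:radial} — Theorem~\ref{thm:radial} and Lemmas~\ref{lemma:2}--\ref{lemma:2b}, localized along a finite atlas of Lipschitz coordinate patches covering $\partial\Omega_0$ and glued by taking suprema/infima of the radial pieces — these barriers lift to sub-/super-solutions $\psi^\pm_{\e,m}$ of \eqref{eqn:um} converging uniformly, as $m\to\infty$, to $\psi^\pm_\e$. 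The comparison principle for \eqref{pme} then gives $\psi^-_{\e,m}\le u_m\le\psi^+_{\e,m}$, hence $\psi^-_\e\le v\le u\le\psi^+_\e$, and sending $t\to0^+$ and $\e\to0$ yields (a) and (b). It is essential to argue through the approximating equation \eqref{eqn:um}, and not to compare in the limit problem directly, because the semicontinuous traces $u(\cdot,0)={\limsup}^* u_{m,0}$ and $v(\cdot,0)={\liminf}_* u_{m,0}$ agree with $u_0$ only on $\Omega_0^c$: on $\Omega_0$ the quantity $\rho_{m,0}^m$ is not controlled by $\rho_0$, so there all one knows is $v(\cdot,0)\le u(\cdot,0)\le0$. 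The barriers must therefore be ordered against $u_{m,0}$ itself, which is feasible because, by \eqref{initial_intro2}, $\nu(1-\rho_{m,0})\to\nu(1-\rho_0)$ uniformly and $-1\le u_{m,0}\le\nu(1-\rho_{m,0})$.

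For part (a) the barriers are exactly those of Lemma~\ref{initial0}, transplanted to \eqref{eqn:um}: one uses the smoothed $(\e+Ct^{1/3})$-level sets of the distance to $\Omega_0$, respectively to $\Omega\setminus\Omega_0$, which by \eqref{initial_intro2} have the needed one-sided ball conditions for $t$ small and whose interfaces move with normal speed of order $t^{-2/3}$ (displacement $O(t^{1/3})$); on each barrier the pressure side solves $-\Delta p=G(p)$ and the density side the parabolic equation of \eqref{parabolic_elliptic} with data $\nu(1-\rho_0)\pm\e$, and the $t^{-2/3}$ interface speed is exactly what makes the inequality between $|D\psi^+|$ and $|D\psi^-|$ at the interface have the sign placing $\psi^+_\e$ on the supersolution and $\psi^-_\e$ on the subsolution side. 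Since the zero sets of $\psi^\pm_\e$ both lie within $O(\e+Ct^{1/3})$ of $\partial\Omega_0$ and the positive phase of $v$, resp.\ of $u$, is sandwiched between those of $\psi^-_\e$ and $\psi^+_\e$, letting $\e\to0$ gives the two-sided Hausdorff inclusion in (a); and off $\partial\Omega_0$, where there is no initial layer, the same sandwich gives the convergence to $\nu(1-\rho_0)$ on $\Omega_0^c$ in (b).

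What I expect to be the main obstacle is the initial layer inside $\Omega_0$ needed for (b): there $\rho_{m,0}\to1$ uniformly but $u_{m,0}$ can converge on $\Omega_0$ to any function with values in $[-1,0]$, whereas $u(\cdot,t),v(\cdot,t)$ must both tend to $-w$ as $t\to0^+$. This is the instantaneous, ``mesa''-type relaxation of the stiff-pressure limit. It has to be built into the barriers as follows: on $\Omega_0$, $\psi^-_{\e,m}$ starts at $t=0$ from the mesa — density $1$ on a slight interior contraction of $\Omega_0$ (so $\psi^-_{\e,m}(\cdot,0)=\Phi_m(1)=-1\le u_{m,0}$) with bounded pressure $\frac{m}{m-1}$ — while $\psi^+_{\e,m}$ starts in the density phase with density slightly below $\rho_{m,0}$ (so $\psi^+_{\e,m}(\cdot,0)\ge u_{m,0}$ by monotonicity of $\Phi_m$), and its pressure phase \emph{nucleates} only for $t>0$, once the source $G(0)>0$ has driven its density up to within $O(1/m)$ of $1$. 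For $t>0$ the large coefficient $(m-1)p_m$ in \eqref{eqn:p} equilibrates the pressure of both barriers to the solution $w_\e$ of $-\Delta w_\e=G(w_\e)$ on the $\e$-perturbed copy of $\Omega_0$, on time scales that vanish as first $m\to\infty$ and then $\e\to0$, while $w_\e\to w$ by elliptic stability. Quantifying these relaxation rates, verifying that the nucleating upper barrier is a genuine supersolution of \eqref{eqn:um} throughout the short transient, and checking that nucleation does not spoil the interface gradient ordering — precisely the subtlety already flagged in the discussion preceding Theorem~\ref{thm:cp} — is the technical heart of the proof; the remainder is bookkeeping: patching the radial barriers of Section~\ref{sec:radial} along the Lipschitz atlas of $\partial\Omega_0$.
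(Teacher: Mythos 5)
Your overall architecture coincides with the paper's: barriers are built for the approximating equation \eqref{eqn:um} (not for the limit problem), ordered against $u_{m,0}$ itself, compared via the comparison principle for \eqref{pme}, and then one sends $m\to\infty$ followed by $\e\to 0$; and you correctly identify that the whole difficulty sits in the initial relaxation layer inside $\Omega_0$, where $u_{m,0}$ is uncontrolled in $[-1,\nu\e_m]$. For part (a) the paper's barriers are slightly different in detail (a radial subsolution with a linearly shrinking interface $a(t)=\e/2-Mt$, whose gradient condition is checked via Lemma~\ref{appendix}, for the outer inclusion; and the explicit supersolution $t(|x-x_0|^2-r^2)$ glued to a heat solution with exponentially small lateral gradient, for the inner inclusion), but these are interchangeable with your $t^{1/3}$-tube barriers.

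The substantive issue is that what you defer as ``the technical heart'' of part (b) --- quantifying the relaxation rate and verifying that a nucleating upper barrier is a genuine supersolution through the transient --- is precisely the step the lemma lives or dies on, and your sketch of it (a density-phase supersolution whose pressure nucleates once the density reaches $1-O(1/m)$) is the harder of the two available routes: you would have to track the free boundary of the nucleated pressure phase and its gradient ordering through the transient. The paper sidesteps nucleation entirely by a two-stage argument. First, the inner-inclusion supersolution from part (a) already gives the quantitative bound $u_m\le t(|x-x_0|^2-r^2)+o(1)$ on interior balls, hence $u_m\le-\delta$ on $\Omega_f:=\{d(x,\R^n\setminus\Omega_0)>\e\}$ after an $o(1)$ transient, and symmetrically $u_m\ge\delta$ on $\partial\Omega_g$. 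Second, with this strict sign in hand it uses \emph{separable} barriers $\phi=-a(t)f(x)$ and $\psi=-b(t)g(x)$, where $f,g$ solve $-\Delta f=G(f)\mp\e$ on $\Omega_f,\Omega_g$ with boundary value $\delta$, and $a(t)=\min[\delta e^{m\e t},1]$, $b(t)=\max[\delta^{-1}e^{-m\e\delta t},1]$: the factor $m$ in the exponent is exactly the large coefficient $mp_m+\nu$ in \eqref{eqn:um}, so the barriers saturate at the $\e$-perturbed elliptic profiles within time $O(\tfrac{\ln(1/\delta)}{m\e})$, and no free boundary needs to be tracked because $\phi$ and $\psi$ keep a fixed sign on their respective domains. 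If you adopt this two-stage structure your plan closes; as written, the upper barrier on $\Omega_0$ is not yet a verified supersolution and the convergence rate to $-w_\e$ is not yet established.
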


\begin{proof}

\no Assumptions \eqref{initial_intro} and \eqref{initial_intro2} imply that, as $m\to \infty$, there exist a sequence of positive numbers $\e_m\to 0$ such that 
\begin{equation}\label{initial_order}
\Omega_0=\{\rho_0= 1\}\subset \{u_m(\cdot,0) \leq 0\} \subset \{\rho_0>1-\e_m\},
\end{equation}
and, in particular,  the sets $\{u_m (\cdot,0)\leq 0\}$'s converge to $\Omega_0$ in the Hausdorff distance.
\smallskip

\no We prove (a) first.  Fix $\e>0$, $x_0\in\Omega_\e:= \{x: d(x,\Omega_0) \leq \e\}$ and let  $B_\e(y_0)$ be the ball which touches $\partial\Omega_0$ at $x_0$ from outside of $\Omega_0$.  With  $a(t):= \e/2 -Mt$, we consider the radial function $\phi$ given by 
$$
\left\{\begin{array}{lll}
-\Delta\phi_- = G(\phi_-) &\hbox{ in }& \{x: a(t)\leq |x-y_0| \leq (C+1)\e\},\\ 
\phi_-(\cdot,t) = M_0 &\hbox{ on }& \{|x-y_0|= (C+1)\e\},\\ 
\phi(\cdot,t)=0 &
\hbox{ on } &\{ |x-y_0|=a(t)\},\\ 
\phi_t - \nu\Delta\phi = (\phi-\nu)G(0) &\hbox{ in } &\{x: |x-y_0| \leq a(t)\},\\
\phi (\cdot, 0)=c_0|x-y_0|^2 - c_0\e^2/4 &\hbox{ in} & \text{ the parabolic phase}, 
\end{array}\right.
$$
with $c_0$ is chosen small enough so that $\phi (\cdot, 0)$ sits below $u_m(\cdot,0)$ in $B_{\e/2}(y_0)$; note that the  support of $\phi (\cdot, 0)$ equals $B_{\e/2}(y_0)$.
 
\no  It follows from  by Lemma~\ref{appendix} that $|D\phi_+|(\cdot,a(t))$ is sufficiently large for some  large $M$ and for small enough $t$. Choosing such  $M$ and $t$
we have $|D\phi_-| <|D\phi_+|$ on $x=a(t)$, and, thus,  $\phi$ a subsolution of \eqref{parabolic_elliptic} for a small time interval $[0,t_0]$. Using the approximating solutions for the $u_m$-equation constructed  in the previous section  as well as \eqref{initial_order},  we conclude  that $v \geq \phi$, and, hence,  $\{v(\cdot,t)<0\}$ is outside of $B_{a(t)}(y_0)$ for $0\leq t\leq t_0$. Since $x_0$ was arbitrarily and $\Omega_\e$ is compact, we find  
$$
\{v(\cdot,t)<0\} \subset \Omega_\e \hbox{ for } t\leq t_0=t_0(\e). 
$$
Letting $\e \to 0$ gives  
\begin{equation}\label{upper_bd}
\lim \sup_{t\to 0^+}\{v(\cdot,t)<0\} \subset \overline{\Omega_0}.
\end{equation}

\no Next we show that  
\begin{equation}\label{lower_bd}
\overline{\Omega_0} \subset \lim\inf_{t\to 0^+} \{u(\cdot,t)< 0\}. 
\end{equation}

\no Fix $B_{2r}(x_0) \subset \Omega_0$, let $f(x,t):= t(|x-x_0|^2 -r^2)$ and denote by $p_f$ the pressure function corresponding to $f$. Then, as long as $r$ is sufficiently small, 
$$
f_t - (mp_f + \nu) \Delta f = (|x-x_0|^2 -r^2)  +2nA(mp_f+\nu)t|x|^2 \geq -\nu G(p_f) \hbox{ in } B_r(x_0) \times [0, 1],
$$

\no Let  
$$
g(x,t):= f(x,t) \chi_{|x-x_0|\leq r}+ h(x,t)\chi_{|x-x_0|\geq r},
$$ 

where $h(x,t)$ solves a heat equation in $(\R^n-B_r(x_0))\times [0,1]$ with initial datum $(u_0)_+$ and zero boundary condition on $\partial B_r(x_0)$. Since $(u_0)_+ = 0$ in $B_{2r}(x_0)$, it follows from the heat kernel estimate that,  for small time, 
$$
|Dh|(\cdot,t) \leq t^{-3/2} e^{-r^2/t} \leq |Df|(\cdot,t) \hbox{ on }\partial B_r(x_0).
$$
\smallskip

\no Thus $g$ is a supersolution of \eqref{eqn:um} in $\R^n\times (0,t_0)$ for sufficiently small $t_0$ depending on $r$, and, hence,  for $0\leq t\leq t_0$,   $B_r(x_0) \subset \{u(\cdot,t)<0\}$. Since $r$ is arbitrarily,  \eqref{lower_bd} follows.

\vspace{10pt}

\no In view of the facts that  $v\leq u$ and  $\{u(\cdot,t)<0\} \subset \{v(\cdot,t)< 0\}$, the two inclusions above yield (a).

\vspace{10pt}

\no Now we prove (b).  Fix $\e>0$ and define  
$$\Omega_f := \{x: d(x, \R^n \setminus \Omega_0) > \e \}\hbox{ and }\Omega_g:=\{x:d(x,\Omega_0) \leq \e\}. 
$$
In view of  (a),  there exist $\delta = \delta(\e)>0,  t_0=t_0(\e)>0$ and $M$ such that for $m>M$ and $0\leq t\leq t_0$ the following holds:  $u_m \geq \delta$ on $\partial\Omega_g$, $u_m \leq -\delta$ in $\Omega_f$, and moreover  
\begin{equation*}
- \Delta f = G(f)-\e \hbox{ in } \Omega_f  \ \text{ and} \  f=\delta\hbox{ on }\partial\Omega_f,
\end{equation*}
and
\begin{equation*}
 -\Delta g = G(g)+\e\hbox{ in }\Omega_g  \ \text{ and} \  g=\delta\hbox{ on }\partial\Omega_g.
\end{equation*}

%
\no Let 
$$
\phi(x,t):= -a(t)f(x) \hbox{ and } \psi(x,t) := -b(t) g(x),
$$ 
where
$$
  a(t):= \min [-\delta e^{m\e t} ,1] \hbox{ and } b(t) := \max[\delta^{-1}e^{-m\e\delta t},1].
$$
\no Direct calculations then yield that, for sufficiently large $m$ and   any choice of $t_0>0$, $\phi$ and $\psi$  are respectively supersolution and subsoluton to the  $u_m$-equation in $\Omega_f\times (0,t_0]$ and $\Omega_g \times (0,t_0]$.   Thus the comparison principle for the $u_m$-equation and the choice of $\delta$ and $t_0$ yield
$$
\psi\leq u_m \hbox{ in } \Omega_g \times [0,t_0] \hbox{ and } u_m \leq \phi\hbox{ in }\Omega_f \times [0,t_0].
$$
\no Letting $m\to\infty$ and using arbitrarily small $\e>0$,  we conclude that the  $(u_m)_-$'s converge  uniformly  to the solution of the elliptic equation $\Omega_0$ with zero boundary data. 
\smallskip

\no Similar arguments apply to  $(u_m)_+$.
\end{proof}

\smallskip

\no To derive the convergence result for $u_m$'s  using the previous  lemmata, we need to first show  that, as $m\to\infty$, the $\nu(1-\rho_m)$'s  stay nonnegative. 

\begin{lem}\label{density_ub}
Assume \eqref{pmeboundary}, \eqref{pmeboundary2} and \eqref{g}. 
Then 
$$
\limsup_{m\to\infty} p_m (x,t) \leq p_M \ \hbox{ in }  \ Q_T,
$$
and, hence, 
$$\limsup_{m\to\infty} \rho_m \leq 1 \ \hbox{ in }  \ Q_T.
$$
\end{lem}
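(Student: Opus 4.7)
My plan is to dominate $\rho_m$ pointwise by a spatially constant supersolution arising from the source ODE.

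For each $m$, let $\bar\rho_m:[0,T]\to(0,\infty)$ solve
\[
\bar\rho_m'(t) = \bar\rho_m(t)\,G(\bar p_m(t)), \qquad \bar p_m(t):=\tfrac{m}{m-1}\bar\rho_m(t)^{m-1},
\]
with $\bar p_m(0)=M_0$ (so $\bar\rho_m(0)=(\tfrac{m-1}{m}M_0)^{1/(m-1)}$). Equivalently, $\bar p_m$ satisfies $\bar p_m' = (m-1)\bar p_m G(\bar p_m)$. Since $G(p_M)=0$ and $G'<0$, the point $p_M$ is the unique equilibrium and is globally attractive, and linearization near $p_M$ yields exponential decay with rate $\sim (m-1)p_M|G'(p_M)|$, so that
\[
\lim_{m\to\infty}\bar p_m(t)=p_M \qquad\text{for every } t>0,
\]
and correspondingly $\bar\rho_m(t)\to 1$ for every $t>0$.

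Because $\bar\rho_m$ is spatially constant, it solves \eqref{pme} classically. On the parabolic boundary of $Q_T$ one has $\bar\rho_m\ge\rho_m$: at $t=0$ this follows from $\bar p_m(0)=M_0\ge p_{m,0}$ via \eqref{initial_intro} and the monotonicity of $\rho\mapsto p$; on $\partial\Omega\times[0,T]$, for $m$ sufficiently large, it holds because $\bar\rho_m(t)\ge\min\bigl(\bar\rho_m(0),(\tfrac{m-1}{m}p_M)^{1/(m-1)}\bigr)$ tends to $1$ while $\rho_L<1$ is fixed. In the case $\Omega=\R^n$, one first uses $\rho_0\in L^1$ together with mass bounds for \eqref{pme} to ensure $\rho_m(\cdot,t)\to 0$ at infinity, so that $\rho_m\le\bar\rho_m$ holds automatically outside a large ball and the comparison closes on a bounded region.

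The source term $\rho\mapsto\rho\,G(\tfrac{m}{m-1}\rho^{m-1})$ is smooth on bounded sets and, since $\nu>0$, the equation is uniformly parabolic. The standard comparison principle for \eqref{pme} therefore gives
\[
\rho_m(x,t)\le\bar\rho_m(t),\qquad p_m(x,t)\le\bar p_m(t),\qquad\text{in }Q_T.
\]
Passing to the limit $m\to\infty$ and invoking the first paragraph yields $\limsup_{m\to\infty}p_m(x,t)\le p_M$. Since $\rho_m=(\tfrac{m-1}{m}p_m)^{1/(m-1)}$ with $p_m$ uniformly bounded in $m$, it follows that $\limsup_{m\to\infty}\rho_m(x,t)\le 1$. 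The only non-routine point is justifying the PDE--PDE comparison in the form stated; once this is in hand, the remainder is elementary ODE analysis of the source.
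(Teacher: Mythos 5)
Your argument is correct and matches the paper's approach: both compare $\rho_m$ (equivalently $p_m$) against a spatially constant barrier whose time dynamics, amplified by the factor $(m-1)$ in the pressure reaction term, relax to $p_M$ on an $O(1/m)$ time scale. The only cosmetic difference is that the paper works with an explicit piecewise-linear-in-time supersolution $f(x,t)=\max[C-(m-1)C\delta t,\ p_M+\varepsilon]$ for the pressure equation \eqref{eqn:p}, whereas you solve the exact spatially constant ODE in the density variable and then pass to the pressure.
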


\begin{proof}
Fix $\e>0$ and let  $C:=\max [p_m(\cdot,0),p_M+\e]$.  Then $G(p_M+\e) =-\delta <0$ for some $\delta>0$. Thus 
$$f(x,t):= \max[C-(m-1)C\delta t, \quad p_M+\e]$$ is a supersolution of \eqref{eqn:p} and $p_m \leq f$ in $Q_T$, if $p_m \leq f$ on the parabolic boundary of $Q_T$. At $t=0$ this is guaranteed from the definition of $C$, and on the lateral boundary it follows from  \eqref{pmeboundary2}.
Since $f = p_M+\e$ after $t\geq O(\frac{1}{m})$,  we conclude that 
$$
\limsup_{m\to\infty} p_m (x,t)\leq p_M+\e\hbox{ for all }(x,t)\in Q_T.
$$
The claim follows after sending $\e$ to zero. 

\end{proof}

\smallskip
\no We are now ready to show the main convergence result.

\begin{cor}\label{convergence}
Let $u$ and $v$ be as in  \eqref{limsup_liminf}. Then:
\begin{itemize}
\item[(a)] $u=v^*$ is the unique viscosity solution $w$ of \eqref{parabolic_elliptic} with the initial data $u_0$.
\item[(b)] As $m\to\infty$, the $\nu(1-\rho_m)$'s and $\rho_m$'s converge uniformly  respectively to $b(w)$ and $1-\nu^{-1}b(w)$ in $Q_T$.\\
\end{itemize}
\end{cor}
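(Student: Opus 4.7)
The plan is to combine the structural results already established in Sections~\ref{sec:limit}--\ref{sec:general}: Theorem~\ref{sub_super} places the half-relaxed limits $u$ and $v$ into the viscosity framework, Lemma~\ref{initial} pins down their behavior at $t=0$, and the uniqueness theorems of Section~\ref{sec:limit} then collapse them to a single viscosity solution.

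For part (a), Theorem~\ref{sub_super} furnishes that $u\in BUSC(\overline{Q_T})$ is a viscosity subsolution and $v\in BLSC(\overline{Q_T})$ is a viscosity supersolution of \eqref{parabolic_elliptic}. Lemma~\ref{initial} (combined with the density bound of Lemma~\ref{density_ub}) shows that both $u(\cdot,t)$ and $v(\cdot,t)$ converge as $t\to 0^+$ to the datum $u_0=-w\chi_{\Omega_0}+\nu(1-\rho_0)\chi_{\Omega_0^c}$ prescribed in Theorem~\ref{thm:main}. By Theorem~\ref{uniqueness_unbounded} (or Theorem~\ref{unique} in the bounded case) there is a unique viscosity solution $w$ of \eqref{parabolic_elliptic} with this initial datum, and the goal is to identify $u=v^*$ with $w^*$. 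The inequality $v\le u$ is immediate from the definition of the half-relaxed limits. For the reverse, I would regularize: for $\varepsilon>0$, perturb $u$ and $v$ (for instance by shifting the initial data to obtain strict ordering) so that the perturbed functions are strictly separated on the parabolic boundary of an approximating cylinder, apply the comparison principle (Theorem~\ref{thm:cp}), and send $\varepsilon\to 0$. Taking the USC envelope converts the limiting inequality into $u\le v^*$, and since $u$ is USC with $v\le u$ one also has $v^*\le u$; thus $u=v^*$. Uniqueness then identifies $v$ with $w$.

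For part (b), I would start from the algebraic identity $u_m=-\rho_m^m+\nu(1-\rho_m)$, which gives
\[
\nu(1-\rho_m)-b(u_m)=\begin{cases}\rho_m^m &\text{on }\{u_m>0\},\\ \nu(1-\rho_m) & \text{on }\{u_m\le 0\}.\end{cases}
\]
On compact subsets of $\{u>0\}$ the limit density $1-\nu^{-1}b(u)$ is strictly less than $1$, so $\rho_m\le 1-\delta$ uniformly for large $m$ and hence $\rho_m^m\to 0$ uniformly; on compact subsets of $\{u<0\}$ one has $\rho_m\to 1$ uniformly (since $\{u<0\}$ is open and $u_m\to u$ in the half-relaxed sense), so $\nu(1-\rho_m)\to 0$ uniformly. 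In either region $\nu(1-\rho_m)\to b(u)$ uniformly on compacts. The upgrade to uniform convergence on all of $Q_T$ — including the free boundary $\{u=0\}$ — relies on the continuity of $b(u)$ guaranteed by the regular weak solution theory (Theorem~\ref{existence} and Theorem~\ref{uniqueness_unbounded}): a Dini-type argument applied to the continuous functions $\nu(1-\rho_m)$ converging pointwise (and half-relaxedly) to the continuous limit $b(u)$ yields uniform convergence throughout $Q_T$. The claim for $\rho_m=1-\nu^{-1}\nu(1-\rho_m)$ is then immediate.

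The principal obstacle is the free boundary $\partial\{u>0\}$, where $u$ itself can be discontinuous due to pressure nucleation (as discussed right after Theorem~\ref{thm:main} and illustrated in Figure~\ref{fig:HSV}). Working exclusively with $b(u)$ rather than $u$ is essential: although $u$ is only semicontinuous across a nucleation interface, its positive part $b(u)$ remains continuous, and this is precisely the quantity that controls the density convergence and allows the pointwise (half-relaxed) convergence to be promoted to the uniform statement required in the corollary.
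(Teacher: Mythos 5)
Your part (a) is broadly in the spirit of the paper, but it is looser on exactly the point that needs care. The paper does not attempt to compare $u$ and $v$ directly by a perturbation of the half-relaxed limits themselves; instead it brings in the intermediary $w$, the unique viscosity solution from Theorem~\ref{unique}, which is \emph{a priori} known to coincide with the regular weak solution and to have $b(w)$ continuous. One then gets $u\le w$ and $w_*\le v$ from the maximal/minimal characterization built into Theorem~\ref{unique}, and chains $u\le w\le v^*\le u$ to conclude $u=v^*=w$. Your suggestion to ``shift the initial data'' of $u$ and $v$ is unclear as stated: $u=\limsup^*u_m$ and $v=\liminf_*u_m$ are fixed functions, not solutions whose initial data one can freely modify, so the perturbation really has to go through an independently constructed comparison object, which is precisely what $w$ provides.

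Part (b) is where there is a genuine gap. Your region-by-region analysis correctly establishes uniform convergence of $\nu(1-\rho_m)$ to $b(u)$ on compact subsets of $\set{u>0}$ and on compact subsets of $\set{u<0}$, but both of these are \emph{open} sets, so no compact subset of either touches the free boundary $\set{u=0}$. The convergence there is the entire content of the corollary, and the ``Dini-type argument'' you invoke cannot supply it: Dini's theorem requires monotone convergence of continuous functions to a continuous limit, and the $\nu(1-\rho_m)$ are not monotone in $m$; moreover, Dini already presupposes pointwise convergence at every point, including on $\set{u=0}$, which is exactly the step you have not established. The paper instead proves two global inequalities on half-relaxed limits that automatically cover the interface: first, from $u_m = -\rho_m^m+\nu(1-\rho_m)\le \nu(1-\rho_m)$ together with $\liminf_{m\to\infty}\nu(1-\rho_m)\ge 0$ (this is Lemma~\ref{density_ub}), one gets
$$
b(v)=\max\bigl[0,{\liminf}_*u_m\bigr]\le {\liminf}_*\nu(1-\rho_m);
$$
and second, at any point where ${\limsup}^*\nu(1-\rho_m)>0$, along the corresponding subsequence $\rho_m$ is bounded away from $1$, hence $\rho_m^m\to0$ and ${\limsup}^*\nu(1-\rho_m)={\limsup}^*u_m=u\le b(u)$. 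Since $b(u)=b(v)=b(w)$ is continuous, the two inequalities force ${\liminf}_*\nu(1-\rho_m)={\limsup}^*\nu(1-\rho_m)=b(w)$, and coincidence of half-relaxed limits with a continuous function is what upgrades to uniform convergence — no separate boundary argument and no Dini. You should replace the ``Dini-type argument'' with these two one-line inequalities; the inequality $u_m\le\nu(1-\rho_m)$ and the role of Lemma~\ref{density_ub} are the key observations missing from your draft.
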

\begin{proof}
It follows from Theorem~\ref{sub_super} and Lemma~\ref{initial}  that $u$ is a subsolution and $v$ is a supersolution of \eqref{parabolic_elliptic} with initial data $u_0$. 
Let  $w$ be the unique viscosity solution of \eqref{parabolic_elliptic} with initial data $u_0$ given in Theorem~\ref{unique}. Then $u \leq w$ and $w_*\leq v$, and,  hence, 
$$
u\leq w=(w_*)^*\leq v^*.
$$
Since $v^* \leq u$ by definition, we have   $u=v^*=w$. A similar argument leads to  $v=w_*=u_*$. 

\smallskip

\no Since, in view of  Theorem~\ref{unique} we know that $b(w)$ is continuous,   $b(w)=b(w_*)$ and thus \\$b(u)=b(v)=b(w)$.  
\smallskip

\no Now we claim that 
$$
b(v)\leq {\liminf}_* \nu(1-\rho_m) \hbox{ and }{\limsup}^* \nu(1-\rho_m) \leq b(u),
$$
which will yield the uniform convergence of the $\nu(1-\rho_m)$'s. 

\smallskip
\no To prove the first inequality in the claim above, we observe that $u_m \leq \nu(1-\rho_m)$ and, in view of Lemma~\ref{density_ub},  $\limsup_{m\to\infty}\rho_m\leq 1$,  which gives    $\liminf_{m\to\infty} \nu(1-\rho_m) \geq 0$. Then 
$$
b(v) = \max[0, {\liminf}_* u_m] \leq {\liminf}_*\nu(1-\rho_m).
$$

\no To show the second inequality, suppose that, for some  $(x_0,t_0)\in Q_T$, ${\limsup}^*\nu(1-\rho_m)(x_0,t_0)>0$,  since otherwise there is nothing to show. In this case, along a subsequence of $m\to\infty$, we have ${\liminf}_*\rho_m(x_0,t_0) <c_0<1$ and, thus,  ${\liminf}_* p_m(x_0,t_0) = 0$. This yields that 
$$
u(x_0,t_0)={\limsup}^* u_m (x_0,t_0) = {\limsup}^*\nu(1-\rho_m)(x_0,t_0),
$$ and the conclusion follows.
\end{proof}


\no  The uniform convergence of $\rho_m$, or Corollary~\ref{convergence}  seem to be the best one can do without further restriction on the initial data, even for the short time interval. In particular, when $u_0$ is a (strict) supersolution of the elliptic equation in \eqref{parabolic_elliptic}, then the $u_m$'s converge to the solution of \eqref{parabolic_elliptic} with the initial data strictly smaller than $u_0$, that is one should expect,  for sufficiently large $m$, almost discontinuous decrease of the pressure profile at the initial time for $p_m$. 

\section{Finite speed of propagation}
\label{sec:last}
\no We derive some properties of  the speed of propagation for both the negative and positive phases under certain assumptions. We know that the negative phase can nucleate in the interior of the positive phase, when $u$ decreases from a positive value to become zero in finite time.  We show here a quantitative version of this phenomena.  Indeed when $u$ becomes very small in an open neighborhood in the positive phase, then it turns into the negative phase in short amount of time. 

\begin{lem} 
Suppose $u(\cdot,t_0)\leq \e $ in $\bar{B}_r(x_0)$ with $0\leq \e<r^2$. There exists a dimensional constant  $C_n$ such that  
$$
 u(\cdot,t)<0 \hbox{ in } B_{r/4}(x_0)  \ \hbox{ for } \  t \in[ t_0+\frac{C_n}{\nu G(0)} \e, t_0+r^2].
$$

\no In particular, the interior of  $\{u_*(\cdot,t) \leq 0\}$ coincides with $\{u_*(\cdot,t)<0\}$.
\end{lem}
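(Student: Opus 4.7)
The proof will construct a radial supersolution $\phi$ of \eqref{parabolic_elliptic} centered at $x_0$ whose negative (elliptic) phase engulfs $B_{r/4}(x_0)$ by time $t_0 + C_n\e/(\nu G(0))$ and persists there until $t_0 + r^2$, and which dominates $u$ on the parabolic boundary of a suitable space-time cylinder. The comparison principle (Theorem~\ref{thm:cp}) then yields $u \leq \phi$, hence $u < 0$ on the stated set.

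Following the template of Section~\ref{sec:radial}, but with the free boundary now growing outward from $x_0$, I would take $\phi$ radial with interface $|x - x_0| = R(t)$ and $R(t_0) = 0$: set $\phi(\cdot, t) = -w_{R(t)}(|x-x_0|)$ in $B_{R(t)}(x_0)$, where $w_R$ is the radial solution of $-\Delta w = G(w)$ in $B_R$ with $w = 0$ on $\partial B_R$; in the parabolic exterior take $\phi$ decreasing in time under the sink $(\phi-\nu)G(0)<0$, interpolating between values of order $\e$ near the interface and values $\geq \nu$ on the lateral boundary of the cylinder (needed to dominate the a priori bound $u \leq \nu$ from Lemma~\ref{density_ub}). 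The implicit free-boundary relation $R'(t) = -\phi_t/|D\phi^+| \sim \nu G(0)\, r/\e$ (using $\phi_t \sim -\nu G(0)$ and $|D\phi^+| \sim \e/r$ at the interface) gives $R(t) \geq r/4$ at $t - t_0 \sim \e/(\nu G(0))$, matching the statement. The supersolution jump condition $|D\phi^+| \leq |D\phi^-|$ reduces, once $R(t) \geq r/4$, to $\e \lesssim G(0)\, r^2$, which is exactly the hypothesis $\e < r^2$ (absorbing $\nu G(0)$ into $C_n$). The upper time endpoint $t_0 + r^2$ reflects the parabolic time scale required for influence from the lateral boundary to reach $B_{r/4}(x_0)$.

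The final assertion that $\operatorname{int}\{u_* \leq 0\} = \{u_* < 0\}$ is immediate from the main estimate with $\e = 0$: at an interior point $(x_0, t_0)$ of $\{u_* \leq 0\}$, we find $r > 0$ with $u \leq 0$ on $\bar B_r(x_0) \times [t_0 - r, t_0]$ (since $u$ is lower semicontinuous, $u = u_*$), and applying the main estimate at base time $t_0 - r/2$ with $\e = 0$ yields $u(x_0, t_0) < 0$. The chief technical difficulty is the nucleation instant $t = t_0$: as $R(t) \to 0^+$, $|D\phi^-| \to 0$, so the jump condition degenerates. This can be handled either by allowing $\phi$ to be a lower semicontinuous viscosity supersolution across the nucleation time, as permitted by Definition~\ref{de:viscositySubsolution}, or by regularizing with $R(t_0) = \delta > 0$ and parabolic-phase data slightly above $u(\cdot, t_0)$, then passing $\delta \to 0$ via stability of viscosity solutions.
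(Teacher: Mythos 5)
Your approach---a radial supersolution whose elliptic ball $B_{R(t)}(x_0)$ grows outward from $x_0$---is genuinely different from the paper's, which constructs no moving interface at all. The paper first notes that positive solutions of $w_t-\nu\Delta w=(w-\nu)G(0)$ are supersolutions of \eqref{parabolic_elliptic}, and combines this with the parabolic Harnack inequality to propagate the smallness bound $u\le C\varepsilon$ into the cylinder $B_{r/2}(x_0)\times[t_0,t_0+r^2]$. It then compares $u$ against the \emph{smooth} barrier
$$f(x,t)=C\varepsilon+\tfrac{C\varepsilon}{r^2}|x-x_0|^2-\tfrac12\nu G(0)(t-t_0),$$
which is a supersolution in each phase, changes sign smoothly (so $|Df^+|=|Df^-|$ on $\{f=0\}$), and therefore requires no free-boundary matching. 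The Harnack step plus the smooth barrier are precisely what sidestep the nucleation difficulty.

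That difficulty is the gap in your sketch, and neither of your two remedies closes it. The jump condition $|D\phi^+|<|D\phi^-|$ for your barrier reads $\varepsilon/r\lesssim G(0)\,R(t)$, which \emph{fails} for $R(t)\lesssim\varepsilon/(G(0)r)$---exactly the range you must pass through if $R(t_0)=0$. Remedy (b), starting with $R(t_0)=\delta>0$, forces $\phi(\cdot,t_0)<0$ on $B_\delta(x_0)$ while all you know at $t_0$ is $u\le\varepsilon$; for $\varepsilon>0$ the ordering $u\le\phi$ is already violated on the initial time slice, and sending $\delta\to 0$ cannot fix a comparison that is wrong at the outset. Remedy (a), a lower semicontinuous supersolution nucleating at some $t_1>t_0$, presupposes a purely parabolic supersolution that has been driven down to $0$ at the nucleation point by time $t_1$; producing that supersolution \emph{is} the missing Harnack-plus-quadratic-barrier step, not a way around it. A further, unaddressed point: to dominate $u\le\nu$ on the lateral boundary $|x-x_0|=r$ while keeping $|D\phi^+|\sim\varepsilon/r$ at the interface, your parabolic phase must stay of order $\varepsilon$ near $B_{R(t)}$ over the whole interval $[t_0,t_0+r^2]$---again a statement that needs the Harnack propagation to justify. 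Your derivation of the ``in particular'' assertion from the $\varepsilon=0$ case is correct once the main estimate is in place.
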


\begin{proof}

Note that any solution to  the parabolic equation 
$$
w_t -\nu \Delta w = (w-\nu)G(0) \hbox{ in } Q_T,
$$
which stays positive, serves as a supersolution to  \eqref{parabolic_elliptic}. This fact and the parabolic Harnack inequality yield 
\begin{equation}\label{harnack}
u \leq C\e \hbox{ for a dimensional constant } C>0 \hbox{ in } \Sigma:= B_{r/2}(x_0)\times [t_0,t_0+r^2].
\end{equation} 
\smallskip
\no Let $f(x,t) := C\e + \frac{C\e}{r^2}|x-x_0|^2 - \frac{1}{2}\nu G(0)(t-t_0)$. It is immediate that  $f$  is a supersolution to  \eqref{parabolic_elliptic} and thus $u\leq f$. In particular, 
$$
u<0 \hbox{ in } B_{r/4}(x_0)\times [t_1,t_0+r^2]  \  \hbox{ where } \ t_1:= t_0+ \frac{4C\e}{\nu G(0)}. 
$$
\end{proof}

\no The following two local estimates quantify the finite propagation property.  First we show that the negative phase (the tumor) does not shrink too fast over time. Note that it can shrink at least temporarily, when the positive part of $u$ has steep growth near the interface.  In terms of the original density variable $\rho$ this means that its  is close to zero near the tumor boundary.

\begin{lem}\label{finite_shrink}[Finite-speed Shrinkage]
Suppose $u\leq 1$ in $Q_T$. The negative phase $\{u(\cdot ,t)<0\}$ shrinks at most by $t^{2/5}$ over a time period $t$. More precisely,  there exits $c_0>0$ depending only on $n$, such that, whenever  $B_r(x_0) \subset\{u(\cdot,t_0)<0\}$ with $0<r<c_0$ and  $t_0\geq 0$, then
$$
B_{r/2}(x_0)\subset\{u(\cdot, t)<0\} \ \hbox{ for } \  t_0\leq t\leq t_0+ r^{5/2}.
$$
\end{lem}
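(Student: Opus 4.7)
The strategy is to construct a radial classical supersolution $\phi$ of \eqref{parabolic_elliptic} whose negative phase is the shrinking ball $B_{R(t)}(x_0)$ with $R(t_0) = r$ and $R(t) \geq r/2$ for $t \in [t_0, t_0 + r^{5/2}]$, then apply the comparison principle (Theorem~\ref{thm:cp}) on an appropriate cylinder. Since $\phi < 0$ inside $B_{R(t)}$, this immediately yields $u(\cdot, t) < 0$ on $B_{r/2}(x_0)$ for the required time range.

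\textbf{Initial estimate.} After translating so that $x_0 = 0$, $t_0 = 0$, upper semicontinuity of $u$ combined with $B_r \subset \{u(\cdot,0) < 0\}$ gives $u(\cdot, 0) \leq 0$ on $\bar B_r$. Letting $P_R$ denote the radial Dirichlet solution of $-\Delta P = G(P)$ on $B_R$ with $P|_{\partial B_R} = 0$, the function $-u$ is a subsolution of $-\Delta p = G(p)$ on $\{u < 0\}$, so the elliptic maximum principle (available because $G' < 0$) yields $u(\cdot, 0) \leq -P_r(|\cdot|)$ in $B_r$. The divergence theorem applied to $-\Delta P_R = G(P_R)$ gives the gradient estimate $|\partial_\rho P_R(R)| = R\, G(0)/n + O(R^3)$, which is of order $r$ when $r$ is small.

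\textbf{Barrier.} Set $R(t) := r - c_1 t^{2/5}$ with $c_1 = c_1(n) \in (0, 1/2)$ dimensional, so that $R(t) \geq r/2$ on $[0, r^{5/2}]$, and define
\[
\phi(x, t) := \begin{cases} -P_{R(t)}(|x|), & |x| \leq R(t), \\ \sigma(|x|, t), & R(t) \leq |x| \leq r_\star, \end{cases}
\]
where $r_\star > r$ is fixed and $\sigma$ is a smooth radial supersolution of $\sigma_t - \nu \Delta \sigma \geq (\sigma - \nu) G(0)$ in the annulus $\{R(t) \leq |x| \leq r_\star\}$, with $\sigma(R(t), t) = 0$ and $\sigma(r_\star, t) \geq 1$. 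A suitable $\sigma$ can be obtained from a capped quadratic or shifted-exponential profile in $|x| - R(t)$: the negative velocity $R'(t) = -\tfrac{2c_1}{5}\, t^{-3/5}$ contributes a positive term $-\sigma_\rho R'(t)$ to $\sigma_t$ that offsets the sink $-\nu G(0)$. One then checks that (a) the parabolic inequality holds throughout the annulus; (b) $\sigma(r_\star, t) \geq 1$ for $t \in [0, r^{5/2}]$; and (c) the inner slope satisfies $|\partial_\rho \sigma(R(t), t)| < R(t) G(0)/n$. Conditions (a)--(c) guarantee that $\phi$ is a classical supersolution of \eqref{parabolic_elliptic}: the elliptic PDE holds with equality in the negative phase, the parabolic inequality in the annulus by (a), and the free-boundary gradient condition $|D\phi^+| < |D\phi^-|$ on $\partial B_{R(t)}$ is exactly (c).

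\textbf{Comparison and main obstacle.} At $t = 0$, $\phi(\cdot, 0) = -P_r \geq u(\cdot, 0)$ in $B_r$ by the initial estimate, and $\phi(\cdot, 0) = \sigma(\cdot, 0) \geq 0 \geq u(\cdot, 0)$ for $|x| \in [r, r_\star]$ (using $u \leq 0$ on $\bar B_r$); laterally, $\sigma(r_\star, t) \geq 1 \geq u$ by the standing hypothesis $u \leq 1$. A small strict-separation shift and Theorem~\ref{thm:cp} then give $u \leq \phi$ in the cylinder, and the lemma follows from $\phi < 0$ on $B_{R(t)} \supset B_{r/2}$. The main technical obstacle is the simultaneous calibration of $\sigma$ required by (a)--(c): the inner slope must stay of order $r$ (small) to beat the elliptic gradient, yet $\sigma$ must rise to $\geq 1$ at $\rho = r_\star$ within the time window $[0, r^{5/2}]$ while remaining a parabolic supersolution. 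It is this triple constraint that fixes the exponent $2/5$ in the shrinkage rate and forces the smallness assumption $r < c_0$.
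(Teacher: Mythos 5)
The paper's proof uses a \emph{static} barrier whose free boundary sits at the fixed radius $r/2$: the elliptic piece is $h$ solving $-\Delta h = G(h)$ in $B_{r/2}(x_0)$ with $h=0$ on $\partial B_{r/2}(x_0)$ (giving $|Dh| \geq Cr$ there), and the parabolic piece $\psi$ solves $\psi_t - \nu\Delta\psi = \psi G(0)$ in $(\R^n\setminus B_{r/2}(x_0))\times[t_0,t_0+r^{5/2}]$ with $\psi=0$ on $\partial B_{r/2}(x_0)$ and initial data $\chi_{\{|x-x_0|>r\}}$. The whole point is the \emph{buffer annulus} $r/2<|x-x_0|<r$: there both $\psi(\cdot,0)=0$ and $u_+(\cdot,0)=0$ (since $u\leq 0$ on $\bar B_r$), so the initial ordering is trivial, and the heat-kernel estimate $|D\psi|(\cdot,t)\leq C r^{-1}e^{-r^2/t}\leq r^{-1}e^{-1/\sqrt r}$ on $\partial B_{r/2}$ for $t\leq r^{5/2}$ beats $|Dh|\geq Cr$ once $r<c_0$. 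The exponent $5/2$ falls out of the requirement $r^2/t\geq r^{-1/2}$.

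Your approach is genuinely different --- a moving free boundary $R(t)=r-c_1t^{2/5}$ shrinking from $r$ to $r/2$ --- but it has a concrete gap at the very first step. You claim
\[
\phi(\cdot,0)=\sigma(\cdot,0)\geq 0\geq u(\cdot,0)\quad\text{for }|x-x_0|\in[r,r_\star]\quad\text{``using $u\leq 0$ on $\bar B_r$''},
\]
but $u\leq 0$ on $\bar B_r$ says nothing about $|x-x_0|\in(r,r_\star]$; there you only have $u\leq 1$, and $u$ (being merely lower semicontinuous) may jump to values near $1$ arbitrarily close to $\partial B_r$. Since your $\sigma$ satisfies $\sigma(R(0),0)=\sigma(r,0)=0$ and is continuous, $\sigma(\cdot,0)$ is small just outside $B_r$, so the required ordering $\sigma(\cdot,0)\geq u_+(\cdot,0)$ fails precisely where you need it. The paper sidesteps this by separating the barrier's free boundary ($r/2$) from the ball $B_r$ where you know $u\leq 0$, so the parabolic piece can be set to $\chi_{\{|x-x_0|>r\}}$ and ordering is automatic. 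A second, softer issue: your ``main obstacle'' paragraph correctly identifies the triple constraint on $\sigma$ (inner slope $O(r)$, rise to $\geq 1$ at $r_\star$, supersolution inequality with a singular transport term $-\sigma_\rho R'(t)\sim t^{-3/5}$ near $t=0$), but you do not exhibit a profile meeting all three, whereas the paper's verification reduces to a single heat-kernel gradient bound. If you want to keep a moving boundary, start it strictly inside $B_r$, say at $R(0)=3r/4$, and take the parabolic initial profile to vanish on $R(0)<|x-x_0|<r$; that restores the buffer and makes your approach repairable, though still heavier than the paper's fixed-boundary argument.
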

\begin{proof}

Let  $h$ solve  $$-\Delta h= G(h) \ \text{ in} \  B_{r/2}(x_0) \  \text{ with } \ h=0 \ \text{on}  \  \partial B_{r/2}(x_0).$$ Then, since $G(0)>0$, there exists $C>0$ such that 
\begin{equation}\label{1}
|D h| \geq Cr \ \hbox{ on } \ \partial B_{r/2}(x_0). 
\end{equation} 
\no Let  $\psi$  be the solution to 
$$
\left\{\begin{array}{lll}
\psi_t-\nu\Delta\psi = \psi G(0) &\hbox{ in }& \Sigma:=(\R^n-B_{r/2}(x_0))\times [t_0,t_0+r^{5/2}],\\ 
\psi =0 &\hbox{ on }& \partial B_{r/2}(x_0)\times [t_0,t_0+ r^{5/2}],
\\
\psi =  \chi_{\{|\cdot -x_0|>r\}}&\text{ in}& (\R^n\setminus B_{r/2}(x_0)) \times \{0\}.
\end{array}\right.
$$
Using the properties of the heat kernel and  scaling arguments we find, for some dimensional  $C>0$,  
\begin{equation}\label{2}
|D\psi|(\cdot,t) \leq C\frac{1}{r} e^{-r^2/t} \hbox{ in }\partial B_{r/2}(x_0),
\end{equation}
and  it follows that 
$$|D\psi| \leq \frac{1}{r} e^{-1/\sqrt{r}} \ \text{ on the lateral boundary of $\Sigma$.}$$  

\no Hence, if $r$ is small, $\phi=\phi_+ -\phi_-$ defined by 
$$
\phi_+(x,t):= \psi(x,t) \hbox{ in } \Sigma, \quad \phi_-(x,t) := h(x) \hbox{ in } B_{r/2}(x)\times [t_0,t_0+r^{5/2}],
$$
is a supersolution of \eqref{parabolic_elliptic} in $\R^n\times [t_0,t_0+r^{5/2}]$ and the conclusion follows by comparing $u$ with $\phi$.

\end{proof}

\no Due to the possible nucleation of the negative phase,  it is not feasible  to  estimate the speed of expansion of the negative phase without knowing the positive density distribution near a given point. The following lemma states that, if $(x_0,t_0)$ is on the interface and  $u_+$ is nondegenerate near it, then the negative phase cannot expand too fast. 
 
\begin{lem}\label{expansion_finite}[Expansion bound]
Suppose $u(\cdot,t_0)>0$ in $B_{3r}(x_0)$ with $u(\cdot,t_0)>r$ in $B_r(x_0)$. Then,  for $r\leq r_0$ where $r_0$ depends only on $n$,  we have $u>0$ in $B_{r/2}(x_0)\times [t_0,t_0 + r^3]$.

\end{lem}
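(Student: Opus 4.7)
Mirroring Lemma~\ref{finite_shrink} with the parabolic and elliptic phases interchanged, I would construct a strict classical radial subsolution $\phi$ of \eqref{parabolic_elliptic} on a parabolic cylinder $\Sigma=B_R(x_0)\times[t_0,t_0+r^3]$ such that its positive phase contains $B_{r/2}(x_0)$ throughout $[t_0,t_0+r^3]$ and $\phi<u$ on $\partial_P\Sigma$. Theorem~\ref{thm:cp} would then force $u>\phi>0$ on $B_{r/2}(x_0)\times[t_0,t_0+r^3]$.

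\textbf{The barrier.} Take $\phi=\phi_+-\phi_-$ radial around $x_0$, with positive phase $B_{\rho(t)}(x_0)$ and $\rho(t_0)=r$. In this phase, set
\begin{equation*}
\phi_+(x,t) = r^{-1}\bigl(r^2 - |x-x_0|^2\bigr) - c(t-t_0)
\end{equation*}
for $c$ of order $n\nu/r$. The downward time drift $\phi_{+,t}=-c$ dominates the curvature term $-\nu\Delta\phi_+=2n\nu/r$, giving the strict parabolic subsolution inequality $\phi_{+,t}-\nu\Delta\phi_+<(\phi_+-\nu)G(0)$ in $\{\phi>0\}$, and the positivity $\phi_+\geq 3r/4-cr^3>0$ on $B_{r/2}(x_0)\times[t_0,t_0+r^3]$ holds for $r$ small because $cr^3\sim n\nu r^2$ is dominated by $3r/4$. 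The initial bound $\phi_+(\cdot,t_0)\leq r<u(\cdot,t_0)$ on $B_r(x_0)$ is furnished directly by the hypothesis $u(\cdot,t_0)>r$. Outside the positive phase I take $\phi_-$ to be the radial solution of $-\Delta\phi_-=G(\phi_-)$ on the shell $\rho(t)<|x-x_0|<R$, with zero inner value and outer value slightly larger than $p_M$, so that Lemma~\ref{density_ub} (which provides the uniform lower bound $u\geq -p_M$) implies $\phi<u$ on the lateral part of $\partial_P\Sigma$. Choosing $R$ as a large enough $O(1)$ constant makes the inner gradient $|D\phi_-|\sim p_M/R$ strictly smaller than $|D\phi_+|\sim 2$, securing the strict free-boundary subsolution condition $|D\phi^+|>|D\phi^-|$ on $\{\phi=0\}$.

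\textbf{Main obstacle.} The subtle check is the initial comparison $\phi(\cdot,t_0)<u(\cdot,t_0)$ on the outer annulus $B_R(x_0)\setminus B_{3r}(x_0)$, where the hypothesis of the lemma supplies no information on $u$ beyond $u\geq -p_M$. This forces the elliptic profile $\phi_-$ to already exceed $p_M$ at $|x-x_0|=3r$ while simultaneously keeping $|D\phi_-|$ at the inner sphere strictly smaller than $|D\phi_+|$; these two requirements pull in opposite directions in the naive radial scaling, and reconciling them is precisely what produces the dimensional smallness threshold $r\leq r_0(n)$ appearing in the statement. Once this geometric balancing is carried out, Theorem~\ref{thm:cp} yields $\phi\leq u$ on $\Sigma$, completing the argument.
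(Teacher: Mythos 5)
There is a genuine gap, and the claim in your last paragraph that the tension is ``precisely what produces the dimensional smallness threshold $r\leq r_0(n)$'' is, in fact, backwards: making $r$ smaller makes the mismatch worse, not better. The issue is the scaling of $|D\phi_-|$ at the inner sphere. You assert $|D\phi_-|\sim p_M/R$, but a radial solution of $-\Delta\phi_-=G(\phi_-)$ on a shell of inner radius $\rho\approx r$ and outer radius $R=O(1)$, with $\phi_-(\rho)=0$ and $\phi_-(R)\approx p_M$, concentrates essentially all of its variation near the inner sphere when $n\geq 2$ (compare with the harmonic profile $a+b|x|^{2-n}$, or $a+b\log|x|$ for $n=2$); this gives $|D\phi_-|(\rho)\gtrsim p_M/\rho\sim p_M/r$. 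Your parabolic gradient at the interface is $|D\phi_+|\approx 2\rho/r\approx 2$, bounded independently of $r$, so the subsolution condition $|D\phi^+|>|D\phi^-|$ on $\{\phi=0\}$ fails for all small $r$. The deeper point is that you cannot feed in only the crude bound $u\geq -p_M$ at the lateral boundary: that bound is $O(1)$ while the inner radius is $O(r)$, so any admissible elliptic profile bridging the two has a necessarily large inner gradient.

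The paper's proof removes exactly this obstruction with a preliminary step you do not have: exploiting that $u(\cdot,t_0)>0$ on all of $B_{3r}(x_0)$, it compares $u$ against a (negative) heat-kernel barrier spreading inward from $\{|x-x_0|>3r\}$ to conclude that $u^-(\cdot,t)\leq C_0 r$ on $\partial B_{2r}(x_0)$ for $t\in[t_0,t_0+r^3]$. That reduces the outer Dirichlet value for $\phi_-$ from $p_M$ to $C_0 r$, so the inner gradient of $\phi_-$ is only $O(C_0)$. The paper then takes a \emph{shrinking} interface $r(t)=r-\sqrt{t-t_0}$ and uses the parabolic Harnack inequality to get $|D\phi_+|(r(t),t)\gtrsim r/\sqrt{t-t_0}$, which for $t-t_0\leq r^3$ is $\gtrsim r^{-1/2}\to\infty$; this is what dominates the bounded elliptic gradient. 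Your static quadratic profile, with gradient $\sim 2$, does not produce this blow-up and cannot overcome the elliptic side even after the preliminary reduction. So both ingredients -- the $O(r)$ lateral bound via a first comparison, and the $\sqrt{t-t_0}$-shrinking interface with its Harnack gradient -- are needed and missing from your sketch.
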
 
\begin{proof}
Without loss of generality we set $x_0=0$. Comparing $u$ with a fundamental solution of the heat equation with initial data $ - \chi_{|x|>3r}$ as a subsolution to \eqref{parabolic_elliptic}, we find, for some $C>0$ which again depends only on $n$, 
$$
u^-(\cdot,t) \leq C_0r \hbox{ on } \partial B_{2r}(0) \times [t_0,t_0+r^3].
$$
\no Next we  construct a subsolution to  \eqref{parabolic_elliptic} in $B_{2r} \times [t_0,t_0+r^3]$ by solving, for  $r(t):= r-\sqrt{t-t_0}$, the initial-boundary value problem 
$$
\left\{\begin{array}{lll}
\phi_-(x,t) = C_0 r &\hbox{ on }& \partial B_{2r}(0),\\
-\Delta\phi_-(\cdot,t) = G(\phi) &\hbox{ in }& B_{2r}(0) - B_{r(t)}(0),\\
\partial_t\phi  -\nu\Delta\phi = -\nu G(0)  &\hbox{ in } &\cup_{t_0<t\leq t_0+r^3}(B_{r(t)(0)}\times \{t\}),\\ 
\phi(\cdot,t)=0 &\hbox{ on }& \partial B_{r(t)}(0), \\
\phi_+(\cdot,t_0) = \e\chi_{B_r(0)} &\hbox{ in }& (B_{2r} \setminus B_{r(t)}) \times \{0\}.
\end{array}\right.
$$
\no The parabolic Harnack inequality for the heat equation gives   $|D\phi_+|(\cdot,t) \geq C\frac{r}{\sqrt{t-t_0}}$ on $B_{r(t)}(0)$.  Thus 
$$
\tilde{\phi}(\cdot,t):=(\phi)_+ - (\phi)_-
$$
is a subsolution to  \eqref{parabolic_elliptic} in $B_{2r}(0)\times [t_0, t_0+r^3]$, since, for a dimensional constant $C$, 
$$
|D\phi_-|(r(t),t)\leq C \frac{\phi(2r,t)}{r}  \leq C_2\frac{r}{\sqrt{t-t_0}}\leq |D\phi_+|(r(t),t).
$$
We can now conclude by comparing $u$ with $\tilde{\phi}$ in $B_{2r}(0)\times [t_0,t_0+r^3].$

\end{proof}

\medskip

\appendix

\section{Discussion on the proof of the comparison principle in \cite{KP}.}

\no For fixed $r > 0$, we consider the open balls 
\begin{align*}
B_r(x,t) := \set{(y,s): \abs{x-y}^2 + \abs{t-s}^2 < r^2} \ \text{ and } \ 
B_r^n(x) := \set{y: \abs{x-y} < r},
\end{align*}
the space disk
\begin{align*}
D_r(x) := B_r^n(x) \times \set{0} = \set{(y, 0) : \abs{x-y} < r},
\end{align*}
and the flattened set
\begin{align*}
E_r(x,t)  := \set{(y,s) : \abs{x-y}^3 + \abs{t-s}^2 < r^2}.
\end{align*}
Finally, we define the domain $\Xi_r(x,t) $ that is used in the definition of regularizations of solutions by 
\begin{align*}
\Xi_r (x,t):= D_r (x,t) + E_r (x,t),
\end{align*}
where  $+$ is the Minkowski sum. Note that near its top portion at $t=r$, $\Xi(x,t)$ shrinks its spatial radius with the same rate. Finally, when $(x,t)=(0,0)$ we omit and we simply write $\Xi_r$.

\smallskip

\no We write next  a simple version of  the results stated in Lemma 3.13 and Lemma 3.16 of \cite{KP} . 


\begin{lem}\label{appendix}
(a)~Let $u \geq 0$ solve  $u_t - \nu\Delta u \leq 0$  in a parabolic neighborhood of $\overline \Xi_r$  for some $r\in (0,1]$ and assume that $u = 0$ on $\Xi_r$. There exists $\e > 0$ and 
$g(s):= \frac{M}{\e}((s-r)_+)^2 $, where $M$ is independent of $\e$, such that 
\begin{align*}
0 \leq u(x,t) \leq g(\abs{x}) \ \text{in }  \  \{|x|< r +\e\}\times [0, 0 + r].
\end{align*}
(b)~ Let $v$ satisfy $v_t - \nu\Delta v \geq -1$ in $\overline \Xi_r(\xi,s)$ and $v > 0$ in $\Xi_r(\xi,s)$ for some $(\xi, s) \in \Rn \times \R$. There exists $f \in C([0,r])$, $f(0) = 0$, $f > 0$ on $(0,r]$ and $\frac{f(s)}{s} \to \infty$ as $s \to 0+$ such that
\begin{align*}
v(x, t) \geq f(r - \abs{x}) \quad \text{for } \{x:\abs{x} < r\}\times [s + r/2, s + r].
\end{align*}
\end{lem}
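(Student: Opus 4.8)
The plan is to prove both estimates by comparison with explicit radial barriers, the whole point being the ``cubic'' geometry of $\Xi_r$: since $E_r$, and hence $\Xi_r$, is cut out by $|y|^3+|s|^2<r^2$, the time-$t$ slice of $\Xi_r$ is the ball of radius $\rho(t):=r+(r^2-t^2)^{1/3}$, so its lateral boundary $\{|x|=\rho(t)\}$ contracts to $\{|x|=r\}$ as $t\uparrow r$ with normal speed $|\rho'(t)|\sim(r^2-t^2)^{-2/3}\to\infty$ --- \emph{faster} than parabolic scaling --- and this is precisely what yields the quadratic decay in (a) and the superlinear growth in (b). Since $\rho(t)$ stays bounded away from $r$ except for $t$ near $r$, the only part of $\{|x|<r+\e\}\times[0,r]$ not already contained in $\overline{\Xi_r}$ (where $u\equiv0$) is the thin lens $\Lambda:=\{(x,t):\rho(t)<|x|<r+\e\}$, which lies below $t=r$ and whose slice is empty for $t$ small; and for (b), for $t\in[r/2,r-\tau_0]$ the compact set $\overline{B_r}\times\{t\}$ sits inside the open set $\Xi_r$, so $v\ge c_1>0$ there by continuity and positivity, which already dominates any $f$ bounded by $c_1$.

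For (a), normalize so that $0\le u\le1$ and test $u$, on $\Lambda$, against $w(x,t):=\tfrac{M}{\e}(|x|-\gamma(t))_+^2$ with $\gamma(t):=\rho(t)-c_n\nu\,|\rho'(t)|^{-1}$ (so that $r\le\gamma(t)<\rho(t)$ on $\Lambda$ for $\e$ small). A direct computation gives, for $|x|>\gamma(t)$,
$$w_t-\nu\Delta w=\tfrac{2M}{\e}\Big(|\gamma'(t)|\,(|x|-\gamma(t))-\nu-\tfrac{\nu(n-1)}{|x|}\,(|x|-\gamma(t))\Big),$$
which is $\ge0$ as soon as $|x|\ge\rho(t)$ --- here the largeness of $|\rho'|$ near $t=r$ is essential. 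Thus $w$ is a classical heat supersolution on $\Lambda$, and since $u=0\le w$ on the inner lateral boundary $\{|x|=\rho(t)\}$, the comparison principle (its outer lateral boundary $\{|x|=r+\e\}$ being dealt with, as in \cite{KP}, by the crude bound $u\le1$ together with an induction on scales, using that $w\equiv0$ on the part of $\Lambda$ adjacent to $\{|x|=\rho(t)\}$) gives $u\le w$ on $\Lambda$; combined with $u\equiv0$ on $\{|x|\le\rho(t)\}$ this yields $0\le u(x,t)\le g(|x|):=\tfrac{M}{\e}((|x|-r)_+)^2$ on $\{|x|<r+\e\}\times[0,r]$, with $M$ depending only on $n,\nu,r$.

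For (b), translate $(\xi,s)$ to the origin. For $t$ near $r$ one compares $v$, on a region $\{|x|<\Gamma(t),\ t_1<t\le r\}\subset\Xi_r$ with $t_1<r$ close to $r$ and $r\le\Gamma(t)\le\rho(t)$, $\Gamma(r)=r$, against a radial subsolution of $v_t-\nu\Delta v\ge-1$ of the form $\psi(x,t):=C(\Gamma(t)-|x|)_+^{\alpha}$ with $\alpha\in(0,1)$, modified by a linear cap in the thin layer adjacent to $\{|x|=\Gamma(t)\}$. The drift generated by the fast contraction of $\{|x|=\Gamma(t)\}$ makes $\psi$ a subsolution once $t_1$ is close enough to $r$; and $\psi=0\le v$ on $\{|x|=\Gamma(t)\}$ while $\psi\le c_1\le v$ on the bottom $\{t=t_1\}$ for $C$ small, so the comparison principle gives $v\ge\psi\ge C(r-|x|)^{\alpha}$ on $\{|x|<r\}\times[t_1,r]$. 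Together with $v\ge c_1$ on $\{|x|<r\}\times[r/2,t_1]$ this yields $v(x,t)\ge f(r-|x|)$ with $f(\sigma):=\min(c_1,C\sigma^{\alpha})$, and indeed $f(0)=0$, $f>0$ on $(0,r]$, and $f(\sigma)/\sigma=\min(c_1/\sigma,\,C\sigma^{\alpha-1})\to\infty$ as $\sigma\to0^+$.

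The one genuinely delicate point, common to both parts, is the degeneracy of the power-type barrier in the thin strip adjacent to the moving boundary, where the Laplacian term overwhelms the drift term with the wrong sign. This is controlled by the cubic geometry quantitatively: the strip has width only $\sim|\rho'(t)|^{-1}\sim(r^2-t^2)^{2/3}$, which for $t$ close to $r$ is asymptotically smaller than the gap $\rho(t)-r=(r^2-t^2)^{1/3}$ separating the free boundary from $\{|x|=r\}$. Hence in (a) one may place the zero set of $w$ slightly inside $\{|x|=\rho(t)\}$ so that the bad strip is swallowed by $\overline{\Xi_r}$ (where $u\equiv0$ and there is nothing to check), and in (b) the bad strip is cut off by the linear cap without ever reaching $\{|x|\le r\}$. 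Verifying these modifications, and fixing the admissible ranges of $\e$, $\alpha$ and $t_1$, is the bulk of the work; beyond it the argument is just the parabolic comparison principle.
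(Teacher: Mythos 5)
First, a point of reference: the paper does not prove this lemma at all --- it is quoted as ``a simple version of Lemma 3.13 and Lemma 3.16 of \cite{KP}'' --- so you are supplying an argument where the paper defers to the literature. Your reading of the geometry is exactly right: the time-$t$ slice of $\Xi_r$ is $B_{\rho(t)}$ with $\rho(t)=r+(r^2-t^2)^{1/3}$, and the super-parabolic contraction $|\rho'(t)|\sim (r^2-t^2)^{-2/3}$ is what drives both estimates; your interior computation for the supersolution $w$ in part (a) is also correct. But the two steps you defer are precisely where the content of the lemma lives, and as written they are gaps. In part (a), the comparison on $\Lambda$ cannot be closed on the outer lateral boundary $\{|x|=r+\e\}$: since you arrange $\gamma(t)\geq r$, one has $w=\tfrac{M}{\e}(r+\e-\gamma(t))^2\leq M\e$ there, while the only available bound on $u$ is $\sup u$, so the hypothesis of the comparison principle fails for small $\e$ and any $M$ independent of $\e$. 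The ``induction on scales'' is not routine either: at scale $\delta$ the region $\{\rho(t)<|x|<r+\delta\}$ has a bottom corner $t=t_\delta$ (where $\rho(t_\delta)=r+\delta$) at which the rescaled barrier vanishes on the outer boundary while the bound on $u$ there is exactly what one is trying to prove, so the naive iteration is circular. A cleaner route avoids iteration entirely: for $|x_0|=r+\delta$ take $s_1$ with $\rho(s_1)=r+2\delta$; then $u(\cdot,s_1)\equiv 0$ on $B_{r+2\delta}$, the remaining time is $r-s_1=O(\delta^3/r)$, and comparison with the caloric extension of $(\sup u)\chi_{\{|y|\geq r+2\delta\}}$ gives $u(x_0,t)\leq C(\sup u)\,e^{-cr/(\nu\delta)}$, which is far stronger than quadratic and uniform in $\e$.

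In part (b) there are two concrete problems. The radial profile $\psi=C(\Gamma(t)-|x|)_+^{\alpha}$ has a concave conical singularity at the center: near $x=0$ one has $\psi\approx C\Gamma^\alpha-C\alpha\Gamma^{\alpha-1}|x|$, so $\Delta\psi\to-\infty$ for $n\geq 2$ and $\psi_t-\nu\Delta\psi\leq -1$ fails there (also in the viscosity sense: $C\Gamma(t)^\alpha-K|x|^2/2$ touches $\psi$ from above at the origin and violates the inequality for large $K$). One cannot simply retreat to an annulus $\{r/2<|x|<\Gamma(t)\}$, because the inner boundary data needed up to $t=s+r$ is not furnished by the hypothesis: $v>0$ is assumed only on the open set $\Xi_r(\xi,s)$, whose slice at $t=s+r$ is empty, so positivity of $v$ on $\overline{B_{r/2}}\times[s+r/2,s+r]$ must first be produced by a separate comparison (for instance, $v+(t-t_1)$ is supercaloric, dominates the caloric function in the fixed cylinder $B_{3r/4}\times(t_1,s+r]$ with zero lateral data and initial data $c_1\chi_{B_{r/2}}$, and interior Harnack then gives $v\geq c_2>0$ there once $s+r-t_1$ is small). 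With that step added and the power barrier either smoothed near the origin or confined to the annulus, your argument for (b) goes through, and the final assembly of $f$ is correct; without it, the claimed bound on the top slice does not follow.
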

\no The first part of the lemma states that,  if the support of $v$ is $\Xi$, then the spatial gradient of $v(\cdot,t)$ grows to infinity on the lateral boundary of $\Xi$ as $t$ increases to $r$. The second part implies  that, if the support of $v$ is outside of $\Xi$, then the spatial gradient of $v(\cdot,t)$ vanishes on the lateral boundary of $\Xi$ as $t$ increases to $r$. 
\smallskip

\no Now we touch on the proof of the comparison principle. Since it  parallels that of Theorem 3.1 in \cite{KP}, we only sketch it pointing out differences.

\no We consider the regularized sub- and supersolutions
$$
Z(x,t):= \sup_{\Xi_r(x,t)} u, \quad W(x,t):= \inf_{\Xi_r(x,t)} v,
$$
\no and argue by contradiction assuming  that there is a finite first crossing time $t_0$ defined by
$$
t_0:= \sup\{\tau : Z(\cdot,t) < W(\cdot,t) \hbox{ for } 0\leq t\leq \tau\}.
$$

\no It follows  that there is a contact point at $t=t_0$ between the free boundary of $Z$ and $W$.

\begin{lem}\label{contact} We have:
$$
\{Z(\cdot,t_0)\geq 0\} \cap \{W(\cdot,t_0)\leq 0\} = \partial\{Z(\cdot,t_0)\geq 0 \} \cap \partial\{W(\cdot,t_0)> 0\}.
$$

\end{lem}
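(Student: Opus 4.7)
The inclusion $\supseteq$ is immediate from continuity of $Z$ and $W$: a point $x \in \partial\{Z(\cdot,t_0) \geq 0\}$ satisfies $Z(x,t_0) = 0$ and hence lies in $\{Z \geq 0\}$, while $x \in \partial\{W(\cdot,t_0) > 0\}$ gives $W(x,t_0) = 0 \leq 0$.

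For the converse, fix $x_0 \in \{Z(\cdot,t_0) \geq 0\} \cap \{W(\cdot,t_0) \leq 0\}$. The first step is to pin down $Z(x_0,t_0) = W(x_0,t_0) = 0$. The strict ordering $Z < W$ on $\R^n \times [0,t_0)$ and joint continuity of $Z$ and $W$ yield $Z(x_0,t_0) \leq W(x_0,t_0)$, and coupling this with $Z(x_0,t_0) \geq 0 \geq W(x_0,t_0)$ forces both values to vanish. What remains is to exhibit sequences converging to $x_0$ at time $t_0$ along which $Z < 0$ and along which $W > 0$, which are precisely the conditions that put $x_0$ on the two boundaries.

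My plan for the latter statements is to exploit the $\Xi_r$-regularization. Since $W(x_0,t_0) = \inf_{\Xi_r(x_0,t_0)} v = 0$ and $v$ is LSC, one has $v \geq 0$ on $\overline{\Xi_r(x_0,t_0)}$ with the infimum attained at some contact point $(z_*,\tau_*)$ on the closure. I would translate the base point to $x_0 + \delta e$ for a suitable unit vector $e$ so that the shifted set $\Xi_r(x_0+\delta e,t_0)$ avoids $(z_*,\tau_*)$: the lateral thickness of $D_r + E_r$ provides enough room for such a shift (indeed, $\Xi_r$ has positive spatial diameter at every time slice). Along this translation, LSC of $v$ together with compactness yields that $v$ stays bounded below by a positive constant on the new $\Xi_r$-set, so $W(x_0 + \delta e, t_0) > 0$ and thus $x_0 \in \partial\{W > 0\}$. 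An analogous argument using the sup-convolution $Z$ and the USC function $u$ produces nearby points where $Z < 0$, placing $x_0 \in \partial\{Z \geq 0\}$.

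The main obstacle is certifying that the translated infimum (respectively, supremum) is strictly positive (respectively, negative), rather than equal to zero once more because $v$ (or $u$) happens to vanish on a whole relative open subset of $\overline{\Xi_r(x_0,t_0)}$. This is where the strict ordering $Z < W$ on $[0,t_0)$ combines with the gradient dichotomy of Lemma~\ref{appendix}: that lemma shows that on the top lateral portion of $\Xi_r$ the spatial gradient of a solution of the parabolic branch blows up when the function vanishes there, while it vanishes when the function is nontrivial off $\Xi_r$. Applied to both $u$ and $v$ on either side of the contact, this dichotomy prevents the simultaneous vanishing of $u$ and $v$ on an open subset of $\overline{\Xi_r(x_0,t_0)}$ that would be needed to obstruct the translation argument. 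This is the formal content of the earlier assertion that \emph{nucleations cannot occur at the contact point of two regularized solutions which are initially strictly ordered}.
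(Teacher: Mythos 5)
Your opening observations are sound: the inclusion $\supseteq$ follows from continuity of the regularizations, and the ordering $Z\leq W$ at $t=t_0$ (forced by the definition of $t_0$ and continuity) combined with $Z(x_0,t_0)\geq 0\geq W(x_0,t_0)$ does pin down $Z(x_0,t_0)=W(x_0,t_0)=0$. Beyond this, however, the argument diverges substantially from the paper's and has a genuine gap that your own proposal flags but does not close.

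The translation argument is the weak point. It requires that one can shift the base point $x_0$ by some small $\delta e$ so that $\Xi_r(x_0+\delta e,t_0)$ misses the set where $v$ vanishes; you acknowledge this fails if $v\equiv 0$ on a relatively open subset of $\overline{\Xi_r(x_0,t_0)}$, and you then appeal to Lemma~\ref{appendix} and the strict ordering $Z<W$ on $[0,t_0)$ to exclude this. But Lemma~\ref{appendix} is a statement about the spatial gradient of a caloric (sub/super-)solution near the \emph{lateral} boundary of a $\Xi_r$-cylinder in whose interior the function is either identically zero (part (a)) or strictly positive (part (b)). It does not forbid $v$ from vanishing on an open subset of the \emph{interior} of $\Xi_r(x_0,t_0)$ — that is precisely the nucleation scenario, and nothing in the dichotomy, nor in the mere fact $Z<W$ at earlier times, rules it out directly. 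The phrase from the Introduction that you cite (``nucleations cannot occur at the contact point'') is a conclusion the paper proves, not a hypothesis one can invoke; the proof of the present lemma is part of how one earns it. There is also a secondary issue: even when the infimum of $v$ on $\overline{\Xi_r(x_0,t_0)}$ is attained only at points away from $\Xi_r(x_0,t_0)$, the set of minimizers need not be a single point, so avoiding ``the'' contact point $(z_*,\tau_*)$ by a single shift is not automatic. A symmetric gap appears in the argument for $\partial\{Z\geq 0\}$.

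The paper's proof takes a different and more economical route, and it is worth internalizing. Rather than trying to perturb $x_0$ to produce a nearby sign change, one shows the inclusion
\[
\mathrm{int}\,\{Z(\cdot,t_0)\geq 0\}\subset\{W(\cdot,t_0)>0\},
\]
from which the stated set identity follows (any $x_0$ in the left-hand set of the lemma is then forced onto $\partial\{Z(\cdot,t_0)\geq 0\}$, and since boundary points of the $\Xi_r$-regularized set $\{Z\geq 0\}$ are limits of its interior points, $x_0$ is also a limit of points in $\{W>0\}$, hence lies on $\partial\{W>0\}$). The inclusion itself is obtained by a backward-in-time argument: the finite-speed estimate of Lemma~\ref{finite_shrink}, applied to the subsolution, rules out a discontinuous expansion of $\{u\geq 0\}$, and together with the sup-convolution structure of $Z$ this yields $Z\geq 0$ on a small cylinder $B_r(x_0)\times[t_0-\tau,t_0]$ when $x_0\in\mathrm{int}\,\{Z(\cdot,t_0)\geq 0\}$. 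On the earlier slices of this cylinder the strict ordering $Z<W$ then gives $W>0$ strictly, and the strong maximum principle for the parabolic branch propagates $W>0$ up to $t=t_0$. Notice this mechanism uses the strict ordering \emph{before} $t_0$ together with parabolic regularity, not a geometric perturbation of $\Xi_r$; that is the idea your proof is missing, and it is the genuine reason nucleation cannot spoil the contact at $(x_0,t_0)$.
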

\begin{proof}
First note that, in view of Lemma~\ref{finite_shrink},  $\{u(\cdot,t_0)\geq 0\}$ can not  expand discontinuously.  The  definition of $Z$ then yields that,  if $x_0$ lies in the interior of $\{Z(\cdot,t_0)\geq 0\}$,  then $Z \geq 0$ in $B_r(x_0)\times [t_0-t,t_0]$ for some sufficiently small $r$ and $t$. It then follows from the definition of $t_0$ and the strong maximum principle for the heat equation  that  $\{W(\cdot,t_0)>0\}$ in the interior of $\{Z(\cdot,t_0)\geq 0\}$ and the claim follows. 
\end{proof}
\no The next lemma corresponds to Lemma 3.19 in \cite{KP}. Once it is shown, the rest of the proof is the same as in \cite{KP}.  To state it and sketch the proof, it is necessary to remind the reader  the geometric properties of $Z$ and $W$ as in \cite{KP}. By definition, for any $y_0\in\partial\{Z(\cdot,t_0)\geq 0\}$ there exists a ``cylinder" $\Xi_Z = \Xi_r(z_1,s_1)$ in $\{Z\geq 0\}$ such that $(y_0,t_0)$ lies on the lateral boundary of  $\Xi_Z$. On the other hand, for any $y_0\partial\{W(\cdot,t_0)<0\}$ there exists an exterior ``cylinder" $\Xi_W = \Xi_r(z_2,s_2)$ in $\{W< 0\}$ such that $(y_0,t_0)$ lies on the lateral boundary of $\Xi_W$. 

\begin{lem}\label{finite_prop}
Fix $y_0\in\partial\{Z(\cdot,t_0)\geq 0\}\cap \partial\{W(\cdot,t_0)\geq 0\}$, and let $\Xi_Z$ and $\Xi_W$ be as given above. Then 
$$
t_0-r<s_1,s_2 <t_0+r.
$$
\end{lem}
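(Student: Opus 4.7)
The plan is to establish the lemma by ruling out each of the four boundary configurations $s_i = t_0 \pm r$. By the symmetry between the setups for $Z$ and $W$ (interchanging the sub- and supersolution roles of $u$ and $v$) and the time symmetry of $\Xi_r$, it suffices to derive a contradiction in the single case $s_1 = t_0 + r$; the other three cases follow by the same argument after the obvious substitutions.

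So assume for contradiction that $s_1 = t_0 + r$. Since $(y_0,t_0)$ lies on the lateral boundary of $\Xi_Z = \Xi_r(z_1,s_1)$ and $|t_0 - s_1| = r$, the geometry of $\Xi_r$ forces $|y_0 - z_1| = r$, with $(y_0,t_0)$ sitting at the bottom rim. A direct computation shows that for any $\delta \in (0,r)$ the time slice of $\Xi_Z$ at $t = t_0 + \delta$ is the open ball $B_{R(\delta)}(z_1)$ with $R(\delta) = r + (r^2 - (r-\delta)^2)^{1/3} = r + (2r\delta - \delta^2)^{1/3}$, which satisfies $R(\delta) = r + (2r)^{1/3}\delta^{1/3}(1+o(1))$ as $\delta \to 0^+$. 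In particular $y_0$ lies strictly in the interior of $B_{R(\delta)}(z_1)$ for every such $\delta$, and the inclusion $\Xi_Z \subset \{Z \geq 0\}$ forces $\{Z(\cdot, t_0+\delta) \geq 0\}$ to contain a spatial ball around $y_0$ of radius at least $c\,\delta^{1/3}$. Combined with $y_0 \in \partial\{Z(\cdot, t_0) \geq 0\}$, this exhibits an expansion of the positive phase past $y_0$ of size at least $c\,\delta^{1/3}$ during the time increment $\delta$.

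To close the contradiction I would transfer this expansion bound to the underlying viscosity solution $u$ via the sup-convolution identity $\{Z \geq 0\} = \{u \geq 0\} + \Xi_r$: the shrinkage of $\{u < 0\}$ at a point of $\partial\{u < 0\}$ near $(z_1, s_1)$ must also be at least $c\,\delta^{1/3}$ in time $\delta$. But Lemma~\ref{finite_shrink} bounds this shrinkage by $C\delta^{2/5}$, and since $1/3 < 2/5$ one has $c\,\delta^{1/3} > C\delta^{2/5}$ for all $\delta > 0$ sufficiently small, a contradiction. The mirror case $s_1 = t_0 - r$ (where $\Xi_Z$ lies in the past of $t_0$) reduces to a contradiction with Lemma~\ref{expansion_finite}, which precludes the positive phase of $u$ from retreating at the super-critical rate $\delta^{1/3}$ through the interior point near $(z_1, s_1)$; the two cases for $s_2$ are identical after exchanging the sub- and supersolution roles of $u$ and $v$.

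The main obstacle will be a careful verification that the finite-propagation bounds from Lemmas~\ref{finite_shrink} and~\ref{expansion_finite}, stated for viscosity solutions $u$ and $v$ of \eqref{parabolic_elliptic}, transfer to the convolutions $Z$ and $W$ up to the fixed translation $\Xi_r$; this is essentially bookkeeping but must be done explicitly to ensure that the leading-order exponent $\delta^{1/3}$ coming from the geometry of $\Xi_r$ genuinely beats the $\delta^{2/5}$ bound coming from the PDE. A secondary issue is that nucleation of the negative phase of $u$ near $(z_1, s_1)$ could in principle sabotage the realizer structure, but Lemma~\ref{contact} already rules out jump discontinuities of the free boundary at the contact point $(y_0, t_0)$, so the transfer of the estimate remains legitimate.
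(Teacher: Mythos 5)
The proposed symmetry reduction does not hold, and the core quantitative step is not sound, so there is a genuine gap. On the symmetry: the four cases $s_i = t_0 \pm r$ are not interchangeable. The PDE is parabolic (no time reversibility), $\Xi_Z$ is an \emph{interior} cylinder for the sup-convolution of the subsolution $u$, while $\Xi_W$ is an \emph{exterior} cylinder for the inf-convolution of the supersolution $v$, and these are not mirror images of each other. In fact the paper handles the two ``lower'' bounds $s_1, s_2 > t_0 - r$ purely via the ordering $Z \le W$ for $t < t_0$ (a consequence of the definition of the first crossing time), with no PDE estimate at all, and proves the two ``upper'' bounds $s_1 < t_0 + r$ and $s_2 < t_0 + r$ by \emph{different} ingredients: the first by the interior-ball property of $\{W(\cdot,t_0) \le 0\}$ at $y_0$ combined with Lemma~\ref{finite_shrink}, the second by the strong maximum principle and Hopf's lemma (using $Z \ge 0$ on $\Xi_Z \cap \{t < t_0\}$, which is non-empty precisely because $s_1 < t_0 + r$ was already established) combined with Lemma~\ref{expansion_finite}. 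The logical ordering of the cases and the use of $Z \le W$ below $t_0$ is essential and cannot be collapsed to a single computation.

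On the main step itself: the $\delta^{1/3}$ expansion of $\{Z \ge 0\}$ you derive is a purely geometric property of the dilation by $\Xi_r$ and does not translate into a rate of shrinkage of $\{u < 0\}$. Indeed, if $(z_1,s_1) \in \{u \ge 0\}$ and $\{u \ge 0\}$ is \emph{stationary} near $(z_1,s_1)$, the set $\{Z \ge 0\} \supset \Xi_r(z_1, s_1)$ already exhibits the same $\delta^{1/3}$ opening near the bottom rim $(y_0, t_0)$ of the cylinder — this is just the shape of $\Xi_r$ near $|t-s_1| = r$. So no quantitative conclusion about the free boundary speed of $u$ can be extracted from it, and the claimed contradiction with the $\delta^{2/5}$ bound of Lemma~\ref{finite_shrink} never materializes. (Separately, Lemma~\ref{finite_shrink} requires a spatial ball inside $\{u(\cdot,t_0) < 0\}$ to begin with, which your argument does not produce.) What the paper actually uses is the interior-ball property of the \emph{complementary} regularized set $\{W(\cdot, t_0) \le 0\}$ at $y_0$ — a static, space-only geometric fact at time $t_0$ — and then applies the PDE lemma to that ball, which is a fundamentally different use of the cylinder geometry than the time-rate computation you propose.
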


\begin{proof}
It follows from the definitions that  $\{W(\cdot,t_0)\leq 0\}$ has an interior ball of radius $r$ which touches $y_0$ on its boundary. This fact and Lemma~\ref{finite_shrink} yield  that $s_1<t_0+r$. Since $Z\leq W$ prior to $t=t_0$, we also have $s_2>t_0-r$. If we show that $s_2 <t_0+r$,  then the ordering between $Z$ and $W$ implies  that $s_1 >t_0-r$ and, hence, the conclusion.

\smallskip 
\no Since $s_1<t_0+r$, $\Sigma:=\Xi_Z \cap\{t<t_0\}$ is nonempty and $Z\geq 0$ in $\Sigma$. Let $B:= \Xi_Z\cap\{t=t_0\}$.  The strong maximum principle as well as  Hopf's lemma imply that  $W(\cdot,t_0)>0 \hbox{ in } B$ with nonzero gradient on the boundary of $B$. It then follows from Lemma~\ref{expansion_finite} that the negative phase of $W$ cannot expand too fast, that is $s_2<t_0+r$.
\end{proof}

\begin{bibdiv}
\begin{biblist}

\bib{AL}{article}{
   author={Alt, Hans Wilhelm},
   author={Luckhaus, Stephan},
   title={Quasilinear elliptic-parabolic differential equations},
   journal={Math. Z.},
   volume={183},
   date={1983},
   number={3},
   pages={311--341},
   issn={0025-5874},
   review={\MR{706391 (85c:35059)}},
   doi={10.1007/BF01176474},
}

\bib {CafSalsa}{book}{
    AUTHOR = {Caffarelli, Luis},
    AUTHOR = {Salsa, Sandro},
     TITLE = {A geometric approach to free boundary problems},
    SERIES = {Graduate Studies in Mathematics},
    VOLUME = {68},
 PUBLISHER = {American Mathematical Society, Providence, RI},
      YEAR = {2005},
     PAGES = {x+270},
      ISBN = {0-8218-3784-2},
       DOI = {10.1090/gsm/068},
       URL = {http://dx.doi.org/10.1090/gsm/068},
}
\bib {CafVaz}{article}{
    AUTHOR = {Caffarelli, Luis},
    AUTHOR = {Vazquez, Juan Luis},
     TITLE = {Viscosity solutions for the porous medium equation},
 BOOKTITLE = {Differential equations: {L}a {P}ietra 1996 ({F}lorence)},
    SERIES = {Proc. Sympos. Pure Math.},
    VOLUME = {65},
     PAGES = {13--26},
 PUBLISHER = {Amer. Math. Soc., Providence, RI},
      YEAR = {1999},
       DOI = {10.1090/pspum/065/1662747},
       URL = {http://dx.doi.org/10.1090/pspum/065/1662747},
}

\bib{CAR}{article}{
   author={Carrillo, Jos\'e},
   title={Entropy solutions for nonlinear degenerate problems},
   journal={Arch. Ration. Mech. Anal.},
   volume={147},
   date={1999},
   pages={269--361},
}

\bib{DG}{article} {
  title={Local behavior of solutions of an elliptic-parabolic equation},
  author={DiBenedetto, Emmanuele},
  author={Gariepy, Ronald},
  journal={Archive for Rational Mechanics and Analysis},
  volume={97},
  number={1},
  pages={1--17},
  year={1987},
  publisher={Springer}
}

\bib{Friedman_survey}{article}{
    AUTHOR = {Friedman, Avner},
     TITLE = {A hierarchy of cancer models and their mathematical
              challenges},
      NOTE = {Mathematical models in cancer (Nashville, TN, 2002)},
   JOURNAL = {Discrete Contin. Dyn. Syst. Ser. B},
  FJOURNAL = {Discrete and Continuous Dynamical Systems. Series B. A Journal
              Bridging Mathematics and Sciences},
    VOLUME = {4},
      YEAR = {2004},
    NUMBER = {1},
     PAGES = {147--159},
      ISSN = {1531-3492},
          DOI = {10.3934/dcdsb.2004.4.147},
       URL = {http://dx.doi.org/10.3934/dcdsb.2004.4.147},
}

\bib {Kim2003}{article}{
    AUTHOR = {Kim, Inwon C.},
     TITLE = {Uniqueness and existence results on the {H}ele-{S}haw and the
              {S}tefan problems},
   JOURNAL = {Arch. Ration. Mech. Anal.},
  FJOURNAL = {Archive for Rational Mechanics and Analysis},
    VOLUME = {168},
      YEAR = {2003},
    NUMBER = {4},
     PAGES = {299--328},
      ISSN = {0003-9527},
       DOI = {10.1007/s00205-003-0251-z},
       URL = {http://dx.doi.org/10.1007/s00205-003-0251-z},
}

\bib{KP}{article}{
   author={Kim, Inwon C.},
   author={Po{\v{z}}{\'a}r, Norbert},
   title={Nonlinear elliptic-parabolic problems},
   journal={Arch. Ration. Mech. Anal.},
   volume={210},
   date={2013},
   number={3},
   pages={975--1020},
   issn={0003-9527},
   doi={10.1007/s00205-013-0663-3},
}

\bib{lowengrub_survey}{article}{
    AUTHOR = {Lowengrub, J. S.},
    AUTHOR = { Frieboes, H. B.},
    AUTHOR = { Jin, F.},
    AUTHOR = {Chuang, Y.-L.},
    AUTHOR = { Li, X },
    AUTHOR = {Macklin, P. },
    AUTHOR = {Wise, S. M.},
    AUTHOR = {Cristini, V.},
     TITLE = {Nonlinear modelling of cancer: bridging the gap between cells
              and tumours},
   JOURNAL = {Nonlinearity}, 
  FJOURNAL = {Nonlinearity},
    VOLUME = {23},
      YEAR = {2010},
    NUMBER = {1},
     PAGES = {R1--R91},
      ISSN = {0951-7715},
     CODEN = {NONLE5},
       DOI = {10.1088/0951-7715/23/1/001},
       URL = {http://dx.doi.org/10.1088/0951-7715/23/1/001},
}

\bib{PQT}{article}{
   author={Perthame, Beno{\^{\i}}t},
   author={Tang, Min},
   author={Vauchelet, Nicolas},
   title={Traveling wave solution of the Hele-Shaw model of tumor growth with nutrient},
   eprint={http://arxiv.org/abs/1401.3649}
}

\bib{PQTV}{article}{
   author={Perthame, Beno{\^{\i}}t},
   author={Quir{\'o}s, Fernando},
   author={Tang, Min},
   author={Vauchelet, Nicolas},
   title={Derivation of a Hele-Shaw type system from a cell model with active motion},
   journal={Interfaces and Free Boundaries},
   volume={16},
   date={2015},
   pages={489--508},
   eprint={http://arxiv.org/abs/1401.2816},
   doi={0.4171/IFB/327},
}

\bib{PQV}{article}{
   author={Perthame, Beno{\^{\i}}t},
   author={Quir{\'o}s, Fernando},
   author={V{\'a}zquez, Juan Luis},
   title={The Hele-Shaw asymptotics for mechanical models of tumor growth},
   journal={Arch. Ration. Mech. Anal.},
   volume={212},
   date={2014},
   number={1},
   pages={93--127},
   issn={0003-9527},
   doi={10.1007/s00205-013-0704-y},
}

\bib{vazquez_book}{book}{
    AUTHOR = {V{\'a}zquez, Juan Luis},
     TITLE = {The porous medium equation},
    SERIES = {Oxford Mathematical Monographs},
      NOTE = {Mathematical theory},
 PUBLISHER = {The Clarendon Press, Oxford University Press, Oxford},
      YEAR = {2007},
     PAGES = {xxii+624},
      ISBN = {978-0-19-856903-9; 0-19-856903-3},
}
\end{biblist}
\end{bibdiv}

\end{document}